\theoremstyle{definition}
\newtheorem{definition}{Definition}[section]
\newtheorem{theorem}[definition]{Theorem}
\newtheorem{lemma}[definition]{Lemma}
\newtheorem{example}[definition]{Example}
\newtheorem{proposition}[definition]{Proposition}
\newtheorem{corollary}[definition]{Corollary}
\newtheorem*{corollary*}{Corollary}
\newtheorem{principle}{Principle}
\newtheorem{conjecture}[definition]{Conjecture}
\begin{document}

\def\spanset{\operatorname {span}}
\def\col{\operatorname {col}}
\def\row{\operatorname {row}}
\def\design{\operatorname {design}}
\def\dist{\operatorname {dist}}
\def\eff{\operatorname {efficacy}}
\def\a{\alpha}
\def\b{\beta}
\def\c{\gamma}
\def\d{\delta}
\def\e{\varepsilon}
\def\g{\gamma}
\def\l{\lambda}
\def\L{\Lambda}
\def\o{\omega}
\def\s{\sigma}
\def\t{\tau}
\renewcommand{\phi}{\varphi}
\def\vn{\varnothing}
\def\ones{\mathbbm{1}}
\def\zeros{\mathbf{0} }
\def\cR{{\mathcal R}}
\def\RR{{\mathbb R}}
\def\SS{{\mathbb S}}
\def\CC{{\mathbb C}}
\def\cO{{\mathcal O}}
\def\cA{{\mathcal A}}
\colorlet{cyan}{black}
\title{Codes, Cubes, and Graphical Designs}
\keywords{Graphical Designs, Error Correcting Codes, Hamming Code, Hoffman Bound, Cheeger Bound, Graph Laplacian, Graph Sampling, Stable Sets, Association Schemes, t-Designs, Spherical Designs, Sobolev-Lebedev Quadrature.}
\subjclass[2010]{05C50, 05E30, 68R10} 
\author[]{\vspace{-.1 in}Catherine Babecki}
\thanks{Research partially supported by the U.S. National Science Foundation grant 
DMS-1719538}
\address{Department of Mathematics, University of Washington, Seattle, WA 98105 USA}
\email{cbabecki@uw.edu}

\begin{abstract}
\vspace{-.1 in}
Graphical designs are an extension of spherical designs to functions on graphs. We connect linear codes to graphical designs on cube graphs, and show that the Hamming code in particular is a highly effective graphical design. We show that even in highly structured graphs, graphical designs are distinct from the related concepts of extremal designs, maximum stable sets in distance graphs, and $t$-designs on association schemes.
\end{abstract}
\maketitle

\section{Introduction to Graphical Designs}
In this paper, we  establish several new results about graphical designs, which are an analog of the well-known spherical designs to the discrete domain of graphs. We show that linear codes are effective graphical designs on the cube graph, which is the graph that has the vertices and edges of the unit cube. We then show that graphical designs are not the same as several related combinatorial concepts.  We begin this section with the history and motivation for defining graphical designs, and then introduce the graphical design problem. 

\begin{figure}[h!]
    \centering
    \includegraphics[scale =.2]{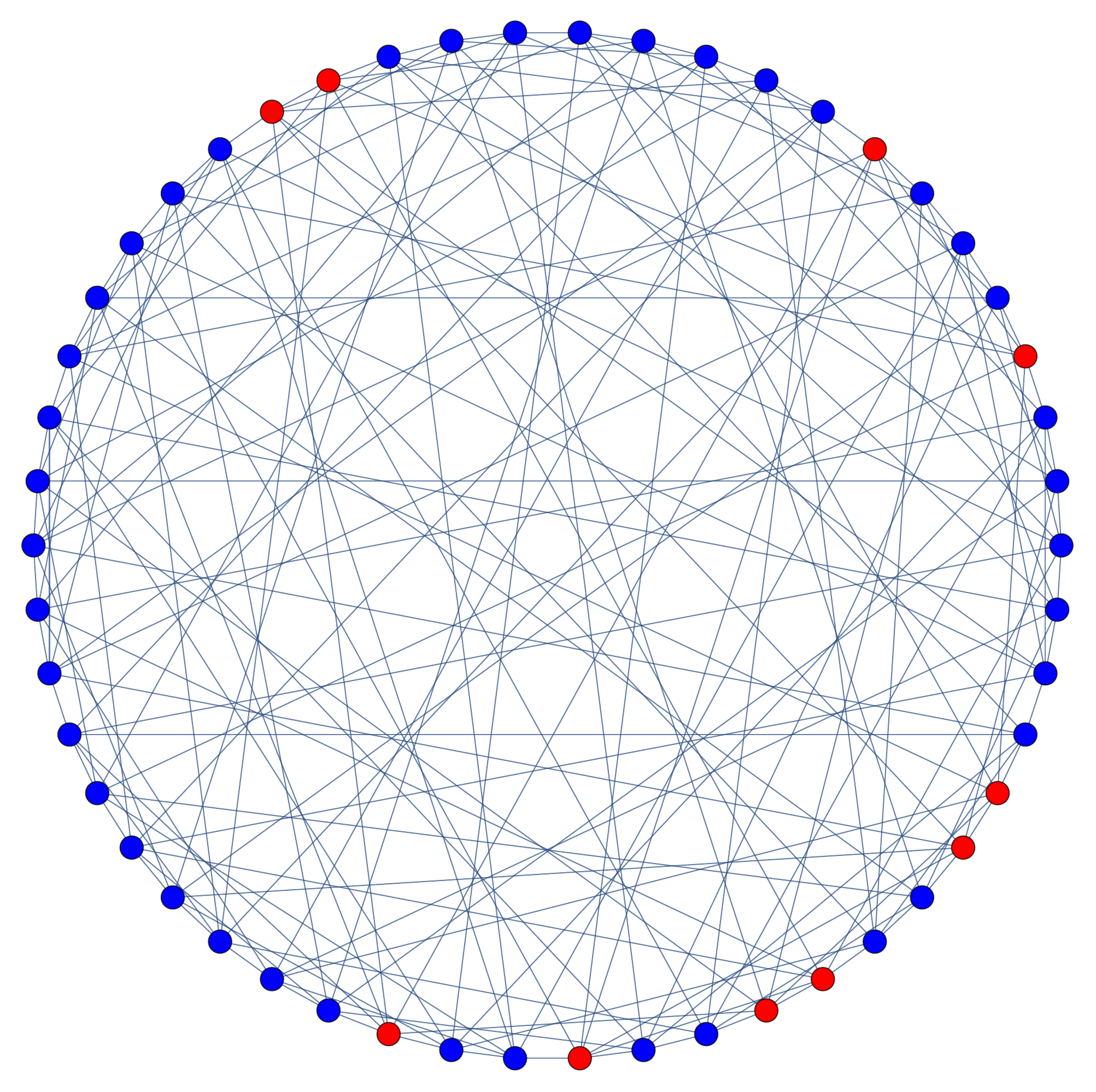}
    \caption{The Hoffman-Singleton graph on 50 vertices.  The 10 red vertices form a graphical design which integrate the first 49 eigenvectors of the graph Laplacian in a suitable basis. The induced subgraph of this design is 3-regular.}
    \label{bigboy}
\end{figure}

\subsection{Spherical Designs}

  We begin with a quick introduction to quadrature rules on the sphere. For more complete references on spherical harmonics, see, for instance, \cite{spherical} or \cite{HFT}.

 It may be impossible or computationally prohibitive to exactly integrate a function on the sphere $\SS^{d-1}$, so it is useful to find good numerical approximations. Spherical quadrature approaches this problem by looking for well distributed points over the sphere. A \emph{quadrature rule} is a set of points  $\{x_1,\ldots, x_N\} \subset \SS^{d-1}$  and weights $\a_i \in \RR$ chosen so that $$ \frac{1}{|\SS^{d-1}|} \int_{\SS^{d-1}} f(x) \ dx \approx \sum_{i=1}^N \a_i f(x_i)$$ whenever $f$ is smooth in some suitable way. For instance,  a \emph{spherical $t$-design} is a quadrature rule which integrates all polynomials up to degree $t$ exactly (\cite{DGSspherical}).  As one might guess, there are numerous ideas about how best to distribute the points. The strategy of Sobolev-Lebedev quadrature is to fix a finite-dimensional vector space of functions and find points which integrate the entire vector space exactly. This raises the question, what functions should one try to integrate? 
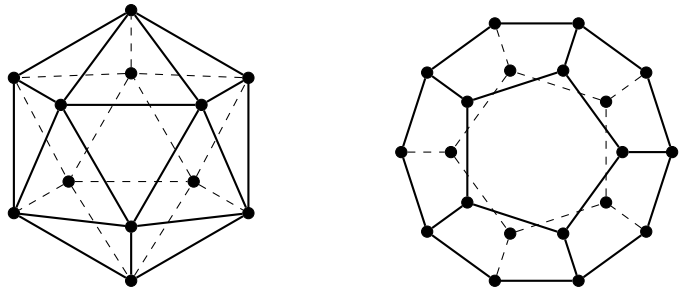
\begin{figure}[h!]
\begin{align*}
& \begin{tikzpicture}[scale=0.6]
  \tikzstyle{every node}=[circle,draw =black, fill=black ,inner sep=1.5pt]
    \foreach \y[count=\a] in {10,9,4}
      {\pgfmathtruncatemacro{\kn}{120*\a-90}
       \node at (\kn:3) (b\a){} ;}
    \foreach \y[count=\a] in {8,7,2}
      {\pgfmathtruncatemacro{\kn}{120*\a-90}
       \node at (\kn:1.8) (d\a) {};}
    \foreach \y[count=\a] in {1,5,6}
      {\pgfmathtruncatemacro{\jn}{120*\a-30}
       \node at (\jn:1.6) (a\a) {};}
    \foreach \y[count=\a] in {3,11,12}
      {\pgfmathtruncatemacro{\jn}{120*\a-30}
       \node at (\jn:3) (c\a) {};}
  \draw[dashed] (a1)--(a2)--(a3)--(a1);
  \draw[thick] (d1)--(d2)--(d3)--(d1);
  \foreach \a in {1,2,3}
   {\draw[dashed] (a\a)--(c\a);
   \draw[thick] (d\a)--(b\a);}
   \draw[thick] (c1)--(b1)--(c3)--(b3)--(c2)--(b2)--(c1);
   \draw[thick] (c1)--(d1)--(c3)--(d3)--(c2)--(d2)--(c1);
   \draw[dashed] (b1)--(a1)--(b2)--(a2)--(b3)--(a3)--(b1);
\end{tikzpicture}
&&   
\begin{tikzpicture}[scale=0.6] 
\tikzstyle{every node}=[circle,draw =black, fill=black ,inner sep=1.5pt]
    \foreach \y in {1,2,3,4,5,6,7,8,9,10}
        {\node at (36*\y +72:3) (a\y) {};}
    \foreach \y in {11,12,13,14,15}
        {\node at (72*\y :1.9) (b\y) {};}
    \foreach \y in {16,17,18,19,20}
        {\node at (72*\y+ 36 :1.9) (c\y) {};}
    \draw[dashed] (c16)--(c17)--(c18)--(c19)--(c20)--(c16);
    \draw[thick] (b11)--(b12)--(b13)--(b14)--(b15)--(b11);
    \draw[thick] (a1)--(a2)--(a3)--(a4)--(a5)--(a6)--(a7)--(a8)--(a9)--(a10)--(a1);
    \draw[thick] (a10) -- (b11);
    \draw[thick] (a8) -- (b15);
    \draw[thick] (a6) -- (b14);
    \draw[thick] (a4) -- (b13);
    \draw[thick] (a2) -- (b12);
    \draw[dashed] (a1)--(c16);
    \draw[dashed] (a9)--(c20);
    \draw[dashed] (a7)--(c19); 
    \draw[dashed] (a5)--(c18); 
    \draw[dashed] (a3)--(c17); 
\end{tikzpicture}
\end{align*}
\caption{The vertex sets of the icosahedron and dodecahedron both form spherical 5-designs,  integrating a 36-dimensional vector space of functions exactly. The icosahedron is also a Sobolev-Lebedev quadrature rule.}
\end{figure}

Sobolev suggested that a suitable vector space of functions should respect the symmetries of the sphere (see \cite{sobolev, Lebedev}), leading us to spherical harmonics.  Spherical harmonics are the eigenfunctions of the Laplace-Beltrami operator $\Delta$, where a function $f:\SS^{d-1} \to \RR$ is called an eigenfunction of $\Delta$ if $\Delta f= \lambda f$ for some eigenvalue $\l \in \RR$.  The eigenfunctions of $\Delta$ form an orthogonal basis for $L^2(\SS^{d-1})$ consisting of harmonic, homogeneous polynomials.  In the one dimensional case, spherical harmonics lead to the standard trigonometric polynomials; expanding a function in this basis is its Fourier series.  Spherical harmonics can be naturally ordered by frequency: a spherical harmonic with eigenvalue $\l$ has a low frequency if $|\l|$ is small. The idea behind Sobolev-Lebedev quadrature is to integrate all low-frequency spherical harmonics up to a chosen threshold for $|\l|$. 

\subsection{Graphical Designs}

The graphical design problem is an extension of spherical quadrature to functions on graphs, first introduced by Steinerberger in \cite{graphdesigns}. This notion of graphical design is distinct from the graphical designs in design theory (see \cite[p. 222]{Stinson}).  Let $G = (V,E)$ be a graph with vertex set $V= [n] := \{1,\ldots, n\}$ and edge set $E$, and consider a class of functions $\phi_i: V\to \RR$ which are ``smooth" with respect to the geometry of $G$, a notion which we will make precise shortly. A quadrature rule on $G$ is a subset $W\subset V$ and a set of weights $\{\a_w\}_{w\in W}$ such that for each $\phi_i,$
$$\frac{1}{|V|}\sum_{v \in V} \phi_i(v) = \sum_{w \in W} \a_w\phi_i(w). $$
In this paper, we only consider equal weights; we would like that for each $\phi_i,$
$$\frac{1}{|V|}\sum_{v \in V} \phi_i(v) = \frac{1}{|W|}\sum_{w \in W} \phi_i(w). $$ 

\textcolor{cyan}{To mirror quadrature on the sphere, we use eigenvectors of the discrete Laplacian of $G=(V,E)$ as the class of smooth functions. We define the Laplacian as 
\begin{align*}
    &L = AD^{-1} - I, &&\text{which is equivalent to}  \quad  (Lf)(u) = \sum_{v: uv \in E} \left( \frac{f(v)}{\deg(v)}- \frac{f(u)}{\deg(u)}\right).
\end{align*}
The adjacency matrix $A$ is defined by $A_{ij} = 1$ if $ij\in E$ and 0 otherwise, $D$ is the diagonal matrix with $D_{ii} = \deg(v_i)$, and $I$ is the $n \times n$ identity matrix. In the smooth case, Taylor expansion shows that the Laplace-Beltrami operator is essentially the average value in a neighborhood of a point. This graph Laplacian analogously captures averaging over the neighborhood of a vertex. }

The definition of a graphical design is dependent on the choice of Laplacian. Other common graph Laplacians include $D-A$ and the normalized Laplacian $I - D^{-1/2}A D^{-1/2}$.  We refer the reader to \cite{ChungSpectral} for more on the various graph Laplacians. We focus on regular graphs where these notions of Laplacian coincide; if $G = (V,E)$ is $\d$-regular, then $$L\phi = \l \phi,  \iff (D-A) \phi = -\d\l \phi \iff (I-D^{-1/2}A D^{-1/2})\phi = -\l \phi.$$
 
 Note that the operator $AD^{-1}$ is a nonnegative Markov matrix, and hence has spectrum $\sigma(AD^{-1}) \subseteq [-1,1]$. Thus $\sigma(L) \subseteq [-2,0]$. We order the eigenvalues as in \cite{graphdesigns}, where $\l_1$ has the lowest frequency and $\l_n$ has the highest:  $$|\l_1+1| \geq |\l_2+1| \geq \ldots \geq |\l_n+1|.$$  We now provide some intuition for how this is analogous to the frequency ordering on spherical harmonics.  In a regular graph, $L \ones = 0 = \l_1$, where $\ones$ is the all-ones vector.  The eigenvector $\ones$ is constant across all vertices, and hence is the ``smoothest" possible with respect to the structure of any graph. By smooth, we mean the function does not change dramatically across vertices which are highly connected or in some sense representative of the same part of the graph. Likewise, if $G$ is bipartite, then $L$ has an eigenvector $\phi$ with eigenvalue $\l_2=-2$, and $\phi$ is constant on each part of the bipartition. Eigenvectors with eigenvalue near $-1$ do not exhibit this type of smoothness (see Figure 3).
 
\begin{figure}[h!]
\centering
\begin{tabular}{cc}
\subfloat[The 1$^\text{st}$  eigenvector.]{\includegraphics[scale = .18]{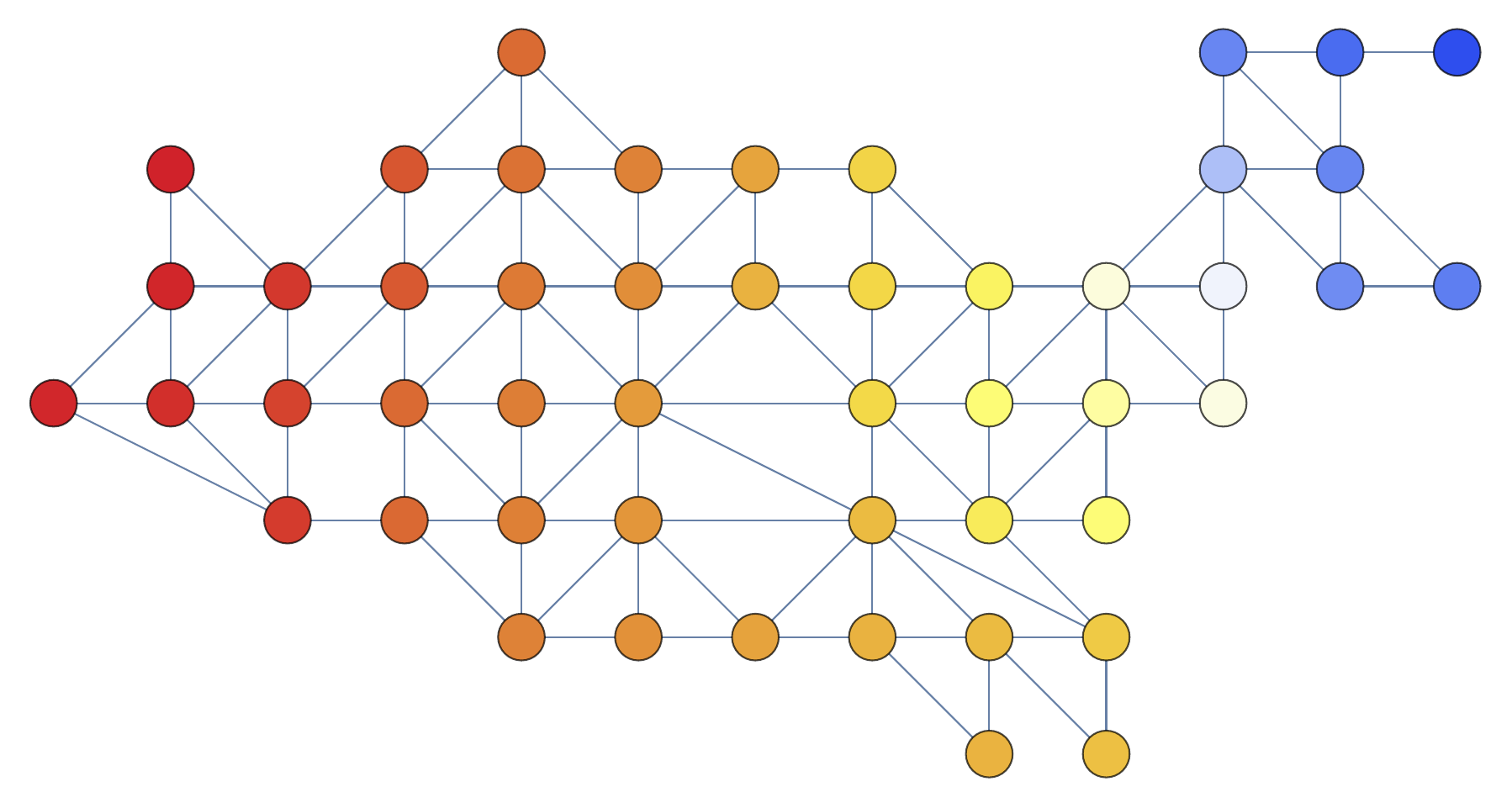}} & 
\subfloat[The 2$^\text{nd}$  eigenvector.]{\includegraphics[scale = .18]{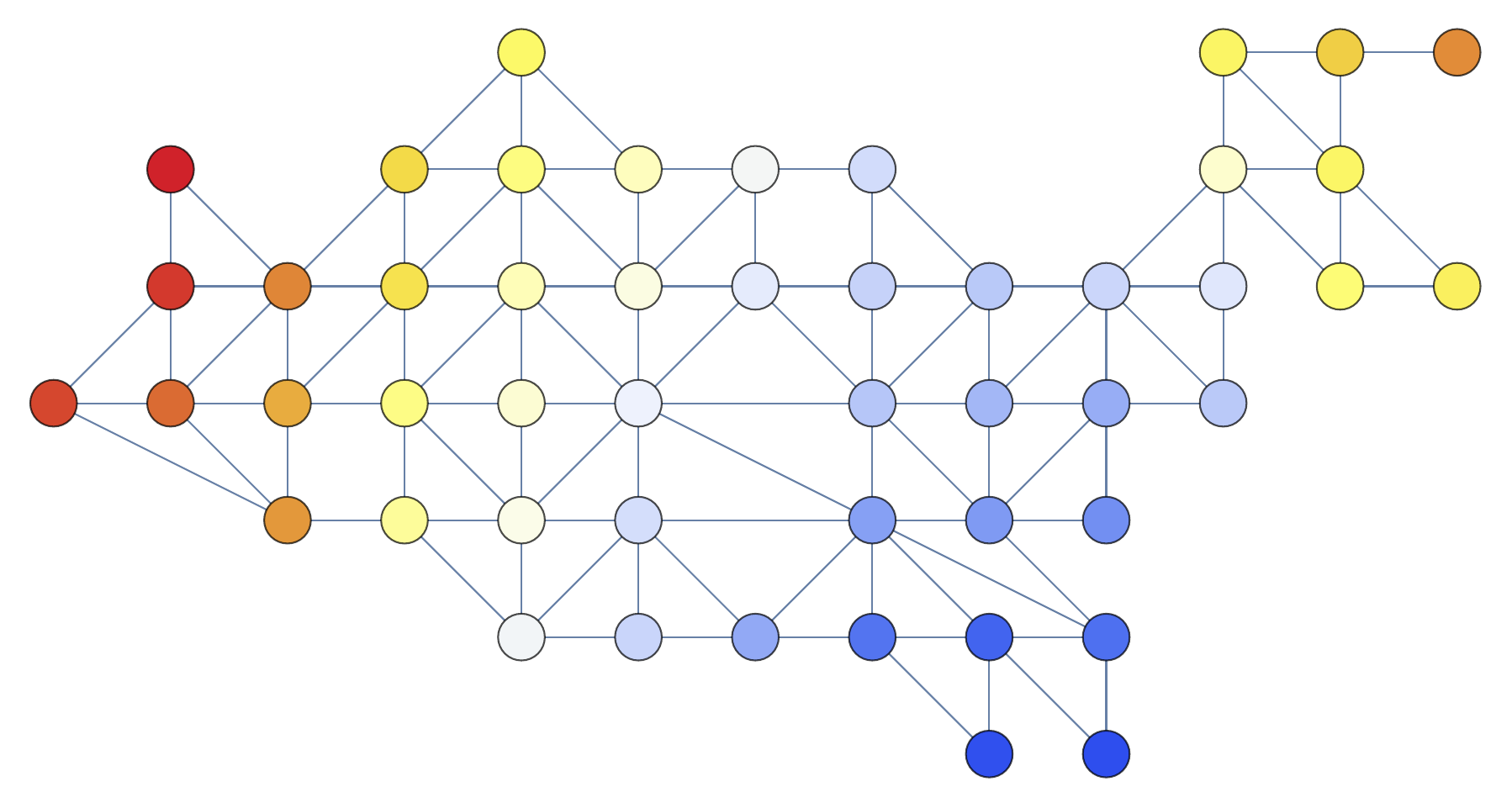}} \\
\subfloat[The 11$^{\text{th}}$ eigenvector.]{\includegraphics[scale = .18]{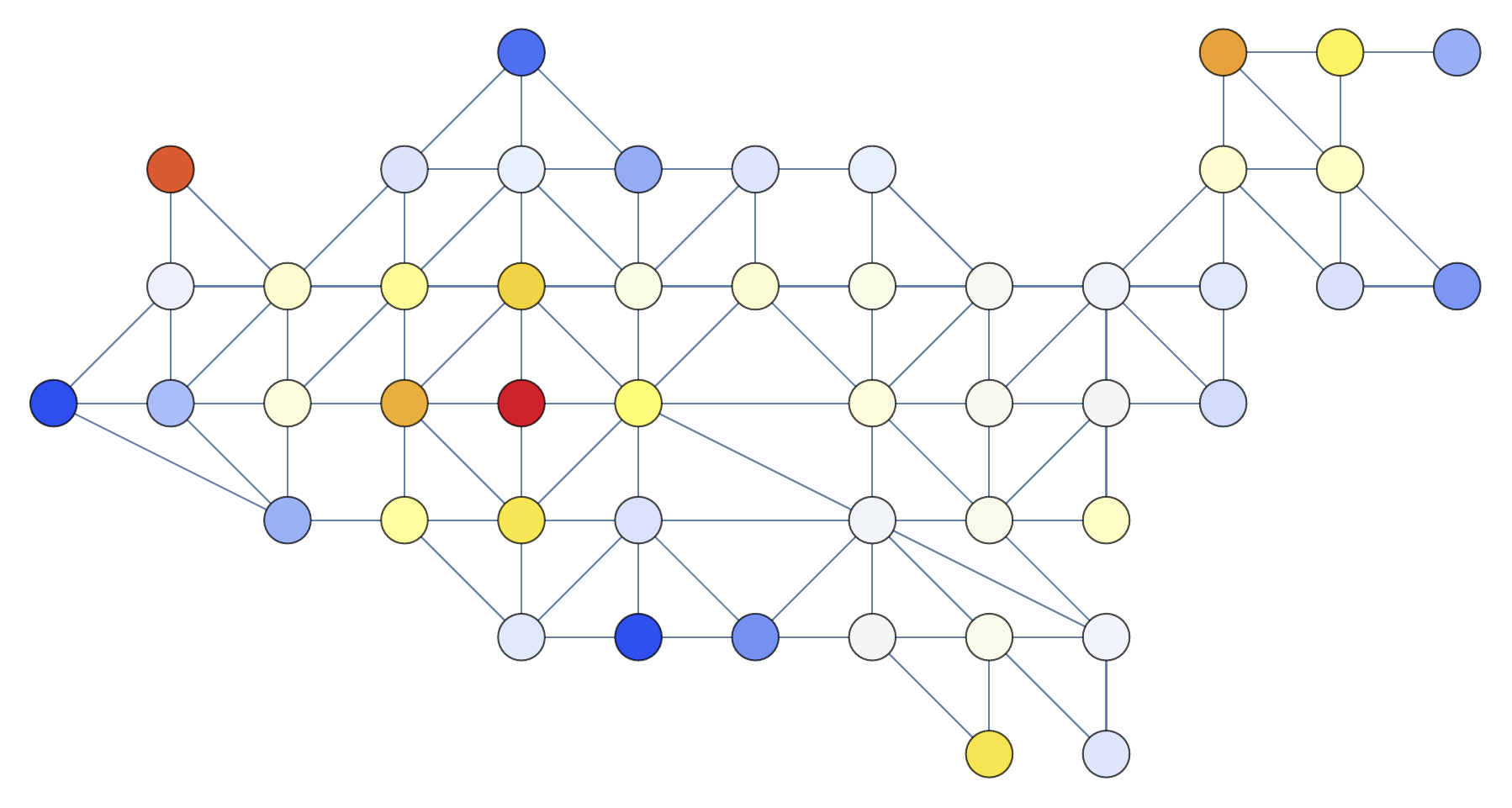}} & 
\subfloat[Average Annual Precipitation in the Contiguous US, 1981-2010]{\includegraphics[scale =.18]{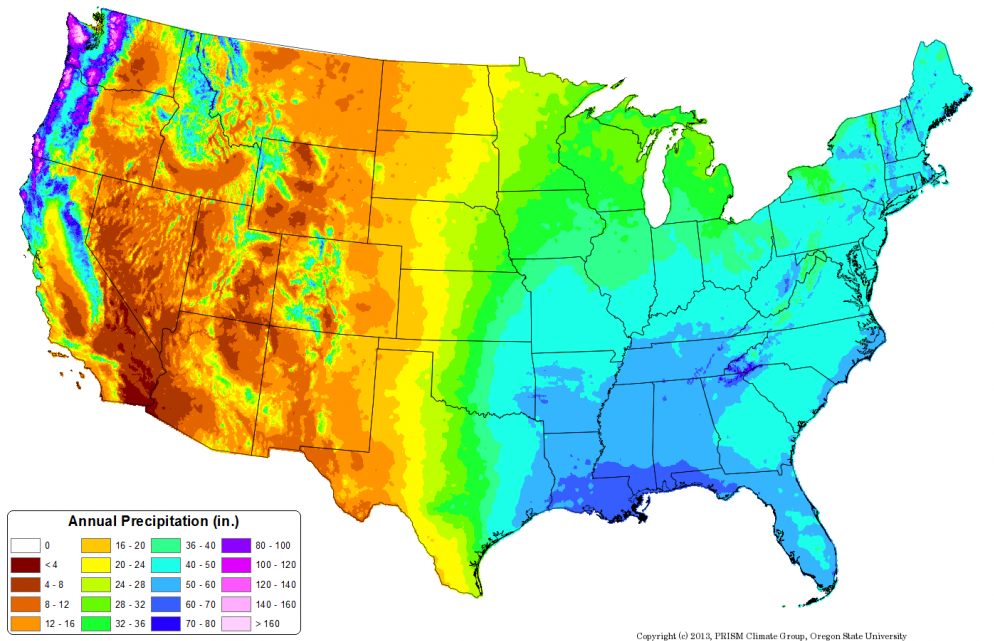}}
\end{tabular}
\caption{The contiguous United States graph. Vertices represent states, which are connected by an edge if they share a border.  The first two eigenvectors by frequency are ``smooth'' with respect to the graph geometry -- they vary smoothly across the country. The eleventh eigenvector does not.  Average annual precipitation by state varies relatively smoothly across geography.  Most of the information of this function resides in the low-frequency eigenvectors of the graph Laplacian.}
\label{fig: precip example}
\end{figure}

 We will denote the associated eigenvectors as $\phi_1,\ldots, \phi_n$ and interpret them as functions $\phi_i: V\to \RR$, where $\phi_i(v) $ is the value of the $i^{th}$ eigenvector at the vertex $v$.  In highly symmetric graphs, we would expect vertices to reflect graph symmetries, but not otherwise be distinguished. Therefore we focus on the case of equal weights, where a graphical design is as follows.

\begin{definition}
\emph{Given a finite, simple, unweighted, connected graph $G = (V,E)$, we say a subset $W \subset V$ \emph{integrates an eigenvector} $\phi$ of $L= AD^{-1}-I$ if $$ \frac{1}{|W|} \sum_{w \in W}  \phi(w) = \frac{1}{|V|}\sum_{v \in V} \phi(v).$$
We say a subset $W \subset V$ \emph{integrates an eigenspace} $\L$ of $L$ if $W$ integrates every eigenvector in a basis of $\L$.
A  \emph{$k$-graphical design} is $W \subset V$ such that $W$ integrates the first $k$ eigenspaces with respect to the frequency ordering.  If the context is clear, we may drop the word graphical and refer to $k$-designs.}
\end{definition}

\textcolor{cyan}{By linearity, if $W$ integrates every eigenvector in a basis of the eigenspace $\L$, then $W$ integrates every vector
in $\L$, and therefore every basis of $\L$. Thus the definition of integrating an eigenspace is unambiguous.} Given Definition 1.1, it is natural to ask how good a design can be, in the sense that a small number of vertices integrates a large number of eigenspaces. In order to quantify this, we define the following.

\begin{definition}
 \emph{Let $G = (V,E)$ be a finite, simple, connected, and unweighted graph with $m$ distinct eigenspaces ordered from low to high frequency as $\L_1 \leq... \leq \L_m$, where the eigenvalue of $\L_1$ has the lowest frequency and the eigenvalue of $\L_m$ has the highest frequency.  An \emph{optimal design} is $W \subset V$ integrating $\L_1,\ldots, \L_k$ with} $$\frac{|W|}{\sum_{i=1}^{k}\dim(\L_{i})} = \min\left\{ \frac{|W'|}{\sum_{i=1}^{k'}\dim(\L_{i})}: W' \subset V \text{ integrates } \L_1,\ldots, \L_{k'}\right\}.$$ 
 \emph{We define $\eff(W)$ to be the ratio on the left hand side of this equality.}
\end{definition}

\begin{figure}[h!]
     \centering
\begin{tikzpicture}[scale =.5]
\tikzstyle{bk}=[fill,circle, draw=black, inner sep=2 pt]
\tikzstyle{red}=[fill =red,circle, draw=red, inner sep=2 pt]
\tikzstyle{bl}=[fill=blue,circle, draw=blue, inner sep=2 pt]
 \foreach \y in {1,2,3,4,5,6,7,8,9}
        {\node at (40*\y +72:3) (a\y) [bk] {};}
\foreach \y in {1,2,3}
        {\node at (120*\y +30:1.2) (b\y) [bk] {};}
\node at (a8)  [red] {};
\node at (b3)  [red] {};
\node at (a3)  [red] {};
\node at (a4)  [red] {};
\draw[thick] (b1) -- (b2) -- (b3) -- (b1);
\draw[thick] (a1) -- (a2) -- (a3) -- (a4) -- (a5) -- (a6) -- (a7) -- (a8) -- (a9) --(a1);
\draw[thick] (a7) -- (a9);
\draw[thick] (a1) -- (a3);
\draw[thick] (a4) -- (a6);
\draw[thick] (b1) -- (a2);
\draw[thick] (b2) -- (a5);
\draw[thick] (b3) -- (a8);
\end{tikzpicture}
     \caption{The truncated tetrahedral graph is a 3-regular graph on 12 vertices. This graph has the following eigenvalues ordered by frequency, where the exponents represent multiplicity: $ 0^1,  (-5/3)^3,(-1/3)^3,(-4/3)^3,(-1)^2. $ The subset of red vertices integrates all eigenspaces but the eigenspace for $\l = -1$.  This is thus a 4-graphical design with efficacy $4/10$.}
     \label{fig: trunc tetra}
 \end{figure}
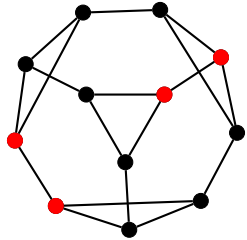
 
 \textcolor{cyan}{ The problem posed by Steinerberger \cite{graphdesigns} investigated how large $k$ can be given $|W|$. A dual question is, given $k$, how large must $|W|$ be?  The optimal design framework is distinct from both questions, as one could be interested in designs which are smaller than the optimal design or which integrate more eigenspaces than the optimal design. We find the notion of optimality a natural way to balance the trade off between $|W|$ and $k$.}

\subsection{Related work}

Quadrature rules for surfaces other than spheres are a relatively new field. For more on the smooth case, see, for instance, \cite{BRVmanifoldsI,BRVmanifoldsII,GGmanifolds, riemanndesigns}.
The graph sampling problem has been investigated primarily from an engineering perspective; see, for instance, \cite{samplingAGO,samplingTBD,samplingTEOC}. We also refer to the work of Pesenson (e.g. \cite{PesensonIII, PesensonI, PesensonII}).

The graphical design problem was first introduced by Steinerberger in \cite{graphdesigns}. Its main theorem loosely states that if $W$ is a good graphical design, then either $|W|$ is large, or the $j$-neighborhoods of $W$ grow exponentially. Steinerberger and Linderman (\cite{SSLinderman}) consider the numerical integration side of graphical designs.  They find an upper bound on the integration error for any quadrature rule on a graph, which multiplicatively separates out the size of a function $f$ and a quantity which can be interpreted as the quality of the quadrature scheme. 

Extremal designs, which are graphical designs that integrate all but one eigenvector, were introduced in \cite{Golubev}. Golubev shows that stable sets in $G=(V,E)$ which attain the Hoffman bound and subsets of $V$ which attain the Cheeger bound are extremal. He applies these results to several families of graphs, including Kneser graphs and graphs of the $d$-cube. We discuss this in more detail in Section 5.  
\vspace{.2 in}

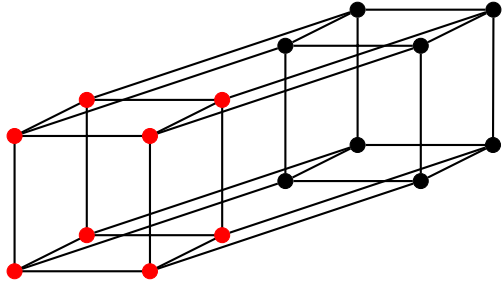
\begin{figure}[h!]
     \centering
\begin{tikzpicture}[scale =.6]
\tikzstyle{bk}=[fill,circle, draw=black, inner sep=2 pt]
\tikzstyle{red}=[fill =red,circle, draw=red, inner sep=2 pt]
\tikzstyle{bl}=[fill=blue,circle, draw=blue, inner sep=2 pt]
\draw[thick] (0,0) -- (3,0) -- (3,3) -- (0,3) -- (0,0) -- (1.6,.8) -- (4.6,.8) -- (4.6,3.8) -- (1.6,3.8) -- (1.6,.8);
\draw[thick] (3,0)  -- (4.6,.8);
\draw[thick] (3,3)  -- (4.6,3.8);
\draw[thick] (0,3)  -- (1.6,3.8);
\node (x1) at (0,0) [red] {};
\node (x2) at (3,0)  [red] {};
\node (x3) at (3,3)  [red] {};
\node (x4) at (0,3)  [red] {};
\node (y1) at (1.6,.8)  [red] {};
\node (y2) at (4.6,.8) [red] {};
\node (y3) at (4.6,3.8) [red] {};
\node (y4) at (1.6,3.8)  [red] {};
\node (v1) at (6,2)  [bk]{};
\node (v2) at (9,2) [bk] {};
\node (v3) at (6,5) [bk] {};
\node (v4) at (9,5) [bk] {};
\node (w1) at (7.6,2.8) [bk] {};
\node (w2) at (10.6,2.8) [bk] {};
\node (w3) at (10.61,5.8) [bk] {};
\node (w4) at (7.6,5.8) [bk] {};
\draw[thick] (v1) -- (v2) -- (v4) -- (v3) -- (v1) -- (w1) -- (w2) -- (w3) -- (w4) -- (w1);
\draw[thick] (v2)  -- (w2);
\draw[thick] (v4)  -- (w3);
\draw[thick] (v3)  -- (w4);
\draw[thick] (x1)  -- (v1);
\draw[thick] (x2)  -- (v2);
\draw[thick] (x3)  -- (v4);
\draw[thick] (x4)  -- (v3);
\draw[thick] (y1)  -- (w1);
\draw[thick] (y2)  -- (w2);
\draw[thick] (y3)  -- (w3);
\draw[thick] (y4)  -- (w4);
\end{tikzpicture}
     \caption{The vertices in red are a subset which attains the Cheeger bound for the cube graph $Q_4$. By  \cite[Theorem 2.4]{Golubev}, this is an extremal design. }
     \label{fig:Gol Q4}
 \end{figure}
\noindent

\section{Main Results}
We present an overview of the structure of this paper and the main results. In broad strokes, we summarize our findings with the following two principles:
\begin{principle}
Linear codes are a good place to look for graphical designs in cube graphs.  The Hamming code and some of its variants are effective graphical designs.
\end{principle}
\noindent
\begin{principle}
Graphical designs are distinct from several related and previously established combinatorial constructions on graphs.  Even in highly structured graphs, graphical designs are not the same as extremal designs, maximum stable sets in distance graphs, and $t$-designs on association schemes. 
\end{principle}
\noindent
Table \ref{table of designs} collects our main results and examples, and compares how they stack up among the variety of concepts we consider. We encourage the reader to revisit this table after encountering these combinatorial concepts in the text.

\subsection{Eigenspaces vs. Eigenvectors}
Graphical designs were originally defined in terms of integrating eigenvectors, not eigenspaces in \cite{graphdesigns}.  We show in Lemma \ref{change of basis} that a graphical design $W$ can either integrate an entire eigenspace $\L$, or if $W$ cannot integrate the eigenspace $\L$, then for any $i \in \{0,1,\ldots, \dim(\L)-1\}$, there is an eigenbasis $B$ of $\L$ such that $W$ integrates $i$ eigenvectors of $B$. The proof is constructive -- we provide what is essentially an algorithm for constructing $B$. Due to this lemma, we have made the choice to define graphical designs in terms of eigenspaces, not eigenvectors.

\subsection{Designs on Cube Graphs}

An open question in \cite{graphdesigns} was to find graphical designs on families of graphs.  To this end, Section 4 focuses on graphical designs on the graph of the $d$-cube, which we denote $Q_d$. We show that linear codes are generally good candidates [Theorem \ref{lincode thm}].  The Hamming code for the $(2^r-1)$-cube, in particular, integrates all but the last eigenspace in the frequency ordering [Theorem \ref{Main Hamm}].  Moreover, we show that the Hamming code is the unique smallest linear code which integrates all but the last eigenspace by frequency of the $(2^r-1)$-cube [Theorem \ref{Hamm is small}]. We can derive other highly effective graphical designs on \textcolor{cyan}{$Q_{2^{r}}$ and $Q_{2^{r}+1}$} by lifting the Hamming code [Theorem \ref{Lift Hamm}], and fairly effective graphical designs on \textcolor{cyan}{$Q_{2^r-2}$} by projecting the Hamming code [Theorem \ref{Hamm Proj}].

\subsection{Extremal Designs}
We next turn to the concept of extremal designs in Section 5. By definition, extremal designs do not consider the frequency ordering on eigenspaces, nor do they take into account the size of the graphical design. The main results in \cite{Golubev} find extremal designs through the Hoffman bound and Cheeger bound. We show that stable sets achieving the Hoffman bound and subsets acheiving the Cheeger bound do not integrate an eigenspace which is generally early in the frequency ordering [Theorems \ref{myHoff}, \ref{myCheeg}]. We show the Hamming code on the 3-cube is an optimal design by showing it is a stable set which meets the Hoffman bound for a different distance graph on the cube. We compare the efficacy of several graphical designs on cube graphs found here and in \cite{Golubev}. The Hoffman bound connects graphical designs to maximum stable sets. We show through an example that optimal designs need not be maximum stable sets, or stable sets at all.

\subsection{Association Schemes}
Another combinatorial object that seems, at first glance, to coincide with graphical designs are classical $t$-designs. We show that this is not the case in Section 6.  Classical $t$-designs do integrate some eigenvectors of $L=AD^{-1} -I$ [Theorem \ref{TD neq GD}]. However, optimal graphical designs need not be $t$-designs for any choice of $t$. We exhibit a family of graphs arising from the Johnson scheme which have an extremal design that is not a classical $t$-design for any choice of $t$ [Proposition \ref{johnson graph counterex}]. In particular, we show a graph arising from the $(8,3)$ Johnson scheme for which this subset is optimal (Example \ref{optimal but not t johnson}). We also show that the Hamming code is a better graphical design than $t$-design, integrating about twice as many eigenspaces in the frequency ordering as opposed to the ordering imposed in the association scheme framework [Proposition \ref{hamm better GD than TD}].

\begin{table}[h!]
    \centering
\begin{tabular}{ c c | c c c c c}
  Graph &  Subset & $k$-design & $t$-design & Extremal & Optimal & Stable Set\\  \hline
  $K_n$ & $\{1\}$ & 1 & N/A & yes & yes & yes \\
$Q_{2^r-1}$ & $H_{r}$ & $2^r-1$ &  $2^{r-1}-1$ & yes & ? & yes \\ 
$Q_{2^r-2}$ & $\pi(H_{r})$ & $2^r-3$ &  $2^{r-1}-2$& no& ? & yes \\
$Q_4$ & $H_2'$ & 4 &  1 & yes& yes & no \\
$Q_d$ &  $\{x: x_1 =0\}$ & $3$ & no & yes& no & no \\
$G_2$ & Y  & $3$ & no& yes & yes & no \\
\end{tabular}
    \caption{$Q_{d}$ is the $d$-cube graph. $H_{r}$ denotes the Hamming code on the $(2^r-1)$-cube, $\pi(H_r)$ denotes the projected Hamming code, and $H_r'$ denotes its lift. $G_2$ refers to the construction defined in Lemma \ref{distgraphs} for the (8,3) Johnson scheme, and $Y$ is the collection of all three-element subsets of $[8]$ containing $\{1\}$ (see Example \ref{optimal but not t johnson}). }
    \label{table of designs}
\end{table}
\vspace{-.2 in}

\section{The Problem of Eigenspaces vs. Eigenvectors}

Graphical designs were originally defined by ordered integration of eigenvectors, not eigenspaces, in \cite{graphdesigns}.  It was left undetermined how to handle eigenspaces with multiplicity in the discrete case. In the continuous case, multiplicity of eigenspaces is typically ignored due to the following folklore result. 
\begin{lemma}[Folklore]
\emph{For a smooth manifold, the dimension of an eigenspace of the Laplace-Beltrami operator $\Delta$ is small compared to the prior number of eigenvalues \textcolor{cyan}{(with multiplicity)} ordered by frequency.}
\end{lemma}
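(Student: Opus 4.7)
The statement is informal, so my plan is first to pin down a precise version and then reduce it to standard spectral geometry. I would take $M$ to be a smooth closed Riemannian manifold of dimension $d$, list the eigenvalues of $-\Delta$ as $0 = \l_0 < \l_1 \leq \l_2 \leq \ldots$ with multiplicity, define the counting function $N(\l) = \#\{i: \l_i \leq \l\}$, and let $m(\l) = \dim \ker(\Delta + \l I)$ denote the multiplicity of $\l$. The precise claim I would target is that $m(\l)/N(\l) \to 0$ as $\l \to \infty$, so that every eigenspace is eventually negligible compared to the number of earlier eigenvalues in the frequency ordering.

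The first half of the argument is Weyl's law,
$$N(\l) = \frac{\omega_d \operatorname{vol}(M)}{(2\pi)^d} \l^{d/2} + O(\l^{(d-1)/2}),$$
where $\omega_d$ is the volume of the Euclidean unit $d$-ball; in particular $N(\l)$ grows like $\l^{d/2}$. The second half is the Avakumovi\'c--Levitan--H\"ormander bound $m(\l) = O(\l^{(d-1)/2})$, which is a direct consequence of that remainder, since a multiplicity $m(\l)$ at the eigenvalue $\l$ forces a jump of size $m(\l)$ in $N$ at that point. Dividing gives $m(\l)/N(\l) = O(\l^{-1/2}) \to 0$, which is the quantitative version of "small compared to the prior number of eigenvalues."

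The main obstacle is interpretive rather than technical. On highly symmetric manifolds such as $\SS^d$ the multiplicity genuinely attains the order $\l^{(d-1)/2}$, so no uniform pointwise bound better than H\"ormander's is available without extra hypotheses, and the conclusion must be read asymptotically rather than term-by-term. Once \emph{small} is understood as a \emph{vanishing ratio as $\l \to \infty$}, the lemma is a direct packaging of Weyl's law with the multiplicity bound and there is no substantial work beyond citing these two results. The reason for including it at this point in the paper is the contrast with the discrete setting: on a finite graph there is no $\l \to \infty$ limit, so an individual eigenspace can be a non-negligible fraction of the spectrum, which is precisely why the author needs a substitute statement (the change-of-basis lemma) in the graph case.
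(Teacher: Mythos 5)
Your argument is essentially identical to the paper's: both apply Weyl's law with remainder $\cO(\l^{(d-1)/2})$, bound the multiplicity by the jump $N(\l+\e)-N(\l-\e)$ so that the main terms cancel and leave $\dim(\L) = \cO(\l^{(d-1)/2})$, and compare this against the $\l^{d/2}$ growth of the counting function. Your added precision (phrasing the conclusion as $m(\l)/N(\l) = \cO(\l^{-1/2}) \to 0$ and noting sharpness on $\SS^d$) is a refinement of presentation, not a different proof.
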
 
\begin{proof}[Idea]
Weyl's law (\cite{Weyl}) states that for a sufficiently smooth compact manifold \textcolor{cyan}{embedded} in $\RR^d$, the number of eigenvalues $\leq \lambda$ of $\Delta$ \textcolor{cyan}{(with multiplicity)} is $$N(\lambda) = \b\cdot\lambda^{d/2}  + \cO(\lambda^{(d-1)/2})$$
for some constant $\b$.  So, suppose there is an eigenvalue $\l$ whose eigenspace $\L$ has multiplicity. Then for some small $\e$,
$N(\l+\e) - N(\l-\e) = \dim (\L).$
By applying Weyl's law twice, we have that $ \dim (\L) = \cO(\l^{(d-1)/2}).$
On the other hand, the total number of eigenvalues $<\l$ (with multiplicity) is on the order of $\l^{d/2}$, which is comparatively much bigger than $\dim (\L)$. 
\end{proof}

In the discrete case, the problem of eigenspaces with multiplicity is more substantial. There is ambiguity when confronted with infinitely many eigenbases, each of which can behave differently in terms of integration by a subset $W\subset V$. Some graphs come with a well-known set of eigenvectors; for example, the eigenvectors of a Cayley graph can be found using group characters. However, from a numerical integration perspective, there is no obvious reason for why such an eigenbasis is better than any other eigenbasis.  Moreover, the majority of graphs do not come paired with a well-known eigenbasis. The following lemma justifies Definitions 1.2 and 1.3 in terms of eigenspaces, by arguing that an eigenspace should be thought of as a single unit, rather than as a collection of eigenvectors. 

For $W\subset V$, let $\ones_W \in \RR^V$ be the indicator vector of $W$, that is $\ones_W(x) = 1$ if $x\in W$ and $\ones_W(x) =0$ if $x\notin W$. We denote the all-ones vector $\ones_V \in \RR^V$ by $\ones.$

\begin{lemma}\label{change of basis}
\emph{\textcolor{cyan}{Let $G=(V,E)$ have an eigenspace $\L$ of $L$ with $\dim \L >1$. For any $W \subset V$, either $W$ integrates $\L$, or for each $j =0, \ldots, \dim \L-1$, there is an orthonormal basis of $\L$ such that $W$ integrates precisely $j$ eigenvectors in this basis.} }
\end{lemma}
\begin{proof} \textcolor{cyan}{Suppose $W$ does not integrate $\L$. We will first show that if $W$ integrates precisely $j \in \{1,\ldots, \dim \L-1\}$ eigenvectors in a basis of $\L$, we can construct an orthonormal $B$ of $\L$ for which $W$ integrates precisely $j-1$ eigenvectors in $B$.  Let $\{\phi_i\}_{i=1}^{\dim \L} $ be an orthonormal basis for $\L$ where $W$ integrates $j$ eigenvectors of this basis.  We may reorder the basis vectors and suppose that $W$ integrates $\phi_2$ but not $\phi_1$. Consider $$B = \{ \phi_1 + \phi_2,   \phi_1 - \phi_2, \phi_3, \ldots, \phi_{\dim \L}\}.$$ Then $\spanset(B) = \L$, $$\langle \phi_1 \pm \phi_2, \phi_i \rangle =  \langle \phi_1 , \phi_i \rangle \pm  \langle \phi_2, \phi_i \rangle =0 \text{ for } 2 < i \leq \dim \L,  \text{ and }$$
 $$ \langle \phi_1 +\phi_2, \phi_1 -\phi_2 \rangle = \|\phi_1\|^2 - \|\phi_2\|^2 + \langle \phi_1 , \phi_2 \rangle- \langle \phi_1 , \phi_2 \rangle = 0.$$
  Thus $B$ is an orthogonal basis for $\L$.  Since $W$ does not integrate $\phi_1$ and does integrate $\phi_2$, we have
  $$\frac{1}{|W|}\phi_1^T \ones_W \neq \frac{1}{|V|} \phi_1^T \ones \text{   and } \frac{1}{|W|}\phi_2^T \ones_W = \frac{1}{|V|} \phi_2^T \ones .$$ Therefore,
  $$ 0 \neq \frac{1}{|W|}\phi_1^T \ones_W - \frac{1}{|V|}\phi_1^T \ones =  \frac{1}{|W|}(\phi_1 \pm \phi_2)^T \ones_W - \frac{1}{|V|}(\phi_1 \pm \phi_2)^T \ones .$$ 
    Thus $W$ integrates $j-1$ eigenvectors in $B$, and we can normalize to obtain an orthonormal basis of $\L$ with this property. }
  
\textcolor{cyan}{Next, we show that if $W$ integrates exactly $j \in \{0, \ldots, \dim \L-2\}$ eigenvectors in a basis of $\L$, we can construct an orthonormal $B$ of $\L$ for which $W$ integrates precisely $j+1$ eigenvectors in $B$. Let $\{\phi_i\}_{i=1}^{\dim \L} $ be an orthonormal basis for $\L$ where $W$ integrates $j$ vectors of $\{\phi_i\}_{i=1}^{\dim \L} $.  We may reorder the basis vectors and suppose that $W$ does not integrate $\phi_1$ and $\phi_2$.  Let $\a_1 = \phi_1^T( \ones/|V| - \ones_W/|W|)$ and   $\a_2 = \phi_2^T ( \ones/|V| - \ones_W/|W|)$. Since $W$ does not integrate $\phi_1$, $$\a_1 =\phi_1^T\left( \frac{\ones}{|V|} - \frac{\ones_W}{|W|}\right) \neq 0.$$  Likewise we have that $\a_2 \neq 0$. Consider $$B = \left\{\a_1\phi_1 + \a_2\phi_2,\frac{\phi_1}{\a_1} - \frac{\phi_2}{\a_2}, \phi_3, \ldots, \phi_{\dim \L}\right\} .$$  Then $\spanset(B) = \L$,
$$\langle  \a_1\phi_1 + \a_2\phi_2, \phi_i \rangle =0  \text{ and } \left\langle  \frac{\phi_1}{\a_1} - \frac{\phi_2}{\a_2}, \phi_i \right\rangle =0 \text{ for } 2 < i \leq \dim \L, \text{ and}$$
$$ \left\langle \a_1\phi_1 + \a_2\phi_2,\frac{\phi_1}{\a_1} - \frac{\phi_2}{\a_2} \right\rangle = \frac{\a_1}{\a_1}
\|\phi_1\|^2 + \frac{\a_2}{\a_1} \phi_2^T\phi_1 - \frac{\a_1}{\a_2} \phi_1^T\phi_2-  \frac{\a_2}{\a_2}\|\phi_2\|^2 =0,$$
so $B$ is an orthogonal eigenbasis for $\L$. Moreover, $W$ integrates $\phi_1/\a_1 - \phi_2/\a_2$:
$$\frac{1}{|V|} \left(\frac{\phi_1}{\a_1} - \frac{\phi_2}{\a_2}\right)^T \ones - \frac{1}{|W|}\left(\frac{\phi_1}{\a_1} - \frac{\phi_2}{\a_2}\right)^T \ones_W  =   \frac{\a_1}{\a_1} - \frac{\a_2}{\a_2}  = 0.$$ However $W$ does not integrate $\a_1\phi_1 +\a_2 \phi_2$:
$$ \frac{1}{|V|}(\a_1\phi_1 + \a_2\phi_2)^T \ones-  \frac{1}{|W|}(\a_1\phi_1 + \a_2\phi_2)^T \ones_W  =  \a_1^2 +\a_2^2 \neq 0.$$
 Thus $B$ is a orthogonal basis for $\L$ such that $W$ integrates precisely $j+1$ eigenvectors in $B$. We can normalize to obtain an orthonormal basis with this property.}
 
\textcolor{cyan}{Thus if $W$ does not integrate a given a basis $\{\phi_i\}_{i=1}^{\dim \L}$ of $\L$ entirely, we can iterate one of these two processes to find orthonormal eigenbases for $\L$ such that $W$ integrates $j$ eigenvectors for $j = 0,\ldots, \dim \L-1$.}
\end{proof}
In light of this lemma, we conclude that Definitions 1.2 and 1.3, which define $k$-graphical designs, optimal designs, and efficacy in terms of eigenspaces, are sensible. There is too much flexibility when choosing the basis of an eigenspace.
\begin{example} We illustrate eigenspace multiplicity with $K_5$, the complete graph on 5 vertices. For $K_5$, $L$ has only one nontrivial 4-dimensional eigenspace $\L= \{ x: \ones^T x =0\}$. The subset $\{1\}$ does not integrate $\L$.  By Lemma \ref{change of basis}, there are bases of $\L$ such that $\{1\}$ integrates $0,1,2$ and $3$ eigenvectors in the basis, which we exhibit below. The columns of the following matrices form orthogonal bases for $\L$.  The subset $\{1\}$ integrates an eigenvector $\phi\in \L$ if and only if $\phi(1)=0$.  For $i =0,\ldots, 3$, $\{1\} $ integrates precisely the first $i$ columns of $U_i$.  
 \begin{align*}
&U_0 = \begin{bmatrix}  
   -2 & -1 & -1& -1\\
    \phantom{-}1 & -2& \phantom{-}5& -1\\
   \phantom{-}1& \phantom{-}0&- 7& -1\\
   \phantom{-}0 & \phantom{-}3&\phantom{-}3 & -1\\
    \phantom{-}0& \phantom{-}0& \phantom{-}0& \phantom{-}4\\\end{bmatrix}, 
U_1 = \begin{bmatrix} 
   \phantom{-}0 & \phantom{-}1& -1  &\phantom{-} 1  \\
   \phantom{-}0 &\phantom{-}1 &\phantom{-} 1 & \phantom{-}1   \\
  \phantom{-}0 & \phantom{-}0& \phantom{-} 0& -4 \\
    \phantom{-}1&-1 & \phantom{-}0 &\phantom{-}1  \\
   -1 &  -1& \phantom{-}0 & \phantom{-}1 \\\end{bmatrix}, \\
&U_2 = \begin{bmatrix}  
     \phantom{-}0&\phantom{-}0 & -2 & -2 \\
    \phantom{-}0& -1&\phantom{-}1 & -2\\
    \phantom{-}0& \phantom{-}1 & \phantom{-}1& -2\\
  \phantom{-}1&\phantom{-}0 & \phantom{-}0& \phantom{-}3\\
   -1& \phantom{-}0& \phantom{-}0& \phantom{-}3\\\end{bmatrix}, 
U_3 = \begin{bmatrix}  
   \phantom{-}0& \phantom{-}0 & \phantom{-}0& -4 \\
  \phantom{-}0& -1& -1& \phantom{-}1 \\
   \phantom{-}0& \phantom{-}1 & -1&  \phantom{-}1\\
   \phantom{-}1&\phantom{-}0 &\phantom{-}1 & \phantom{-} 1\\
    -1& \phantom{-}0& \phantom{-}1& \phantom{-} 1\\\end{bmatrix}.
    \end{align*}

\end{example}

\section{The $d$-Dimensional Cube}

In this section we show that linear codes can provide effective graphical designs on the graphs of cubes, and that constructions from the Hamming code are particularly effective. We denote by $Q_d$ the graph of the $d$-cube, which has vertex set $\{0,1\}^d$ and an edge between vertices $v$ and $w$ if they differ in exactly one coordinate. Later we will build graphs on $\{0,1\}^d$ with other types of edges. 
 
 \begin{figure}[h!]
\begin{align*}
& \begin{tikzpicture}[baseline={($ (current bounding box.west) - (0,1ex) $)}, scale = .7]
\draw[thick] (0,0) -- (3,0) -- (3,3) -- (0,3) -- (0,0) -- (1.6,.8) -- (4.6,.8) -- (4.6,3.8) -- (1.6,3.8) -- (1.6,.8);
\draw[thick] (3,0)  -- (4.6,.8);
\draw[thick] (3,3)  -- (4.6,3.8);
\draw[thick] (0,3)  -- (1.6,3.8);
\draw  (0,0)  node[circle, fill = white,inner sep=2pt,draw] {000};
\draw  (3,0)  node[circle, fill = white,inner sep=2pt,draw] {100};
\draw  (0,3)  node[circle, fill = white,inner sep=2pt,draw] {010};
\draw  (3,3)  node[circle, fill = white,inner sep=2pt,draw] {110};
\draw  (1.6,.8)  node[circle, fill = white,inner sep=2pt,draw] {001};
\draw  (4.6,.8)  node[circle, fill = white,inner sep=2pt,draw] {101};
\draw  (1.6,3.8)  node[circle, fill = white,inner sep=2pt,draw] {011};
\draw  (4.6,3.8)  node[circle, fill = white,inner sep=2pt,draw] {111};
\end{tikzpicture}
&& \begin{bmatrix}  
1 & \phantom{-}1 & \phantom{-}1 & \phantom{-}1& \phantom{-}1 & \phantom{-}1 & \phantom{-}1 & \phantom{-}1 \\
1 & -1 & \phantom{-}1 & \phantom{-}1& -1 & -1 & \phantom{-}1 & -1 \\
1 & \phantom{-}1 & -1 & \phantom{-}1& -1 & \phantom{-}1 & -1 & -1 \\
1 & \phantom{-}1 & \phantom{-}1 & -1& \phantom{-}1 & -1 & -1 & -1 \\
1 & -1 & -1 &\phantom{-}1&\phantom{-}1 & -1 & -1 & \phantom{-}1 \\
1 & -1 & \phantom{-}1 &-1& -1 & \phantom{-}1 & -1 & \phantom{-}1 \\
1 & \phantom{-}1 & -1 & -1& -1 & -1 & \phantom{-}1 & \phantom{-}1 \\
1 & -1 &-1 & -1& \phantom{-}1 & \phantom{-}1 & \phantom{-}1 &-1 
    \end{bmatrix}
\end{align*}
\caption{\textcolor{cyan}{The graph $Q_3$ and a matrix of its eigenvectors for $L= AD^{-1} - I$. The $i$-th row and $i$-th column are indexed by the vertex which is the binary expansion of $i-1$. The first column spans $\L_0$ with eigenvalue 0, columns 2, 3, and 4 span $\L_1$ with eigenvalue $-2/3$, columns 5, 6, and 7 span $\L_2$ with eigenvalue $-4/3$, and column 8 spans $\L_3$ with eigenvalue $-2$. }}
\label{fig:cubes}
\end{figure}
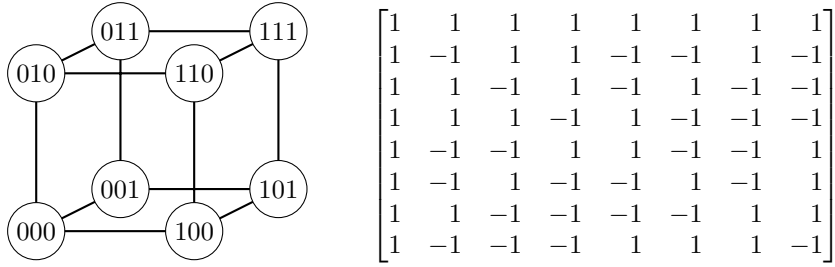

The graph $Q_d$ is a Cayley graph, where the underlying group  is $\{0,1\}^d$ with addition mod 2, and the generating set is $\{e_i\}_{i=1}^d$, the standard basis vectors. For more on Cayley graphs and their spectra, see \cite[Chapter 6]{BrouwerHaemers}; we also provide a brief discussion before Theorem \ref{H2 optimal}. The graph $Q_0$ is trivial and $Q_1$ is non-regular, so we consider $d \geq 2$, where $Q_d$ is $d$-regular. From the Cayley graph structure, we can derive the eigenvectors and eigenvalues of $Q_d$ using the group characters of $\{0,1\}^d$. For more on the representation of finite groups, we recommend \cite[Chapter 1]{Sagan}.  Each $v \in \{0,1\}^d$ gives an eigenvector $\phi_v(x) = (-1)^{v^Tx}$ with eigenvalue $(1/d) \sum_{i=1}^d  (-1)^{v_i} -1.$ To determine the frequency ordering, we introduce the Hamming distance.

\begin{definition}[see \cite{ECC}]
\emph{The\emph{ Hamming distance} $d_H$ between two vectors $v$ and $w$ in $\{0,1\}^d$ is the number of coordinates they differ on. That is,
$$ d_H(v,w) =  \sum_{i=1}^d | v_i-w_i |.$$
The\emph{ weight }of $v$ is $wt(v) = d_H(v,0) = \ones^T v$, which is the number of ones in $v$.}
\end{definition}
\textcolor{cyan}{ To better reflect the structure of the spectrum of $Q_d$, we make a slight change of notation. We have distinct eigenspaces $\L_i =\spanset \{ \phi_v: wt(v) = i\}$ with multiplicity $\genfrac(){0 pt}{1}{d}{i}$ and eigenvalue $-2i/d$ for each $i = 0,\ldots,d$. The ordering of eigenspaces by frequency for $d$ odd is then 
$$\L_0 \equiv \L_d <  \ldots <\L_i \equiv \L_{d-i} <  \ldots < \L_{\lceil d/2 \rceil} \equiv \L_{\lceil d/2 \rceil -1},$$
where $\L_i<\L_j$ denotes that the eigenvalue of $\L_i$ has a strictly lower frequency than that of $\L_j,$ and $\L_i\equiv
\L_j$ denotes eigenspaces which are interchangeable in the frequency ordering.  If $d$ is even, the eigenspace ordering is
$$\L_0 \equiv \L_d <  \ldots <\L_i \equiv \L_{d-i} <  \ldots < \L_{d/2} .$$}
 See Figure \ref{fig:cubes} for $Q_3$ and its eigenspaces.

\begin{subsection}{Preliminary Results }

We first provide some simple graphical designs on $Q_d$.  A regular graph has a trivial one-dimensional eigenspace with eigenvalue $0$ spanned by $\ones $.  This agrees with the Cayley graph structure: $\phi_0(x) = (-1)^{x^T0} = 1$ for all $x\in \{0,1\}^d$.
Any $ W \subset \{0,1\}^d$ integrates $\ones$. We will often use the following fact.
\begin{lemma}\label{integrate reg means =0}
\emph{Let $G=(V,E)$ be regular. A subset $W\subset V$ integrates a nontrivial eigenvector $\phi$ \textcolor{cyan}{if and only} if $\phi^T \ones_W =0$.}
\end{lemma}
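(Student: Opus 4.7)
The plan is to exploit the fact that for a regular graph, the Laplacian $L = AD^{-1} - I$ simplifies to $(1/d)A - I$, which is a symmetric matrix. Symmetry ensures that eigenvectors belonging to distinct eigenspaces are orthogonal, so the entire orthogonality structure of $\RR^V$ is available.

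First I would recall that any regular graph has $\ones$ as an eigenvector of $L$ with eigenvalue $0$, spanning the trivial eigenspace. By ``nontrivial eigenvector,'' I take $\phi$ to mean an eigenvector not in $\spanset(\ones)$, i.e., one associated with a nonzero eigenvalue. By the symmetry of $L$ above, such a $\phi$ lies in an eigenspace orthogonal to $\spanset(\ones)$, so $\phi^T \ones = 0$.

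Next I would rewrite the integration condition from Definition 1.1 in vector form. The condition that $W$ integrates $\phi$ is
\[
\frac{1}{|W|} \phi^T \ones_W \;=\; \frac{1}{|V|} \phi^T \ones.
\]
Since the right-hand side vanishes by the orthogonality step above, the condition reduces to $\phi^T \ones_W = 0$, which is exactly the hypothesis.

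There is no real obstacle here; the lemma is essentially a reformulation of the integration condition once one observes that $\phi \perp \ones$ for nontrivial eigenvectors in the regular case. The only subtlety worth flagging in the write-up is making explicit why $L$ is symmetric for regular graphs (so that distinct eigenspaces really are orthogonal), since the paper otherwise works with the non-symmetric form $AD^{-1} - I$.
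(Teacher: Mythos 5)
Your proposal is correct and follows essentially the same route as the paper: observe that $\ones$ spans an eigenspace of $L$ for a regular graph, conclude $\phi^T\ones = 0$ for a nontrivial eigenvector by orthogonality of eigenspaces, and then note that the integration condition reduces to $\phi^T\ones_W = 0$. Your explicit remark that $L = (1/d)A - I$ is symmetric in the regular case is a reasonable justification of the orthogonality step that the paper takes for granted.
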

\begin{proof}
Since $G$ is regular, $\spanset \{\ones\}$ is an eigenspace of $L$.
Since eigenspaces are orthogonal, every nontrivial eigenvector $\phi$ is such that $\phi^T \ones =0$. \textcolor{cyan}{Thus $W$ integrates $\phi$ if and only if $$ \frac{1}{|W|}\phi^T\ones_W =\frac{1}{|W|}\sum_{w\in W} \phi(w) = \frac{1}{|V|}\sum_{v\in V} \phi(v) =\frac{1}{|V|}\phi^T \ones =0.$$}
\end{proof}
\noindent
The first nontrivial eigenspace by frequency is $\L_d=\spanset\{\phi_\ones\}$ with eigenvalue $-2$.
\begin{lemma}
\emph{A subset $W\subset \{0,1\}^d$ integrates the eigenspace $\L_d=\spanset\{\phi_\ones\}$ of $Q_d$ if and only if it has an equal number of even and odd weight vertices.} \label{cube lambda n}
\end{lemma}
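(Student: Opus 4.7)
The plan is to reduce the claim to a direct counting argument via Lemma \ref{integrate reg means =0}. Since $Q_n$ is $n$-regular for $n \geq 2$ and $\Lambda_n$ is a nontrivial eigenspace spanned by the single character $\chi_\ones$, Lemma \ref{integrate reg means =0} tells us that $W$ integrates $\Lambda_n$ if and only if $\chi_\ones^T \ones_W = 0$.

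Next, I would unpack $\chi_\ones$ using the Cayley-graph formula already stated: $\chi_\ones(x) = (-1)^{\ones^T x} = (-1)^{wt(x)}$. Thus $\chi_\ones(x) = +1$ precisely when $x$ has even weight and $\chi_\ones(x) = -1$ precisely when $x$ has odd weight. Splitting the sum according to parity of weight gives
\[
\chi_\ones^T \ones_W = \sum_{w \in W} (-1)^{wt(w)} = |\{w \in W : wt(w) \text{ even}\}| - |\{w \in W : wt(w) \text{ odd}\}|.
\]

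Combining the two observations, $W$ integrates $\Lambda_n$ if and only if this difference vanishes, i.e.\ if and only if $W$ contains equally many even-weight and odd-weight vertices. Since $\dim \Lambda_n = 1$, integrating the spanning vector $\chi_\ones$ is the same as integrating the full eigenspace, so no further work with bases is required. I do not anticipate any real obstacle here; the only mild care needed is to cite Lemma \ref{integrate reg means =0} correctly to avoid re-deriving the fact that the mean of $\chi_\ones$ over all of $\{0,1\}^n$ is zero (which, of course, is also immediate from $\ones^T \chi_\ones = 0$ since $\chi_\ones \perp \ones$).
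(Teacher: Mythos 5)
Your proposal is correct and matches the paper's argument essentially verbatim: both reduce integration of $\Lambda_n$ to the condition $\chi_\ones^T \ones_W = 0$ (via the regularity of $Q_n$ and orthogonality of $\chi_\ones$ to $\ones$) and then split the sum $\sum_{w\in W}(-1)^{wt(w)}$ by parity of weight. No gaps; the observation that $\dim\Lambda_n = 1$ makes the eigenvector/eigenspace distinction moot is a nice touch but not needed beyond what the paper already does.
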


\begin{proof}
A subset $W\subset\{0,1\}^d$ satisfies $$0=\phi_\ones^T\ones_W = \sum_{w\in W } (-1)^{\ones^T w} =   \sum_{\substack{w\in W \\ \ones^Tw \text{ is even} }} 1 - \sum_{\substack{ w\in W \\ \ones^Tw \text{ is odd}}} 1$$ if and only if the number of even and odd weight vertices in $W$ are equal.
\end{proof}
The next eigenspace by frequency is $\Lambda_1 = \spanset\{\phi_{e_i}\}_{i=1}^{d}$. The following lemma addresses all odd eigenspaces. We abbreviate the subset $\{1,\ldots, d\}$ as $[d]$.

\begin{lemma}
\emph{If $W\subset \{0,1\}^d $ is such that $w\in W \iff \ones -w \in W$ and $i \in [d]$ is odd, then $W$ integrates the eigenspace $\Lambda_i$ of $Q_d$.}\label{cube odd eigspaces}
\end{lemma}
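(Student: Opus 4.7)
The plan is to apply Lemma \ref{integrate reg means =0}: since $Q_n$ is regular, it suffices to show that for every character $\chi_a$ spanning $\Lambda_i$ (i.e.\ every $a \in \{0,1\}^n$ with $wt(a) = i$), we have $\chi_a^T \ones_W = 0$. Writing this out, we must show that $\sum_{x \in W} (-1)^{a^T x} = 0$ whenever $wt(a)$ is odd and $W$ is closed under the complementation involution $\sigma: x \mapsto \ones - x$.

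The heart of the argument is a sign-flip identity: for any $x \in \{0,1\}^n$,
\[
(-1)^{a^T(\ones - x)} = (-1)^{a^T \ones - a^T x} = (-1)^{wt(a)}\,(-1)^{a^T x},
\]
so when $wt(a) = i$ is odd we get $\chi_a(\sigma(x)) = -\chi_a(x)$. Next I would observe that $\sigma$ is a fixed-point free involution on $\{0,1\}^n$ (the equation $x = \ones - x$ has no solution in $\{0,1\}^n$), hence $\sigma$ restricted to $W$ pairs the vertices of $W$ into two-element orbits. Summing $\chi_a$ over each such pair $\{x, \sigma(x)\}$ gives $\chi_a(x) + \chi_a(\sigma(x)) = 0$, and adding over all orbits yields $\chi_a^T \ones_W = 0$, as required.

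There is no real obstacle here; the only thing to be careful about is not conflating parity of $a^T x$ as an integer with its value mod $2$, and to verify that $\sigma$ has no fixed points so that the pairing argument genuinely covers all of $W$ (rather than leaving an unpaired singleton whose contribution would not cancel). Once these two points are in place, Lemma \ref{integrate reg means =0} immediately promotes the vanishing of $\chi_a^T \ones_W$ on the basis vectors of $\Lambda_i$ to integration of the full eigenspace.
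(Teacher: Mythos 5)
Your proposal is correct and is essentially the paper's own argument: both reduce to showing $\chi_a^T\ones_W=0$ via Lemma \ref{integrate reg means =0} and then exploit the fact that $x\mapsto \ones-x$ flips the parity of $a^Tx$ when $wt(a)$ is odd. The paper packages this as a bijection between the even- and odd-parity vertices of $W$, while you package it as cancellation over the two-element orbits of the involution; these are the same computation, and your extra check that the involution is fixed-point free is a harmless (indeed automatic) refinement.
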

\begin{proof}
 Let $I\subseteq [d]$ be an index set of odd size, and let $e_I= \sum_{i\in I} e_i$ be its indicator vector. To integrate $\phi_{e_I} $, we need to show that
  $$0 = \phi_{e_I}^T \ones_W = \sum_{w\in W} (-1)^{e_I^T w}  = \sum_{\substack{w\in W \\ e_I^T w \text{ is even}}} 1 -  \sum_{\substack{w\in W \\ e_I^T w \text{ is odd}}} 1. $$
 It then suffices to find a bijection between the sets
 $$\left\{w\in W:e_I^T w \text{ is even}\right\} \text{ and } \left\{w\in W:e_I^T w\text{ is odd} \right\} .$$
If $|I|$ is odd, then $e_I^T (\ones -w) \mod 2 = 1 - e_I^T w  \mod 2,$
and so $e_I^T w$ and $e_I^T (\ones -w)$ have opposite parity. Thus mapping $w \mapsto \ones - w$ provides the desired bijection.
\end{proof}
As a consequence, for any $d$, we can find some very small graphical designs which are moderately effective.  More precisely,

\begin{lemma} \label{simple cube designs}
\emph{ If $d$ is odd, the subset $\{0, \ones\}$ is a 3-design, and $$ \eff(\{0,\ones\})= 2/(d+2) .$$  If $d>2$ is even, a subset such as $W = \left\{e_1, e_1+e_2, \ones - e_1 , \ones - e_1 -e_2\right\}$ is at least a 4-design, and $$ \eff(W) \leq 4/(2d+2).$$  \textcolor{cyan}{Moreover, a 4-design must contain at least 4 vertices for any value of $d$.}}
\end{lemma}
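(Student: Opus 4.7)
The plan is to directly verify the claims by using Lemmas \ref{cube lambda n} and \ref{cube odd eigspaces} to check which eigenspaces each proposed subset integrates, and then compare against the frequency ordering derived earlier in the section. Recall that the distinct eigenspaces of $Q_n$ are $\L_0, \L_1, \ldots, \L_n$ with eigenvalues $\l_i = -2i/n$, so the frequency ordering groups eigenspaces into tied pairs: $\{\L_0,\L_n\}$ at the top (both with $|\l+1|=1$), then $\{\L_1,\L_{n-1}\}$, then $\{\L_2,\L_{n-2}\}$, and so on, with ties broken by taking the eigenvalue closer to $0$ first.

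For the odd case, I would take $W = \{0, \ones\}$ and note that $\L_0$ is integrated trivially (any subset integrates the all-ones vector). Since $n$ is odd, $0$ and $\ones$ have opposite parity weights, so Lemma \ref{cube lambda n} shows $W$ integrates $\L_n$. Because $W = \ones - W$, Lemma \ref{cube odd eigspaces} applies with $i=1$ to give integration of $\L_1$. To see that $W$ is no better than a 3-design in the frequency ordering, compute $\chi_a^T\ones_W = 1 + (-1)^{wt(a)}$ for $a$ with $wt(a) = n-1$; since $n-1$ is even, this equals $2$, so $W$ fails to integrate $\L_{n-1}$, which is the next eigenspace in the frequency ordering. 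This gives $\sum_{i} \dim \L_i = 1 + 1 + n = n+2$ over the three integrated eigenspaces, so $\eff(W) = 2/(n+2)$.

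For the even case with $W = \{e_1, e_1+e_2, \ones-e_1, \ones-e_1-e_2\}$, I would verify in order: $\L_0$ is trivially integrated; the weights of the four elements are $1,2,n-1,n-2$, which since $n$ is even split as two odd and two even, so Lemma \ref{cube lambda n} gives $\L_n$; $W$ is invariant under $x\mapsto \ones - x$, so Lemma \ref{cube odd eigspaces} applies to every odd $i$, giving $\L_1$ and (since $n-1$ is odd) $\L_{n-1}$. These are precisely the first four eigenspaces in the frequency ordering, so $W$ is at least a 4-design. The sum of dimensions is $1+1+n+n = 2n+2$, yielding $\eff(W) \le 4/(2n+2)$; the inequality (rather than equality) arises because $W$ might happen to integrate further eigenspaces down the ordering, but a 4-design's efficacy bound comes from just these four.

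The main bookkeeping hurdle is making sure the verification respects the tiebreaking convention of the frequency ordering: at each level $\{\L_i, \L_{n-i}\}$, both eigenspaces must be integrated to move on to the next level. The odd case illustrates this precisely, where $\L_1$ is integrated but $\L_{n-1}$ is not, which is why the count stops at three rather than four. Otherwise the proof is a series of direct character-sum computations handled by the two preceding lemmas.
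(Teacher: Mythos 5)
Your proof is correct and follows essentially the same route as the paper's: apply Lemmas \ref{cube lambda n} and \ref{cube odd eigspaces} to verify integration of $\L_0$, $\L_n$, $\L_1$ (and $\L_{n-1}$ in the even case), together with a direct character-sum computation showing $\{0,\ones\}$ fails to integrate $\L_{n-1}$ when $n$ is odd, and the same handling of the tied pairs in the frequency ordering. The only substantive difference is that the paper also proves four vertices is the \emph{minimum} size of a subset integrating $\L_0,\L_1,\L_n$ when $n$ is even (a fact reused later for Theorem \ref{lifted optimal}), but that is not required by the lemma as stated, so its omission is not a gap.
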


\begin{proof}
\textcolor{cyan}{Recall that $\dim{\L_0} = \dim{\L_d} =1$, and $\dim{\L_1} = \dim{\L_{d-1}} =d$. By Lemma \ref{cube lambda n}, a design which integrates $\L_d$ must have an even number of vertices. We claim that a subset of two vertices integrates $\L_1$ if and only if it is of the form $\{v, \ones -v\}$. Lemma \ref{cube odd eigspaces} shows one direction of this statement. Suppose that $\{v,w\} $ integrates $\L_1$.  Then for each $i=1,\ldots, d$, 
$$ 0 = \phi_{e_i}(v) + \phi_{e_i}(w) = (-1)^{v_i} + (-1)^{w_i}.$$ Hence $w_i = 1-v_i$ for each $i$, and so $w = \ones - v$. This ends the proof of the claim.}

\textcolor{cyan}{Suppose $d>1$ is odd.  Then $\{v,\ones-v\}$ integrates $\L_0, \L_d,$ and $\L_1$ by Lemma \ref{cube odd eigspaces}. Additionally, since $wt(\ones - e_i)$ is even, $$\phi_{\ones - e_i}(\ones - v) = \phi_{\ones - e_i}(\ones) \phi_{\ones - e_i}(v) =\phi_{\ones - e_i}(v).$$ Hence $\phi_{\ones - e_i}(v) + \phi_{\ones - e_i}(\ones - v) = 2\phi_{\ones - e_i}(v) \neq 0$ for any $v$, and so $\{v,\ones-v\}$ cannot integrate $\L_{d-1}$. Thus a 4-graphical design on $Q_d$ must contain at least 4 vertices. }

\textcolor{cyan}{Now, let $d>2$ be even. Then $v$ and $\ones-v$ have the same parity and hence cannot integrate $\L_d$.  Thus a design which integrates $\L_0, \L_1,$ and $\L_d$ contains at least 4 vertices.  An example of a minimal subset which integrates $\L_0, \L_1,$ and $\L_d$ is  $W = \left\{e_1, e_1+e_2, \ones - e_1 , \ones - e_1 -e_2\right\}$. $W$ has two odd weight vectors, two even weight vectors, and $w\in W \iff \ones -w \in W$.  Thus by Lemmas \ref{cube lambda n} and \ref{cube odd eigspaces}, $W$ integrates $\Lambda_0, \Lambda_d, $ and $\Lambda_1,$ as well as  $\Lambda_{d-1}$ since $d-1$ is odd.}
\end{proof}

We note that these subsets remain fixed no matter how large $d$ is.
\end{subsection}

\begin{subsection}{Linear Codes as Designs}
 We begin with a little background on codes.  For a more complete exposition, see Chapter 1 of \cite{ECC}, for instance.  A \emph{code} is a subset $C \subset \{0,1\}^d$, where $\{0,1\}^d$ is the set of all \emph{words}.  An element $c\in C$ is called a \emph{codeword}. We will stick with the case of transmitting bits, but one can also consider codes on a larger ``alphabet" than $\{0,1\}$.  The \emph{distance} of a code is $$\text{dist}(C) = \underset{c\neq c' \in C}{\min} d_H(c,c') .$$  

A \emph{linear code} is a linear subspace of $\{0,1\}^d$, viewed as a vector space with addition mod 2. It is quick to check that the distance of a linear code is the minimum weight of a nonzero codeword. Every linear code $C$ can be described as the kernel of a \emph{check matrix} $M$:  $$C = \{v \in \{0,1\}^d: Mv =0\}.$$   If $M$ is the $K \times d$ check matrix for a code $C$, then $C$ is an $(d-K)$-dimensional vector space over $\{0,1\}$, and the row span of $M$ is a $K$-dimensional vector space over $\{0,1\}$. We will also need the concept of dual codes.  

\begin{definition}[see Section 1.8 of \cite{ECC}]
\emph{Let $C \subset \{0,1\}^d$ be a linear code with check matrix $M$. The \emph{dual code} of $C$ is  $$C^\perp = \{ v \in \{0,1\}^d: v^Tc =0 : \forall c\in C\}. $$  In other words, $C^\perp$ is the row span of $M$ in $\{0,1\}^d$.}
\end{definition}

 A good code is both spread out among the vertices of $Q_d$ and in some sense near all the other vertices, which aligns with what we  desire from a graphical design. So, we investigate linear codes as designs on $Q_d$. We will use the following technique.

\begin{lemma}[\cite{Golubev}]
\emph{\textcolor{cyan}{Let $G=(V,E)$ be regular and let $\{\phi_1 = \ones,\ldots, \phi_n\}$ be an orthogonal basis of eigenvectors for $L$. If $W\subset V$ is such that $\ones_W$ can be written as a linear combination of the eigenvectors $\ones, \phi_{j_1},\ldots, \phi_{j_N}$, then $W$ integrates the eigenvectors other than $\phi_{j_1},\ldots, \phi_{j_N}$.}} \label{proof technique}
\end{lemma}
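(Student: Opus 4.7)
The plan is to exploit the orthogonality of eigenvectors of the graph Laplacian together with the earlier Lemma \ref{integrate reg means =0}. First I would observe that since $G$ is $d$-regular, $L = AD^{-1}-I = (1/d)A - I$ is symmetric, so eigenvectors of $L$ corresponding to distinct eigenvalues are orthogonal, and within each eigenspace we may choose an orthogonal basis. Hence $\{\ones, \phi_{n_1}, \ldots, \phi_{n_k}\}$ can be extended to an orthogonal eigenbasis $\mathcal{B}$ of $L$.

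Next I would invoke Lemma \ref{integrate reg means =0}, which tells us that in a regular graph, $W$ integrates a nontrivial eigenvector $\phi$ if and only if $\phi^T \ones_W = 0$. So the task reduces to showing $\phi^T \ones_W = 0$ for every $\phi \in \mathcal{B}\setminus\{\ones, \phi_{n_1},\ldots,\phi_{n_k}\}$. By hypothesis, $\ones_W$ lies in $\spanset\{\ones, \phi_{n_1}, \ldots, \phi_{n_k}\}$, and by construction $\phi$ is orthogonal to each of these vectors. Therefore $\phi \perp \ones_W$, and the conclusion is immediate.

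The only real subtlety is interpretive rather than computational: the statement implicitly refers to some orthogonal completion of $\{\ones, \phi_{n_1}, \ldots, \phi_{n_k}\}$ to an eigenbasis, and a priori different completions could give different vectors. This ambiguity is harmless because, by Lemma \ref{change of basis}, if $W$ integrates one orthogonal basis of an eigenspace $\L$ of $L$, then $W$ integrates \emph{every} basis of $\L$. So as long as each eigenspace of $L$ has its $\{\phi_{n_i}\}$-part spanning what it is supposed to, the remaining part of the eigenspace is integrated in any basis. No calculation beyond invoking orthogonality is needed, so I do not anticipate an obstacle beyond writing the argument cleanly.
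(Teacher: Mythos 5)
Your proof is correct and follows essentially the same route as the paper: complete $\{\ones,\phi_{n_1},\ldots,\phi_{n_k}\}$ to an orthogonal eigenbasis, reduce to showing $\phi^T\ones_W=0$ via Lemma \ref{integrate reg means =0}, and conclude by orthogonality. The extra remark about basis-independence via Lemma \ref{change of basis} is a harmless addition the paper does not bother to make.
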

\begin{proof}
Suppose $\ones_W = \a_0\ones + \sum_{i=1}^N \a_i\phi_{j_i}.$  Let $j \notin \{1, j_1,\ldots, j_N\}$.  Since $G$ is regular, Lemma \ref{integrate reg means =0} gives us that $W$ integrates $\phi_j$ if $\phi_j^T\ones_W=0$. Then 
$$ \phi_j^T \ones_W = \phi_j^T \left(\a_0\ones + \sum_{i=1}^N \a_i\phi_{j_i}\right)=0, $$
by orthogonality. Thus $W$ integrates $\phi_j$.
\end{proof}

Note that being able to express $\ones_W$ as $ 
\a_0\ones + \sum \a_i\phi_{j_i}$ does not necessarily preclude $W$ from integrating some of $\{\phi_{j_i}\}_{i=1}^N$. 

\begin{theorem}
\emph{\textcolor{cyan}{Let $C = \{x: Mx =0\}$ be a linear code in $\{0,1\}^d$, where $M$ is a $K \times d$ matrix. $C$ integrates $\phi_v$ if and only if $v \notin C^{\perp}$ or $v$ is the zero vector. }
\label{lincode thm}}
\end{theorem}

\begin{proof}
    Let $C$ be a linear code with check matrix $M$, which has rows $a_i$, $i =1,\ldots, K.$  For $I \subseteq [K]$, denote $a_I =\sum_{i\in I} a_i.$  We take $a_\vn = 0$, the all-zeros vector. \textcolor{cyan}{Since $C^\perp$ is the rowspan of $M$, $C^{\perp}= \{a_I: I\subseteq K\}.$} By Lemma \ref{proof technique},  showing $$\ones_C = \frac{1}{2^K} \sum_{I\subseteq [K]} \phi_{a_I} $$ proves that if $v \notin C^\perp$, then $C$ integrates $\phi_v$. 
Let $c\in C$. Then $a_i^Tc =0$ for each $i=1,\ldots, K$.  Hence $\phi_{a_I}(c)= 1 \text{ for each } I\subseteq [K].$  There are $2^K$ subsets of $[K]$, so $$\frac{1}{2^K} \sum_{I\subseteq [K]} \phi_{a_I}(c) =1. $$
Let $v\notin C$.  Then there is a row $a_j$ such that $a_j^Tv =1$. \textcolor{cyan}{Then, consider all subsets $I\subseteq [K]$ such that $j\notin I$. We see that}
$$\phi_{a_{I \cup j}}(v) =  \phi_{a_j + a_I}(v) = \phi_{a_j}(v)\phi_{a_I}(v) = -\phi_{a_I}(v).$$  
Therefore, \textcolor{cyan}{by pairing off the eigenvectors $\phi_{a_I}$ and $\phi_{a_{I \cup j}}$, we have}
$$ \sum_{I\subseteq [K]} \phi_{a_I}(v) =  \sum_{\substack{I\subseteq [K] \\ j \notin I}} \left(\phi_{a_I}(v) + \phi_{a_{I \cup j}}(v)\right) = 0.$$

\textcolor{cyan}{To show the other direction, we first note that the eigenvectors $\phi_v$ are orthogonal. If $v,w \in\{0,1\}^d$ are distinct, then $$\phi_v^T\phi_w = \sum_{x\in\{0,1\}^d}  \phi_v(x) \phi_w(x) =\sum_{x\in\{0,1\}^d}  \phi_{v+w}(x) = \ones^T\phi_{v+w} = 0$$ since all nontrivial eigenspaces are orthogonal to $\ones$.  Now, suppose $v \in C^\perp$ is nonzero. Then, $v = a_{J}$ for some $\vn \neq J \subseteq [K]$.  By orthogonality, we see that
\begin{align*}
    \phi_{a_{J}}^T \ones_C =   \phi_{a_{J}}^T \left(\frac{1}{2^K} \sum_{I\subseteq [K]} \phi_{a_I} \right)= \frac{1}{2^K}  \phi_{a_{J}}^T \phi_{a_{J}} = \frac{1}{2^K} \|\phi_{a_{J}}\|^2 \neq 0.
\end{align*}
Hence $C$ does not integrate $\phi_v$.}
\end{proof}
The dual code may contain eigenvectors of any weight, so the unintegrable eigenvectors may come anywhere in the frequency ordering. Thus we seek linear codes where all rows of $M$ are near $d/2$ in weight, so that the unintegrable eigenspaces are as far out in the frequency ordering as possible.  
\end{subsection}

\begin{subsection}{Hamming Codes}
In this section, we look at Hamming codes and extensions of them as designs. Hamming codes are linear codes first introduced by Hamming in \cite{HammingOG} which are built on the idea of parity checking.  For each $r\geq 2$, there is a Hamming code $H_r$ in the $(2^r -1)$-cube.   $H_r$ is a vector subspace of dimension $2^r-r-1$, and $\dist(H_r) =3$. The cardinality of $H_r $ is $2^{2^r-r-1}$. We again refer the reader to  Chapter 1 of \cite{ECC} for a more complete introduction.

The check matrix $M_r$ of the Hamming code $H_r$ is the $r \times (2^r -1)$ matrix whose columns are the nonzero vertices of the $r$-cube.  We can see this in the check matrix \[M_3= \begin{bmatrix} 
1& 0 & 1 & 0& 1 &0& 1 \\
 0 & 1 & 1 & 0 & 0 & 1 & 1 \\
 0 & 0 & 0 & 1 & 1 & 1 & 1 \\
\end{bmatrix}. \]
The dual code $H_r^\perp$, which we recall is the rowspan of $M_r$, is called the simplex code, so named because its vectors form the vertex set of a regular $(2^r-1)$-simplex. We will use the following facts about the simplex code (see \cite[Section 1.9]{ECC}).
The simplex code in dimension $2^{r}-1$ consists of the zero vector and $2^r-1$ vectors of weight $2^{r-1}$.  For all $v,w\in H_r^\perp,$ $d_H(v,w) = 2^{r-1}$.

\begin{theorem}\label{Main Hamm}
\emph{The Hamming code $H_r$ is a $(2^r-1)$-design on $Q_{2^r-1}$, and }
$$\eff(H_r) = \frac{2^{2^r-r-1}}{2^{2^r-1}- \genfrac{(}{)}{0pt}{1}{2^{r}-1}{2^{r-1}}} \sim \frac{1}{2^r} \text{ asymptotically}.$$
\end{theorem}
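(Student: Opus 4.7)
\medskip
The plan is to invoke Theorem~\ref{lincode thm} to identify which eigenvectors $H_n$ fails to integrate, observe that the simplex-code structure of $H_n^\perp$ confines these to a single eigenspace, check that this eigenspace sits at the end of the frequency ordering, and then compute the efficacy directly.

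First I would apply Theorem~\ref{lincode thm} with $C = H_n$ and $M = M_n$ (so $k = n$). This yields that $H_n$ integrates every eigenvector $\chi_a$ of $Q_{2^n-1}$ except $\chi_a$ for the $2^n - 1$ nonzero elements $a$ of the dual code $H_n^\perp$. By the stated property of the simplex code, every such $a$ has weight exactly $2^{n-1}$. Since $\L_i = \spanset\{\chi_a : wt(a) = i\}$, all unintegrable eigenvectors lie in the single eigenspace $\L_{2^{n-1}}$, and $H_n$ integrates every $\L_i$ with $i \neq 2^{n-1}$.

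Next I would locate $\L_{2^{n-1}}$ in the frequency ordering. The eigenvalue of $\L_i$ is $\l_i = -2i/(2^n-1)$, so $|\l_i + 1| = |2^n - 1 - 2i|/(2^n-1)$ is minimized precisely at $i = 2^{n-1}-1$ and $i = 2^{n-1}$, tied at $1/(2^n-1)$. Since $H_n$ does integrate $\L_{2^{n-1}-1}$, the only unintegrated eigenspace is tied for last, so we may order it strictly last. Therefore $H_n$ integrates the first $2^n-1$ eigenspaces and is a $(2^n-1)$-design.

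For the efficacy, $|H_n| = 2^{2^n-n-1}$ since $H_n$ is a vector subspace of dimension $2^n-n-1$, while the sum of dimensions of the integrated eigenspaces is
\[
\sum_{i=0}^{2^n-1} \binom{2^n-1}{i} - \binom{2^n-1}{2^{n-1}} = 2^{2^n-1} - \binom{2^n-1}{2^{n-1}},
\]
using $\dim \L_i = \binom{2^n-1}{i}$ and the binomial theorem. This gives the stated closed form for $\eff(H_n)$. The asymptotic follows from Stirling's formula: the central binomial coefficient satisfies $\binom{2^n-1}{2^{n-1}} = \cO(2^{2^n-1}/\sqrt{2^n})$, which is negligible compared to $2^{2^n-1}$, so $\eff(H_n) \sim 2^{2^n-n-1}/2^{2^n-1} = 2^{-n}$. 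The only real subtlety is the tie-breaking in the frequency ordering; everything else is an immediate consequence of Theorem~\ref{lincode thm}, the known weight distribution of the simplex code, and standard binomial estimates.
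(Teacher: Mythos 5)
Your proposal is correct and follows essentially the same route as the paper: apply Theorem~\ref{lincode thm}, use the fact that every nonzero element of the simplex code $H_n^\perp$ has weight $2^{n-1}$ to confine the unintegrable eigenvectors to $\L_{2^{n-1}}$, place that eigenspace last in the frequency ordering, and compute the efficacy with the binomial theorem plus a central-binomial-coefficient estimate. Your tie-breaking step is in fact slightly more careful than the paper's: the tie is indeed between $\L_{2^{n-1}-1}$ and $\L_{2^{n-1}}$ as you say (the paper's displayed ordering writes $\L_{2^{n-1}+1}=\L_{2^{n-1}}$, which is the pairing for $Q_{2^n+1}$ rather than $Q_{2^n-1}$), and your observation that $H_n$ integrates the tied partner justifies ordering $\L_{2^{n-1}}$ strictly last.
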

\begin{proof}
 All nonzero vectors \textcolor{cyan}{in $H_r^\perp$ have weight $2^{r-1}$. By Theorem \ref{lincode thm}, any eigenvector which $H_r$ cannot integrate must lie in the eigenspace $\Lambda_{2^{r-1}}$.  Thus $H_r$} integrates all eigenspaces besides $\Lambda_{2^{r-1}}$. Recall that the frequency ordering of $Q_{2^r-1} $ is
$$\L_0 \equiv \L_{2^r-1}< \L_1 \equiv \L_{2^r-2} < \ldots < \L_{2^{r-1}-1}\equiv \L_{2^{r-1}} .$$
 Thus $\Lambda_{2^{r-1}}$ can be ordered last by frequency in $Q_{2^r-1}$.  Recall that the numerator of $\eff(H_r)$ is $|H_r| = 2^{2^r-r-1}$, and since $\dim(\L_{2^{r-1}})=\binom{2^{r}-1}{2^{r-1}}$, we have that $$\sum_{i\neq 2^{r-1}} \dim(\L_i)=2^{2^r-1}- \binom{2^{r}-1}{2^{r-1}} .$$ 
Asymptotically, $$\binom{2^{r}-1}{2^{r-1}}< \binom{2^{r}}{2^{r-1}} \sim \frac{4^{2^{r-1}}}{\sqrt{\pi2^{r-1}}} \ll 2^{2^r-1},$$
thus as $r$ grows, $$\eff(H_r) \sim \frac{2^{2^r-r-1}}{2^{2^r-1}} = \frac{1}{2^r}.$$ 
\end{proof}

\begin{theorem}
\emph{The Hamming code $H_r$ is the smallest linear code in cardinality which integrates all eigenspaces of $Q_{2^r-1}$ except for the eigenspace $\Lambda_{2^{r-1}}$. } \label{Hamm is small}
\end{theorem}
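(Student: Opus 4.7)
The plan is to translate the integration condition into a constant-weight requirement on the dual code $C^\perp$, and then to bound the size of $C^\perp$ using a Plotkin-style counting argument.

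First I would establish the dictionary: a linear code $C \subseteq \{0,1\}^{2^n-1}$ integrates every eigenspace of $Q_{2^n-1}$ except possibly $\L_{2^{n-1}}$ if and only if every nonzero codeword of $C^\perp$ has weight exactly $2^{n-1}$. By Theorem \ref{lincode thm}, $C$ fails to integrate precisely the characters $\chi_a$ for $a \in C^\perp \setminus \{0\}$. Since $\{\chi_a : wt(a) = i\}$ is a basis of $\L_i$, Lemma \ref{change of basis} implies that $C$ integrates $\L_i$ if and only if it integrates each $\chi_a$ with $wt(a) = i$, which happens if and only if no nonzero codeword of $C^\perp$ has weight $i$. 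The dictionary follows.

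Next I would bound $k = \dim C^\perp$ by double-counting. Fix a $k \times (2^n-1)$ generator matrix $G$ of $C^\perp$, so that the codewords are $\{aG : a \in \{0,1\}^k\}$. Summing weights over codewords gives $\sum_{c \in C^\perp} wt(c) = (2^k - 1)\cdot 2^{n-1}$ by the weight condition. Summing instead column by column, for each nonzero column $G^{(j)}$ the functional $a \mapsto a^T G^{(j)}$ is surjective onto $\mathbb{F}_2$, so exactly $2^{k-1}$ codewords have a $1$ in position $j$; letting $r \leq 2^n-1$ denote the number of nonzero columns, the same sum equals $r \cdot 2^{k-1}$. Equating yields $r = (2^k - 1) 2^{n-k}$, and combining with $r \leq 2^n - 1$ simplifies to $k \leq n$. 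Since $|C| \cdot |C^\perp| = 2^{2^n-1}$, this gives $|C| \geq 2^{2^n - n - 1} = |H_n|$, and Theorem \ref{Main Hamm} shows $H_n$ itself attains equality.

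To pin down the word ``the'' in the statement, I would go on to prove uniqueness up to coordinate permutation: if equality holds then $r = 2^n-1$ forces every column of $G$ to be nonzero, and a parallel double-count of $\sum_c \binom{wt(c)}{2}$ shows those $2^n-1$ columns must also be pairwise distinct, hence equal (up to reordering) to the $2^n-1$ nonzero vectors of $\{0,1\}^n$, so $G$ is a column permutation of $M_n$. The main subtlety lies in Step 1: one has to invoke Lemma \ref{change of basis} to pass between the character basis, where Theorem \ref{lincode thm} operates, and the eigenspaces on which the definition of $k$-design is phrased. Once that translation is in hand, the counting argument is elementary.
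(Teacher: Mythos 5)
Your proof is correct, and its core --- the Plotkin-style double count --- is a genuinely different route from the paper's. The paper performs the same reduction you do in Step 1 (via Theorem \ref{lincode thm}, integrating everything but $\L_{2^{n-1}}$ forces every nonzero codeword of $C^\perp$ to have weight $2^{n-1}$), but then argues geometrically: linearity plus constant weight $2^{n-1}$ makes the codewords of $C^\perp$ pairwise equidistant, and the paper invokes two cited facts from discrete geometry --- that $\RR^{2^n-1}$ contains at most $2^n$ equidistant points, and that $2^n$ equidistant points must form a regular simplex --- to rule out $k>n$ and to identify $C^\perp$ with the simplex code when $k=n$. You instead compute $\sum_{c\in C^\perp} wt(c)$ two ways to get $r=(2^k-1)2^{n-k}\le 2^n-1$, hence $k\le n$, and in the equality case compute $\sum_c \binom{wt(c)}{2}$ to force the $2^n-1$ generator columns to be nonzero and pairwise distinct, hence a permutation of the columns of $M_n$; both counts check out. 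Your version is self-contained and purely combinatorial, avoids the appeal to external geometric results, and makes the uniqueness-up-to-relabeling conclusion explicit, at the cost of being somewhat longer than the paper's argument granted its cited facts. Your identification of the Step 1 dictionary (using Lemma \ref{change of basis} to pass between the character basis and the eigenspace formulation) as the main subtlety is also consistent with how the paper implicitly uses that lemma.
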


\begin{proof}
Let $M$ be the $K \times (2^r -1)$ check matrix of an arbitrary linear code $C$.  If $K < r$, then $|C| > |H_r|$. 

Suppose $K =r$ and $C$ integrates all eigenspaces except for $\L_{2^{r-1}}$. Then $C^\perp$, the row span of $M$, consists of $2^r$ vectors of weight $2^{r-1}$.  Since $C^\perp$ is linear, it follows that $C^\perp$ consists of $2^r$ vectors all at distance $2^{r-1}$ from each other. It is a standard result in discrete geometry that $2^r$ equally spaced vertices in dimension $2^r-1$ must form the vertex set of a regular $(2^r-1)$-simplex. Therefore, $C^\perp$ is the simplex code, and so $C =H_r$, up to relabeling of the vertices.

If $K>r,$ then $C^\perp$ consists of more than $2^r$ points. Another standard result in discrete geometry is that the maximum number of equidistant points in $\RR^{2^r-1}$ is $2^r$.  Hence there must be nonzero vectors $u$ and $v$ in $C^\perp$ such that $ wt(u) \neq wt(v)$. Thus $C$ cannot integrate the distinct eigenspaces $\L_{wt(u)}$ and $\L_{wt(v)} $ by Lemma \ref{lincode thm}.
\end{proof}

We show in Theorem \ref{H2 optimal} that the Hamming code $H_2$ is optimal in the sense of Definition 1.3 on $Q_3$, but it is unknown whether any other Hamming codes are optimal. Recall that in $Q_d$, $\dim (\L_j) = \binom{d}{j}$. In light of Theorem \ref{Hamm is small} and the increasing dimension of higher frequency eigenspaces, we suspect the following.
\begin{conjecture}
\emph{The Hamming code $H_r$ is an optimal design on $Q_{2^{r}-1}$.}
\end{conjecture}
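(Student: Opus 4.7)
My plan is to reformulate the conjecture using the Delsarte linear programming framework in the binary Hamming association scheme $H(N, 2)$ with $N = 2^n - 1$. For $W \subset \{0,1\}^N$, let $A_i$ denote the distance distribution and $B_w = \sum_i K_w(i) A_i$ the dual distribution via Krawtchouk polynomials. Then $W$ integrates $\Lambda_w$ if and only if $B_w = 0$, so letting $\sigma$ denote a frequency ordering and $D_k = \sum_{i=1}^k \dim \Lambda_{\sigma(i)}$, the conjecture becomes: whenever $W$ is a $k$-design, $|W|/D_k \geq |H_n|/D_{\max}$ for $D_{\max} = 2^N - \binom{N}{2^{n-1}}$.

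The first and central step is the extreme case $k = 2^n - 1$, where $W$ integrates every $\Lambda_w$ with $w \neq 2^{n-1}$. Here $B_w = 0$ for all $w \notin \{0, 2^{n-1}\}$, and MacWilliams inversion determines each $A_i$ as an affine function of $|W|$:
$$A_i = \binom{N}{i}\frac{|W|}{2^N} + K_i(2^{n-1})\frac{2^N - |W|}{\binom{N}{2^{n-1}}\cdot 2^N}.$$
I aim to show $|W| \geq 2^{N-n}$ by constructing an LP dual certificate: a polynomial $f(x) = \sum_w \hat f(w) K_w(x)$ satisfying $\hat f(0) > 0$, $\hat f(2^{n-1}) \leq 0$, $f(i) \geq 0$ for $i = 1, \ldots, N$, and $f(0)/\hat f(0) = 2^{N-n}$. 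Complementary slackness with the known distance distribution of $H_n$ pins down the locations of the zeros of $f$ (namely the nonzero weights of codewords in $H_n$), and together with the Krawtchouk identity $\sum_i K_{2^{n-1}}(i)\binom{N}{i} f(i) = 0$ this should determine the certificate up to scaling. For $n=2$ the Fourier-support subspace is four-dimensional and one enumerates solutions directly; for $n=3$, Rao's bound combined with the orthogonal-array divisibility $2^{2^{n-1}-1} \mid |W|$ already yields $|W| \geq 16 = |H_3|$; for general $n$ the natural candidate is the polynomial whose roots are the nonzero weights of $H_n$, with extra factors chosen to meet the sign conditions at the remaining weights.

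For smaller $k$ the plan is to restrict and rescale the extremal certificate to the first $k$ frequency-ordered constraints and show that the resulting bound on $|W|$ is still at least $|H_n|\, D_k/D_{\max}$. The monotonicity of LP values in the constraint set and the involutive $w \leftrightarrow N-w$ symmetry of the Krawtchouk system should make this bookkeeping tractable, paralleling the pairing used in the frequency ordering of $Q_N$. I expect the main obstacle to lie in Step 1: Rao's bound and OA-divisibility suffice only for $n \leq 3$, and for $n \geq 4$ the argument must genuinely exploit the high-frequency constraints $B_w = 0$ for $w > 2^{n-1}$, invoking the simplex-code structure of $H_n^\perp$ (the rigidity behind Theorem \ref{Hamm is small}) through its Krawtchouk evaluation. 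Producing and verifying the closed-form certificate for arbitrary $n$ is where the combinatorial depth of the conjecture lies, and constitutes the step most likely to require new ideas.
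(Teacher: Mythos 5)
The statement you are addressing is presented in the paper as a \emph{conjecture}: the paper proves optimality of the Hamming code only for $n=2$ (Theorem \ref{H2 optimal}, via the Hoffman bound applied to the distance graph $Q_3(2)$) and explicitly records the general case as unknown. Your submission is likewise not a proof but a program, and by your own admission its central step is missing. Concretely, the LP dual certificate $f$ that would certify $|W|\geq 2^{N-n}$ for every subset $W$ integrating all eigenspaces except $\Lambda_{2^{n-1}}$ is never constructed for $n\geq 4$: complementary slackness only predicts where the zeros of a hypothetical optimal certificate would lie, it does not produce the polynomial or verify the sign conditions $f(i)\geq 0$ at the remaining weights, and your fallback (Rao's bound for strength $2^{n-1}-1$ together with the divisibility $2^{2^{n-1}-1}\mid |W|$) already falls short at $n=4$, where it gives a bound near $1024$ against the target $|H_4|=2048$. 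Note also that Theorem \ref{Hamm is small} establishes minimality of $H_n$ only among \emph{linear} codes, so the nonlinear lower bound your Step 1 requires is genuinely stronger than anything proved in the paper.

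The second stage of your plan has an independent gap. Optimality in the sense of Definition 1.2 requires $|W|/D_k \geq |H_n|/D_{\max}$ for every $k$ and every $k$-design $W$, where the admissible constraint sets are dictated by the frequency ordering with its ties $\Lambda_i=\Lambda_{N-i}$. ``Restricting and rescaling the extremal certificate'' does not automatically yield $|W|\geq |H_n|\,D_k/D_{\max}$: the LP value is monotone in the constraint set, but there is no a priori reason it should decrease \emph{proportionally} to $D_k$, and these are a separate family of inequalities at every intermediate $k$ (for instance $\{0,\ones\}$ is a $3$-design of size $2$ by Lemma \ref{simple cube designs}, so one must verify $2/(N+2)\geq 2^{N-n}/D_{\max}$ and all of its analogues) that you have not addressed. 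The reformulation in the Delsarte--Krawtchouk language is correct and is a sensible framework, consonant with the distance-graph/stable-set ideas the paper uses for $n=2$, but neither the extremal lower bound nor the interpolation to smaller $k$ is established, so under your approach the conjecture remains open.
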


\end{subsection}

\begin{subsection}{Extensions of Hamming Codes}

Variations of the Hamming code provide effective designs on other cube graphs. We start by lifting $H_r$ to higher dimensions.  Let $M_r$ be the check matrix of the Hamming code $H_r$, and $H_r'$ be the code with check matrix $M_r' = [M_r \ 0]$, where $0$ represents a column of zeros. For instance, for $r=3$, this would yield a linear code on the 8-cube with check matrix
\[M_3'= \begin{bmatrix}
1 & 0 & 1 & 0 & 1 & 0 & 1 & 0 \\
0 & 1 & 1 & 0 & 0 & 1 & 1 & 0 \\
0 & 0 & 0 & 1 & 1 & 1 & 1 & 0 \\
\end{bmatrix}.\]
We can do this again.  Let $H_r''$  be the code with check matrix $M_r'' = [M_r \ 0 \ 0]$.

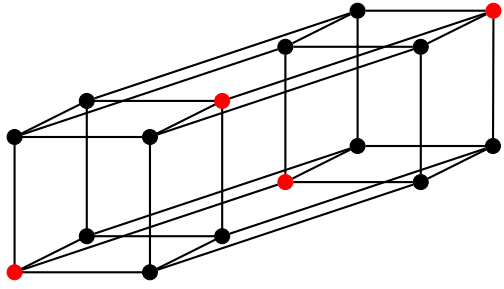
\begin{figure}[h!]
     \centering
\begin{tikzpicture}[scale =.6]
\tikzstyle{bk}=[fill,circle, draw=black, inner sep=2 pt]
\tikzstyle{red}=[fill =red,circle, draw=red, inner sep=2 pt]
\tikzstyle{bl}=[fill=blue,circle, draw=blue, inner sep=2 pt]
\draw[thick] (0,0) -- (3,0) -- (3,3) -- (0,3) -- (0,0) -- (1.6,.8) -- (4.6,.8) -- (4.6,3.8) -- (1.6,3.8) -- (1.6,.8);
\draw[thick] (3,0)  -- (4.6,.8);
\draw[thick] (3,3)  -- (4.6,3.8);
\draw[thick] (0,3)  -- (1.6,3.8);
\node (x1) at (0,0) [red] {};
\node (x2) at (3,0) [bk] {};
\node (x3) at (3,3) [bk] {};
\node (x4) at (0,3) [bk] {};
\node (y1) at (1.6,.8) [bk] {};
\node (y2) at (4.6,.8) [bk] {};
\node (y3) at (4.6,3.8) [red] {};
\node (y4) at (1.6,3.8) [bk] {};
\node (v1) at (6,2) [red] {};
\node (v2) at (9,2) [bk] {};
\node (v3) at (6,5) [bk] {};
\node (v4) at (9,5) [bk] {};
\node (w1) at (7.6,2.8) [bk] {};
\node (w2) at (10.6,2.8) [bk] {};
\node (w3) at (10.61,5.8) [red] {};
\node (w4) at (7.6,5.8) [bk] {};
\draw[thick] (v1) -- (v2) -- (v4) -- (v3) -- (v1) -- (w1) -- (w2) -- (w3) -- (w4) -- (w1);
\draw[thick] (v2)  -- (w2);
\draw[thick] (v4)  -- (w3);
\draw[thick] (v3)  -- (w4);
\draw[thick] (x1)  -- (v1);
\draw[thick] (x2)  -- (v2);
\draw[thick] (x3)  -- (v4);
\draw[thick] (x4)  -- (v3);
\draw[thick] (y1)  -- (w1);
\draw[thick] (y2)  -- (w2);
\draw[thick] (y3)  -- (w3);
\draw[thick] (y4)  -- (w4);
\end{tikzpicture}
     \caption{The design $H_2'$ on $Q_4$.  }
     \label{fig:hamming lift}
 \end{figure}

\begin{theorem}\label{Lift Hamm}
\emph{The lifted Hamming code $H_r'$ is a $2^r$-design on $Q_{2^r}$, and $H_r''$ is a $(2^r+1)$-design on $Q_{2^r+1}$. We have that $$\eff (H_r') = \frac{2^{2^r-r}}{2^{2^r}-\binom{2^r}{2^{r-1}}} \sim \frac{1}{2^r} \quad \text{and} \quad \eff (H_r'') = \frac{2^{2^r-r+1}}{2^{2^r+1}-\binom{2^r+1}{2^{r-1}}} \sim \frac{1}{2^r} .$$}
\end{theorem}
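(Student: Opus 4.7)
My plan is to derive both halves of the theorem as immediate applications of Theorem~\ref{lincode thm}, leveraging the simplex-code structure of $H_n^\perp$ to control precisely which eigenspaces are missed, and then to read off the efficacy by combining the known cardinalities with the binomial dimensions $\dim\Lambda_i = \binom{N}{i}$ on $Q_N$.

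First I would analyze the dual codes. Since $M_n$ has full row rank $n$ (its columns exhaust all nonzero vectors of $\{0,1\}^n$), appending one or two zero columns preserves row rank, so $(H_n')^\perp$ and $(H_n'')^\perp$ are exactly the row spans of $M_n'$ and $M_n''$. Appending zero coordinates does not change the weight of any row-span vector, so $(H_n')^\perp$ (resp.\ $(H_n'')^\perp$) consists of $0$ together with $2^n-1$ vectors of weight $2^{n-1}$, inherited from the simplex code $H_n^\perp$. By Theorem~\ref{lincode thm}, $H_n'$ fails to integrate only eigenvectors $\chi_a$ of $Q_{2^n}$ with $a\in(H_n')^\perp\setminus\{0\}$, all of which lie in $\Lambda_{2^{n-1}}$; similarly $H_n''$ misses only eigenvectors in the $\Lambda_{2^{n-1}}$ of $Q_{2^n+1}$.

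Next I would determine the frequency orderings. On $Q_{2^n}$ the frequency $|\lambda_i+1|=|1-i/2^{n-1}|$ is uniquely minimized at $i=2^{n-1}$, so $\Lambda_{2^{n-1}}$ is strictly the highest-frequency eigenspace; hence $H_n'$ integrates the first $2^n$ of the $2^n+1$ eigenspaces and is a $2^n$-design. On $Q_{2^n+1}$ the frequency $|1-2i/(2^n+1)|$ is simultaneously minimized at $i=2^{n-1}$ and $i=2^{n-1}+1$, so $\Lambda_{2^{n-1}}$ and $\Lambda_{2^{n-1}+1}$ are tied for last in the frequency ordering. Using the ordering that places $\Lambda_{2^{n-1}}$ last, $H_n''$ integrates the first $2^n+1$ of the $2^n+2$ eigenspaces, so it is a $(2^n+1)$-design. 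This tie-breaking step is the one subtlety I expect to need to address carefully, since $H_n''$ would only be a $2^n$-design if we were forced to exclude both tied eigenspaces, whereas the definition permits choosing any valid ordering among equal-frequency eigenspaces.

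Finally I would compute the efficacies. Since $M_n'$ and $M_n''$ have rank $n$, we get $|H_n'|=2^{2^n-n}$ and $|H_n''|=2^{2^n+1-n}$, which are the numerators. Using $\sum_{i=0}^{2^n}\binom{2^n}{i}=2^{2^n}$ and the identification of the unique missing eigenspace, the denominator of $\eff(H_n')$ is $2^{2^n}-\binom{2^n}{2^{n-1}}$; the analogous sum for $H_n''$ gives $2^{2^n+1}-\binom{2^n+1}{2^{n-1}}$. Stirling's estimate $\binom{2^n}{2^{n-1}}\sim 2^{2^n}/\sqrt{\pi\cdot 2^{n-1}}$ (and the same bound for $\binom{2^n+1}{2^{n-1}}$ up to a constant) makes the subtracted binomial negligible against $2^{2^n}$, yielding both asymptotics $\eff(H_n'),\eff(H_n'')\sim 1/2^n$, in parallel to the computation already carried out in the proof of Theorem~\ref{Main Hamm}.
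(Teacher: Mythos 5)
Your proposal is correct and follows essentially the same route as the paper: reduce to Theorem~\ref{lincode thm} by noting the row spans of $M_n'$ and $M_n''$ sit in $\Lambda_{2^{n-1}}$, check that $\Lambda_{2^{n-1}}$ is (or can be ordered) last in the frequency ordering of $Q_{2^n}$ and $Q_{2^n+1}$ respectively, and compute the efficacies as in Theorem~\ref{Main Hamm}. Your explicit treatment of the tie between $\Lambda_{2^{n-1}}$ and $\Lambda_{2^{n-1}+1}$ on $Q_{2^n+1}$ is exactly the subtlety the paper handles with the phrase ``may be ordered last by frequency.''
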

\begin{proof}
The row spans of $M_r' $ and $M_r''$ are contained in $\Lambda_{2^{r-1}}$. By Theorem \ref{lincode thm}, $H_r'$ and $H_r''$ then integrate all eigenspaces but $\Lambda_{2^{r-1}}$.  The frequency ordering of $Q_{2^r} $ is
$$\L_0 \equiv \L_{2^r}< \L_1\equiv \L_{2^r-1} < \ldots <\L_{2^{r-1}} ,$$
and the frequency ordering on the eigenspaces of $Q_{2^r+1} $ is 
$$\L_0 \equiv \L_{2^r+1}< \L_1 \equiv \L_{2^r} <\ldots < \L_{2^{r-1}} \equiv \L_{2^{r-1}+1} ,$$
  Since $\Lambda_{2^{r-1}}$ is last by frequency for $Q_{2^r}$, $H_r'$ is a $2^r$-design on $Q_{2^r}$.  Since $\Lambda_{2^{r-1}}$ may be ordered last by frequency for $Q_{2^r+1}$, $H_r''$ is a $(2^r+1)$-design on $Q_{2^r+1}$. We calculate the efficacy of these designs both exactly and asymptotically in the same manner as in Theorem \ref{Main Hamm}.
\end{proof}

We show in Theorem \ref{lifted optimal} that $H_2'$ is optimal on $Q_{4}$, but it is otherwise unknown whether lifted Hamming codes are optimal. Geometrically, the lift $H_r'$ to the $2^r$- cube is an embedding of two copies of $H_r$, one on the $x_{2^r}= 0$ facet and one on the $x_{2^r}= 1$ facet. Likewise, $H_r''$ embeds four copies of $H_r$ in the $(2^r+1)$-cube.  Unfortunately, we cannot push this pattern further without a loss of efficacy, as $\Lambda_{2^{r-1}} $  cannot be ordered last by frequency in other dimensions.  
\begin{conjecture}
\emph{The lifted Hamming codes $H_r'$ and $H_r ''$ are optimal designs on their respective graphs $Q_{2^{r}}$ and $Q_{2^{r}+1}$.}
\end{conjecture}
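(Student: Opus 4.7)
The plan is to establish the conjecture via Delsarte's linear programming bound in the Hamming association scheme, combined with a case analysis over frequency-ordered prefixes of eigenspaces. First I would translate each integration condition into the vanishing of a dual-distance coefficient: a subset $W \subseteq \{0,1\}^m$ integrates $\L_k$ of $Q_m$ if and only if $B_k(W) = 0$, where $(B_0,\ldots,B_m)$ is the MacWilliams transform of the distance distribution $(A_0,\ldots,A_m)$ of $W$. This follows because $\L_k$ is spanned by the weight-$k$ characters $\{\chi_a : wt(a) = k\}$ and $B_k(W)$ is, up to normalization, the sum of $|\chi_a^T \ones_W|^2$ over those $a$. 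The conjecture then reduces to lower-bounding
$$\min\Bigl\{\,\sum_{i=0}^m A_i : A_0 = 1,\ A_i \geq 0,\ \sum_{i=0}^m A_i K_k(i) = 0 \text{ for } k \in \mathcal{S}\,\Bigr\},$$
where $K_k$ is the Krawtchouk polynomial and $\mathcal{S}$ ranges over all integration targets arising from prefixes of the frequency ordering.

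Next I would exhibit a dual-feasible LP certificate realizing $|H_n'|$ (respectively $|H_n''|$) as the optimum for the target $\mathcal{S} = \{1,\ldots,m\} \setminus \{2^{n-1}\}$, with $m = 2^n$ or $m = 2^n+1$. A natural candidate is obtained by MacWilliams-transforming the distance enumerator of the simplex code $H_n^\perp$, mirroring the linear argument of Theorem \ref{Hamm is small} and extending it to arbitrary (possibly nonlinear) subsets. Because the frequency pairing $\L_i \equiv \L_{m-i}$ puts $\L_{2^{n-1}}$ last and that eigenspace contributes the dominant share of $\sum_i \dim \L_i$, the intuition is that integrating everything except the middle eigenspace yields the best ratio; but this must still be verified prefix by prefix against every shorter integration target.

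The main obstacle is precisely this cross-prefix comparison. Very small designs exist that integrate only a few low-frequency eigenspaces, such as those in Lemma \ref{simple cube designs}, and a priori their ratios could beat $\eff(H_n')$ since both the numerator $|W|$ and the denominator $\sum \dim \L_i$ shrink together. The delicate step is to show that the LP lower bound on $|W|$ always exceeds $\eff(H_n') \cdot \sum_{\L \in \mathcal{P}} \dim \L$ for every prefix $\mathcal{P}$. A plausible route is induction on $n$, using the decomposition of $Q_{2^n}$ as the Cartesian product of two copies of $Q_{2^{n-1}}$ together with the base case $n = 2$ from Theorem \ref{lifted optimal}. A secondary complication is LP integrality: the Delsarte bound need not be attained by an actual subset, so one must either argue tightness in this regime or accept additive slack and absorb it combinatorially. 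These two issues together are the reason this has been left as a conjecture.
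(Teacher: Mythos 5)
The statement you are addressing is stated in the paper as a conjecture: the paper offers no proof of it, and explicitly leaves optimality of $H_n'$ and $H_n''$ open beyond the single base case $n=2$ (Theorem \ref{lifted optimal}, which is settled by a brute-force search on $Q_4$). So there is no argument in the paper to compare yours against; the only question is whether your proposal closes the gap, and it does not. Your opening reduction is sound: since $\L_k$ is spanned by $\{\chi_a : wt(a)=k\}$ and $B_k(W)$ is proportional to $\sum_{wt(a)=k} |\chi_a^T\ones_W|^2$, integrating the eigenspace $\L_k$ is indeed equivalent to $B_k(W)=0$, and the Delsarte LP gives a legitimate lower bound on $|W|$ for any prescribed set $\mathcal{S}$ of vanishing dual coefficients. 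But everything after that is a plan rather than a proof, and you say so yourself.

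Concretely, three things are missing. First, the dual-feasible certificate establishing that the LP optimum for $\mathcal{S}=[m]\setminus\{2^{n-1}\}$ equals $2^{2^n-n}$ is never exhibited or verified; ``MacWilliams-transform the enumerator of the simplex code'' is a heuristic, and LP bounds for designs are frequently not tight, so it is not even clear the claimed value is the LP optimum. Second, the cross-prefix comparison is the entire content of Definition 1.2's optimality and is left open: you would need, for \emph{every} prefix $\mathcal{P}$ of the frequency ordering, a lower bound on $|U|$ for subsets $U$ integrating $\mathcal{P}$ that beats $\eff(H_n')\cdot\sum_{\L\in\mathcal{P}}\dim\L$, and the proposed induction via the Cartesian product $Q_{2^n}\cong Q_{2^{n-1}}\times Q_{2^{n-1}}$ is problematic because the eigenspaces of the product mix weight classes of the factors ($\L_k$ of the product decomposes over $i+j=k$), so a design on $Q_{2^n}$ need not restrict or project to designs on the factors. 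Third, the integrality slack you mention would have to be controlled uniformly over all prefixes, and no mechanism for ``absorbing it combinatorially'' is given. What you have written is a reasonable research program toward the conjecture, not a proof of it.
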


To extend $H_r$ as a design to cubes of lower dimension, we consider projections. Consider the check matrix $\pi(M_r)$, the matrix with all but the last column of $M_r.$  For instance, for $r=3$, this provides a code on the 6-cube with check matrix
\[\pi(M_3)= \begin{bmatrix}
1 & 0 & 1 & 0 & 1 & 0 \\
0 & 1 & 1 & 0 & 0 & 1 \\
0 & 0 & 0 & 1 & 1 & 1 
\end{bmatrix}.\]
We denote the linear code with this check matrix by $\pi(H_r)$, which we call the projected Hamming code.
\begin{theorem} \label{Hamm Proj}
 \emph{The linear code $\pi(H_r)$ is a $(2^r-3)$-design on $Q_{2^r-2}$ with }
 $$\eff(\pi(H_r))= \frac{2^{2^r-r-2}}{2^{2^r-2} - \binom{2^r-2}{2^{r-1}-1}-\binom{2^r-2}{2^{r-1}}} \sim \frac{1}{2^r}.$$  
 \end{theorem}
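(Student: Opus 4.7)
The plan is to apply Theorem \ref{lincode thm} after understanding the weight distribution of the dual code $\pi(H_n)^\perp$. Write $\pi$ for the coordinate projection $\{0,1\}^{2^n-1}\to\{0,1\}^{2^n-2}$ that forgets the last entry; by linearity, $\pi(H_n)^\perp$, which is the row span of $\pi(M_n)$, equals $\pi(H_n^\perp)$, the image of the simplex code. First I would verify that $\pi$ restricted to $H_n^\perp$ is injective: if $\pi(u)=\pi(v)$, then $u-v\in H_n^\perp$ has Hamming weight at most $1$, but every nonzero simplex codeword has weight $2^{n-1}\ge 2$. Consequently $|\pi(H_n)^\perp|=2^n$, so $\pi(H_n)$ has dimension $2^n-2-n$ and cardinality $2^{2^n-n-2}$, which will be the numerator of the efficacy.

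Next I would track the weights in $\pi(H_n)^\perp$. The last column of $M_n$ is the all-ones vector $\ones$ (the only length-$n$ bit pattern living in that column), so each row of $M_n$ has final entry $1$. Hence the linear functional ``read the last coordinate'' on $H_n^\perp$ is $\sum c_i r_i\mapsto \sum c_i\bmod 2$, which is nonzero and has a kernel of size $2^{n-1}$. Among the $2^n-1$ nonzero simplex codewords (each of weight $2^{n-1}$), exactly $2^{n-1}-1$ have last coordinate $0$ and $2^{n-1}$ have last coordinate $1$, so their projections have weights $2^{n-1}$ and $2^{n-1}-1$ respectively. Thus every nonzero element of $\pi(H_n)^\perp$ has weight in $\{2^{n-1}-1,\,2^{n-1}\}$.

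Applying Theorem \ref{lincode thm}, $\pi(H_n)$ integrates every eigenspace of $Q_{2^n-2}$ except $\Lambda_{2^{n-1}-1}$ and $\Lambda_{2^{n-1}}$. The frequency ordering on $Q_{2^n-2}$ reads
$$\Lambda_0=\Lambda_{2^n-2}<\Lambda_1=\Lambda_{2^n-3}<\cdots<\Lambda_{2^{n-1}-2}=\Lambda_{2^{n-1}}<\Lambda_{2^{n-1}-1},$$
so $\Lambda_{2^{n-1}-1}$ is uniquely last, and $\Lambda_{2^{n-1}}$ is tied with $\Lambda_{2^{n-1}-2}$ for second-last; breaking the tie in favor of $\Lambda_{2^{n-1}}$ puts both failed eigenspaces at positions $2^n-2$ and $2^n-1$ out of the $2^n-1$ total eigenspaces. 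Therefore $\pi(H_n)$ is a $(2^n-3)$-design, and using $\dim\Lambda_k=\binom{2^n-2}{k}$ together with $\sum_k\binom{2^n-2}{k}=2^{2^n-2}$ yields the stated efficacy.

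The only substantive step is the weight analysis in the second paragraph; everything else is a direct application of Theorem \ref{lincode thm} in the same spirit as the proofs of Theorems \ref{Main Hamm} and \ref{Lift Hamm}, so I do not expect a genuine obstacle.
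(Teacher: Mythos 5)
Your proposal is correct and follows essentially the same route as the paper's proof: both arguments hinge on the observation that every row of $M_n$ ends in a $1$ (the last column being $\ones$), so the simplex codeword $r_I$ has last coordinate $|I|\bmod 2$ and its projection has weight $2^{n-1}$ or $2^{n-1}-1$ accordingly, after which Theorem \ref{lincode thm} and the frequency ordering on $Q_{2^n-2}$ finish the job. Your additional injectivity check for $\pi$ on the simplex code (to pin down $|\pi(H_n)|=2^{2^n-n-2}$) is a small but welcome detail that the paper leaves implicit.
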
 
\begin{proof}
Let $\{a_1,\ldots, a_r\}$ be the rows of $M_r$, and let $$a_I = \sum_{i\in I}a_i \text{ for some nonempty }I \subseteq [r].$$ We recall that $a_I$ is an element of the simplex code, and hence has weight $2^{r-1}$. Since the last coordinate of $a_i$ is 1 for each $i=1,\ldots,r$, the last coordinate of $a_I$ is $|I| \mod 2 $.  Thus  $\pi(a_I)$ has weight $2^{r-1}$ when $|I|$ is even and weight $2^{r-1}-1$ when $|I|$ is odd. 
Therefore $\pi(H_r)$ integrates all eigenspaces except for $\L_{2^{r-1}-1}$ and $\L_{2^{r-1}}$.  The frequency ordering on $Q_{2^r-2}$ is
$$\L_0 \equiv \L_{2^r-2}< \L_1 \equiv \L_{2^r-3} < \ldots<\L_{2^{r-1}-2} \equiv \L_{2^{r-1}} <\L_{2^{n-1}-1} .$$
Thus the eigenspaces $\L_{2^{r-1}-1}$ and $\L_{2^{r-1}}$ may be ordered last by frequency.  The efficacy calculation follows since  $\dim \L_{2^{r-1}-1} =  \binom{2^r-2}{2^{r-1}-1}$ and $\dim \L_{2^{r-1}} =  \binom{2^r-2}{2^{r-1}}$. The asymptotics follow similarly to Theorem \ref{Main Hamm}.
\end{proof}

While we conjectured that $H_r$ and its lifts are optimal, we are less optimistic about $\pi(H_r)$. On $Q_6$, we have that $\eff(\pi(H_3))=8/29 \approx .276$. Lemma \ref{simple cube designs} provides a design $W$ on $Q_6$ with $\eff(W) = 4/14 \approx .286.$ As $r$ grows, we suspect there may be designs which are more effective than $\pi(H_r)$ on $Q_{2^r-2}$.

\end{subsection}

\begin{section}{Extremal Designs}
We next turn to the concept of extremal designs, as introduced in \cite{Golubev}. 
\begin{definition}[\cite{Golubev}] \label{extremal def}
  \emph{An \emph{extremal design} on a regular graph $G=(V,E)$ is a subset $W\subset V$ which integrates all but one eigenvector \textcolor{cyan}{in some eigenbasis of $L$.} }
\end{definition}
If $G=(V,E)$ is regular, $W\subset V$ is extremal if $\ones_W$ can be written as a linear combination of $\ones$ and one other eigenvector of $L$ by Lemma \ref{proof technique}. Moreover, this condition is equivalent to being extremal by the following lemma.
 
 \begin{lemma} \emph{Let $G=(V,E)$ be regular. A proper subset $W\subset V$ cannot integrate every eigenvector of $L$.}
\end{lemma}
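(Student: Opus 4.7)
The plan is to use the orthogonal eigenbasis of $AD^{-1}$ and observe that integrating every eigenvector forces $\ones_W$ to be a constant multiple of $\ones$, which is impossible for a proper nonempty subset.

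First I would fix an orthogonal eigenbasis $\{\phi_1=\ones,\phi_2,\ldots,\phi_n\}$ of $AD^{-1}$, available since $AD^{-1}$ is similar to the symmetric normalized Laplacian when $G$ is regular (indeed, for a $d$-regular graph $AD^{-1}=(1/d)A$ is itself symmetric). Suppose for contradiction that $W$ is a proper nonempty subset of $V$ integrating every eigenvector. By Lemma \ref{integrate reg means =0}, integrating a nontrivial eigenvector $\phi_j$ (for $j\geq 2$) is equivalent to $\phi_j^T\ones_W=0$.

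Next I would expand $\ones_W$ in this eigenbasis as $\ones_W=\sum_{i=1}^n c_i\phi_i$. Taking inner products with each $\phi_j$ for $j\geq 2$ and using orthogonality gives $c_j\|\phi_j\|^2 = \phi_j^T\ones_W = 0$, hence $c_j=0$ for all $j\geq 2$. Therefore $\ones_W = c_1\ones$ is constant on $V$, which forces either $W=\vn$ or $W=V$, contradicting $W\subsetneq V$ being proper and nonempty. (If one wishes to also rule out $W=\vn$, the definition of integration requires dividing by $|W|$, so $W$ must be nonempty to begin with.)

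The argument is short and the only subtlety is the existence of an orthogonal eigenbasis; I would note explicitly that regularity is what guarantees this (via $AD^{-1}=A/d$ being symmetric) and what puts $\ones$ into the spectrum. No step here looks like a serious obstacle.
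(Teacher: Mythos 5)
Your proof is correct and follows essentially the same route as the paper: expand $\ones_W$ in an orthogonal eigenbasis containing $\ones$, use orthogonality to kill all nontrivial coefficients, and conclude $\ones_W$ is a multiple of $\ones$, forcing $W=V$. The only cosmetic difference is that the paper uses an orthonormal basis and phrases the conclusion as $c_1=1$ rather than as a dichotomy between $\vn$ and $V$.
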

\begin{proof}
Let $G = (V,E)$ be a regular graph on $n$ vertices and let $\{ \ones= \phi_1, \phi_2, \ldots, \phi_n\}$ be an orthonormal basis of eigenvectors for $L$. Suppose a subset $W\subseteq V$ integrates every eigenvector in this basis, that is, $ \phi_j^T \ones_W = 0 $ for each $j=2,\ldots, n.$ We can expand $\ones_W$ in this basis: $ \ones_W = \a_1 \ones + \sum_{i=2}^n \a_i\phi_i$ for some $\a_i\in\RR$. Therefore,
$$0 = \phi_j^T \ones_W = \phi_j^T \left(  \a_1 \ones + \sum_{i=2}^n \a_i\phi_i \right) = \a_1 \phi_j^T \ones +\a_j = \a_j$$
 for each $ j = 2,\ldots, n.$ Thus $\ones_W = \a_1 \ones $ implies $\a_1 =1$ and $W= V$. 
\end{proof}

 Golubev's two main results show that subsets which meet known spectral bounds are extremal. We start with a quick positive result. 
 
\begin{corollary}
\emph{The Hamming code $H_r$ and its lifts $H_r'$ and $H_r''$ are extremal designs in $Q_{2^r-1}, Q_{2^r}$, and $Q_{2^r+1}$, respectively.} \label{extremal hamm}
\end{corollary}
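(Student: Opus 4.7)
The plan is to recycle the explicit formula for $\ones_{H_n}$ obtained inside the proof of Theorem \ref{lincode thm} and observe that, for the Hamming code and its lifts, it collapses into the span of $\ones$ together with one additional eigenvector. Recall from that proof that
$$\ones_{H_n} = \frac{1}{2^n}\sum_{I \subseteq [n]} \chi_{r_I} = \frac{1}{2^n}\ones + \frac{1}{2^n}\sum_{\emptyset \neq I \subseteq [n]} \chi_{r_I},$$
where $r_1,\ldots, r_n$ are the rows of the Hamming check matrix $M_n$. Every nonzero element of the simplex code $H_n^\perp$ has weight exactly $2^{n-1}$, so each character $\chi_{r_I}$ with nonempty $I$ lies in the single eigenspace $\Lambda_{2^{n-1}}$.

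Since any nonzero element of a single eigenspace is itself an eigenvector of $AD^{-1}$, the sum $\sum_{\emptyset \neq I} \chi_{r_I}$ collapses to a single eigenvector (and must be nonzero, since otherwise $\ones_{H_n}$ would be a scalar multiple of $\ones$, contradicting $H_n \subsetneq \{0,1\}^{2^n-1}$). Thus $\ones_{H_n}$ is a linear combination of $\ones$ and just one other eigenvector. Applying Lemma \ref{proof technique} with this two-term decomposition, $H_n$ integrates every eigenvector in a completing orthonormal eigenbasis, so only a single basis vector is missed, establishing extremality.

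For the lifts, the rows of $M_n' = [M_n\ 0]$ and $M_n'' = [M_n\ 0\ 0]$ are obtained from the $r_i$ by appending zero coordinates, which preserves Hamming weight. Consequently every nontrivial row-sum still has weight $2^{n-1}$, and the same single-eigenspace argument shows that $\ones_{H_n'}$ and $\ones_{H_n''}$ each lie in the span of $\ones$ and one additional eigenvector of $AD^{-1}$ on $Q_{2^n}$ and $Q_{2^n+1}$, respectively. Hence $H_n'$ and $H_n''$ are extremal in their ambient cubes.

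There is essentially no obstacle here once one notices that the potentially $(2^n-1)$-term expansion of $\ones_{H_n}$ in Theorem \ref{lincode thm} lives in a single eigenspace. The only conceptual point worth emphasizing is that it is precisely the equidistance property of the simplex code, which forces all the appearing frequencies to coincide, that reduces the naive decomposition to a one-eigenvector term; this is what makes the Hamming construction \emph{extremal} rather than merely \emph{highly effective} as a graphical design.
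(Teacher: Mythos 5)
Your proof is correct, but it takes a different route from the paper's. The paper disposes of this corollary in two sentences: Theorems \ref{Main Hamm} and \ref{Lift Hamm} already show these codes integrate all but one \emph{eigenspace}, and Lemma \ref{change of basis} then supplies an eigenbasis of that one eigenspace in which only a single eigenvector is missed. You instead go back to the explicit character expansion $\ones_C = \tfrac{1}{2^k}\sum_{I}\chi_{r_I}$ from the proof of Theorem \ref{lincode thm} and observe that the constant weight $2^{n-1}$ of the nonzero simplex codewords forces all the nontrivial terms into the single eigenspace $\Lambda_{2^{n-1}}$, so the sum collapses to $\ones$ plus one genuine eigenvector, and Lemma \ref{proof technique} finishes. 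Both arguments are sound, and in fact the paper itself notes right after Definition 5.1 that ``$\ones_W \in \spanset\{\ones, \phi\}$'' is exactly the extremality criterion you verify. What your version buys is constructiveness: it exhibits the offending eigenvector explicitly (up to scaling, $2^n\ones_{H_n}-\ones$) and isolates the equidistance of the simplex code as the reason the decomposition collapses; what the paper's version buys is generality and brevity, since the eigenspace-to-eigenvector reduction via Lemma \ref{change of basis} applies to any design that misses exactly one eigenspace, with no explicit decomposition needed. One small point of hygiene: your argument shows at most one basis vector is missed; to get ``exactly one'' (as the definition of extremal reads) you should also invoke the fact that a proper subset cannot integrate every eigenvector (Lemma 5.3), though this is immediate.
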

\begin{proof}
These designs integrate all but one eigenspace by Theorems \ref{Main Hamm} and \ref{Lift Hamm}. By Lemma \ref{change of basis}, there is an eigenbasis for the unintegrable eigenspace such that a design fails to integrate only one eigenvector of the eigenbasis.
\end{proof}

\textcolor{cyan}{This principal applies more generally.  A graphical design is extremal as in Definition \ref{extremal def} if and only if it integrates all but one eigenspace.}

 \subsection{Graphical Designs from the Hoffman Bound}
 
Recall that the \emph{stability number} of a graph is $\alpha(G) = \{\max |W|: W\subset V \text{ is a stable set}\}.$
Golubev's first result makes use of the Hoffman bound. Though often provided as the reference, Hoffman's bound does not appear in \cite{Hoffman}. The murky origins of the Hoffman bound are described in \cite{haemer}.
\begin{theorem}(Hoffman)  \label{Hoffman}
\emph{Let $G$ be a regular graph on $n$ vertices, and let $\l_{\min}$ be the least eigenvalue of $L$. 
Then,
$$\frac{\a(G)}{n} \leq \frac{-\l_{\min}-1}{-\l_{\min}}.$$}
\end{theorem}
\noindent
Golubev shows that stable sets which attain this bound are extremal designs.

\begin{theorem}(\cite[Theorem 2.2]{Golubev})
\emph{Let $G$ be a regular graph on $n$ vertices for which the Hoffman bound is sharp.  Let
$W \subset V$ be a stable set realizing $\alpha(G),$ i.e.
$$ \frac{|W|}{n} = \frac{\alpha(G)}{n}= \frac{-\l_{\min}-1}{-\l_{\min}}.$$ Then $W $ is an extremal design.}\label{GolHoff}
\end{theorem}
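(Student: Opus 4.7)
My plan is to run the standard Hoffman bound argument, track its equality condition, and then feed the result into Lemma~\ref{proof technique}.

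First I would set up coordinates. Since $G$ is regular, pick an orthonormal eigenbasis $\{\phi_1,\ldots,\phi_n\}$ of $AD^{-1}$ with $\phi_1 = \ones/\sqrt{n}$ and eigenvalues $1 = \lambda_1 \geq \lambda_2 \geq \cdots \geq \lambda_n$. Expand $\ones_S = \sum_{i=1}^n c_i \phi_i$. Because $S$ is a stable set, $\ones_S^T A \ones_S = 0$, and writing $A = dAD^{-1}$ gives
\[
0 = \sum_{i=1}^n \lambda_i c_i^2 = c_1^2 + \sum_{i=2}^n \lambda_i c_i^2.
\]
Two more identities are immediate: $c_1 = \phi_1^T \ones_S = |S|/\sqrt{n}$ and $\sum_{i=1}^n c_i^2 = |S|$.

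Next I would derive the Hoffman bound and its equality case. Using $\lambda_i \geq \lambda_n$ for all $i$,
\[
c_1^2 = -\sum_{i=2}^n \lambda_i c_i^2 \leq -\lambda_n \sum_{i=2}^n c_i^2 = -\lambda_n\bigl(|S| - c_1^2\bigr),
\]
which rearranges to $|S|/n \leq -\lambda_n/(1-\lambda_n)$. Under the hypothesis the inequality is an equality, which forces $(\lambda_i - \lambda_n) c_i^2 = 0$ for every $i \geq 2$. Hence $c_i = 0$ whenever $\lambda_i \neq \lambda_n$, so
\[
\ones_S = c_1 \ones/\sqrt{n} + \sum_{i : \lambda_i = \lambda_n} c_i \phi_i \in \spanset(\ones)\oplus \Lambda_n,
\]
where $\Lambda_n$ denotes the $\lambda_n$-eigenspace.

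Finally I would choose a basis of $\Lambda_n$ that makes the extremality visible. Let $\psi := \ones_S - (|S|/n)\ones$, which is a nonzero element of $\Lambda_n$ (nonzero because $S$ is a proper subset, as $\alpha(G)<n$). Extend $\psi/\|\psi\|$ to an orthonormal basis $\{\psi/\|\psi\|, \psi_2, \ldots, \psi_k\}$ of $\Lambda_n$, and combine with the previously chosen eigenvectors of the other eigenspaces to obtain an orthonormal eigenbasis of $AD^{-1}$. Every basis vector $\phi \neq \psi/\|\psi\|$ is orthogonal to $\psi$ and to $\ones$, hence orthogonal to $\ones_S$; Lemma~\ref{integrate reg means =0} (or equivalently Lemma~\ref{proof technique}, since $\ones_S$ has no component on $\phi$) then says $S$ integrates $\phi$. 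Thus $S$ integrates every vector of this eigenbasis except possibly $\psi/\|\psi\|$, so $S$ is an extremal design.

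The only delicate step is the equality analysis of the Hoffman bound; everything else is bookkeeping. No real obstacle arises because the eigenspace $\Lambda_n$ supplies exactly the flexibility (via Lemma~\ref{change of basis}) needed to concentrate all of the non-integration into a single basis vector.
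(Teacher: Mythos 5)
Your proposal is correct and follows essentially the same route the paper relies on: the statement is quoted from \cite[Theorem 2.2]{Golubev}, and the argument you give---expanding $\ones_S$ in an eigenbasis, using stability to get $\ones_S^T A \ones_S=0$, and extracting $c_i=0$ for $\l_i\neq\l_n$ from sharpness of the chain of inequalities---is exactly the equality analysis the paper reproduces in its proof of Theorem~\ref{myHoff}, with the final step being Lemma~\ref{proof technique}. Your explicit construction of the basis of $\Lambda_n$ containing $\psi/\|\psi\|$ is a slightly more careful rendering of the same conclusion, so there is nothing to correct.
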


As we will now show, extremal designs found through Golubev's methods are unlikely to perform well in the frequency ordering.  
\begin{theorem}
\emph{A stable set for which the Hoffman bound is sharp is unable to integrate the eigenspace corresponding to $\l_{\min}$.}  \label{myHoff}
\end{theorem}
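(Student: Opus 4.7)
The plan is to revisit the standard eigenvalue proof of the Hoffman bound and extract, at the equality case, an explicit element of the eigenspace $\Lambda_n$ corresponding to $\lambda_n$ that is not integrated by $S$.

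First I would expand $\ones_S$ in an orthonormal eigenbasis $\ones/\sqrt{n} = \phi_1, \phi_2, \ldots, \phi_n$ of $AD^{-1}$, writing $\ones_S = c_1 \phi_1 + \sum_{i \geq 2} c_i \phi_i$, so that $c_1 = |S|/\sqrt{n}$ and $\sum_{i\geq 2} c_i^2 = |S| - |S|^2/n$. Stability of $S$ means $\ones_S^T A \ones_S = 0$, hence $\ones_S^T A D^{-1} \ones_S = 0$, which gives
\[
\frac{|S|^2}{n} + \sum_{i \geq 2} \lambda_i c_i^2 = 0.
\]
Bounding $\lambda_i c_i^2 \geq \lambda_n c_i^2$ for $i \geq 2$ yields Hoffman's inequality, with equality if and only if $c_i = 0$ for every $i \geq 2$ with $\lambda_i \neq \lambda_n$. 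Consequently, when $|S|/n = -\lambda_n/(1-\lambda_n)$, we have the decomposition
\[
\ones_S = \frac{|S|}{n}\ones + v, \qquad v \in \Lambda_n.
\]

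Next I would check that $v \neq 0$. If $v = 0$ then $\ones_S = (|S|/n)\ones$, which, since $\ones_S$ is $\{0,1\}$-valued, forces $S = \varnothing$ or $S = V$. Neither case is compatible with $S$ realizing a nonzero Hoffman bound on a regular graph with at least one edge, so $v$ is a genuine nonzero vector in $\Lambda_n$.

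Finally, I would show $S$ fails to integrate the specific eigenvector $v \in \Lambda_n$. Using orthogonality of $v$ to $\ones$, compute
\[
v^T \ones_S = v^T\!\left(\frac{|S|}{n}\ones + v\right) = \|v\|^2 \neq 0.
\]
By Lemma \ref{integrate reg means =0}, $S$ does not integrate $v$, and hence by (the contrapositive of) Lemma \ref{change of basis}, $S$ does not integrate the eigenspace $\Lambda_n$. The only subtlety is the step $v \neq 0$; everything else is a direct reading of the tight case of the Hoffman argument.
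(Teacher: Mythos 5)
Your proposal is correct and follows essentially the same route as the paper: expand $\ones_S$ in an eigenbasis of $AD^{-1}$, use stability and the equality case of the Hoffman chain of inequalities to force $c_i=0$ whenever $\lambda_i\neq\lambda_n$ ($i\geq 2$), and conclude that $\ones_S$ is $\frac{|S|}{n}\ones$ plus a vector in $\Lambda_n$ that $S$ cannot integrate. Your write-up is in fact slightly more careful than the paper's, since you explicitly verify $v\neq 0$ and compute $v^T\ones_S=\|v\|^2$, steps the paper leaves implicit.
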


\begin{proof}
Let $G=(V,E)$ be a regular graph for which the Hoffman bound is sharp, and let $W \subset V$ be a stable set which achieves the Hoffman bound.  Let $\ones_W = \sum_{i=1}^n \a_i\phi_i$, $\a_i \in \RR$, $\phi_1 = \ones$ be a decomposition of $\ones_W$ with respect to a fixed eigenbasis of $L$. In the proof of the Hoffman bound (see \cite[Theorem 2.1]{Golubev}), there is a chain of inequalities 
\begin{align*}
0 &= \langle (L+I)\ones_W, \ones_W \rangle = \sum_{i=1}^n (\l_i+1)\a_i^2 \\
&\leq \a_1^2 + (\l_{\min} +1)\sum_{i=2}^n \a_i^2 = -\l_{\min}\frac{|W|^2}{n}+ (\l_{\min}+1)|W|.    
\end{align*} 
In order for the bound to be sharp, the proof of \cite[Theorem 2.2]{Golubev} notes that this chain of inequalities must be sharp.  If $$\sum_{i=1}^n (\l_i+1)\a_i^2 = \a_1^2 +  (\l_{\min}+1)\sum_{i=2}^n \a_i^2,$$
then for $i \geq 2$, $\a_i=0$ whenever $\l_i \neq  \l_{\min}$. 
Hence we can write $\ones_W = \ones + \b \phi$, where $L\phi =\l_{\min} \phi$ , $\b\in \RR$. Thus $W$ cannot integrate the eigenspace for $ \l_{\min}.$
 \end{proof}
\textcolor{cyan}{Without further information, one might expect the minimum eigenvalue of $L$ to be near the lower bound of $-2$, which we recall is early in the frequency ordering. } This conundrum is seen most strikingly in bipartite graphs.

\begin{example}
If $G = (U \sqcup V, E)$ is bipartite, then the eigenvalue $\l_{\min} = -2$ of $L$ is second in the frequency ordering. Thus an extremal design which achieves the Hoffman bound on a bipartite graph is at best a 1-graphical design.
  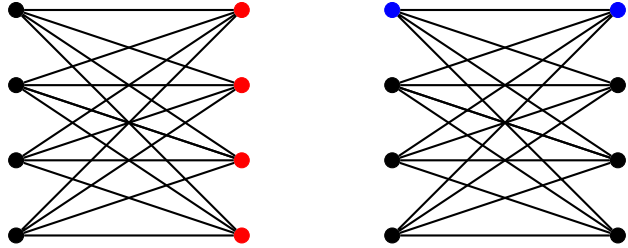
\begin{figure}[h!]
     \centering
\begin{tikzpicture}
\tikzstyle{bk}=[fill,circle, draw=black, inner sep=2 pt]
\tikzstyle{red}=[fill =red,circle, draw=red, inner sep=2 pt]
\tikzstyle{bl}=[fill=blue,circle, draw=blue, inner sep=2 pt]
\node (v1) at (0,0) [bk] {};
\node (v2) at (0,1) [bk] {};
\node (v3) at (0,2) [bk] {};
\node (v4) at (0,3) [bk] {};
\node (w1) at (3,0) [red] {};
\node (w2) at (3,1) [red] {};
\node (w3) at (3,2) [red] {};
\node (w4) at (3,3) [red] {};
\draw[thick] (v1) -- (w1) -- (v2) -- (w2) -- (v3) -- (w3) -- (v4) -- (w4);
\draw[thick] (v1) -- (w2) -- (v4) -- (w1);
\draw[thick] (w3) -- (v2) -- (w4) -- (v3) -- (w2);
\draw[thick] (v3) -- (w1);
\draw[thick] (w3) -- (v1) --(w4);
\node (v1) at (5,0) [bk] {};
\node (v2) at (5,1) [bk] {};
\node (v3) at (5,2) [bk] {};
\node (v4) at (5,3) [bl] {};
\node (w1) at (8,0) [bk] {};
\node (w2) at (8,1) [bk] {};
\node (w3) at (8,2) [bk] {};
\node (w4) at (8,3) [bl] {};
\draw[thick] (v1) -- (w1) -- (v2) -- (w2) -- (v3) -- (w3) -- (v4) -- (w4);
\draw[thick] (v1) -- (w2) -- (v4) -- (w1);
\draw[thick] (w3) -- (v2) -- (w4) -- (v3) -- (w2);
\draw[thick] (v3) -- (w1);
\draw[thick] (w3) -- (v1) --(w4);
\end{tikzpicture}
     \caption{The complete bipartite graph $K_{4,4}$. The red subset attains the Hoffman bound and so is extremal. However, it is only a 1-graphical design. The blue subset is a 2-graphical design, extremal, and optimal. }
     \label{fig:bipartite}
 \end{figure}
\end{example}

We can make use of Theorem \ref{GolHoff} to find optimal designs on Cayley graphs. Recall that a connected, undirected Cayley graph $\Gamma(H,S)$ arises from a group $H$ and symmetric generating set $S\subseteq H$, where symmetric means $S = S^{-1}:=\{s^{-1}:s\in S\}$. The vertex set of $\Gamma(H,S)$ is $H$, and $xy$ is an edge if $y = xs$ for some $s\in S$. If $H$ is an abelian group with $n$ elements, then there are $n$ group characters $\chi:H \to \CC ^*$, each of which provides an eigenvector $(\chi(h))_{h\in H}$ of $L$ with eigenvalue $(1/|S|)\sum_{s\in S} \chi(s) -1$.
Thus if $\chi$ is an eigenvector of $\Gamma(H,S)$ with eigenvalue $(1/|S|)\sum_{s\in S} \chi(s) -1 $, then $\chi$ is an eigenvector of $\Gamma(H,S')$ with eigenvalue $(1/|S'|)\sum_{s\in S'} \chi(s) -1$. Therefore an extremal design on a Cayley graph $\Gamma(H,S)$ is also extremal on  $\Gamma(H,S')$ for all other symmetric generating sets $S'$.
Theorem \ref{GolHoff} (see also Theorem \ref{GolCheeg}) gives a sufficient condition for a design to be extremal, though we have shown the frequency order of the unintegrated eigenspace may not be ideal. However,  given a different generating set, the unintegrated eigenspace may be last in the frequency ordering.

We use this idea to show that $H_2$ is optimal on $Q_3$. This is equivalent to using a sledgehammer on a thumbtack, but we think the strategy may be more generally useful.  Consider the distance graph $Q_d(N)$, with the same vertex set as $Q_d$, and edge set $E  = \{vw: 0 < d_H(v,w) \leq N\}$. Note that $Q_d(1) = Q_d$, and that $Q_d(N)$ is the Cayley graph $ \Gamma(H,S)$ with $H= \{0,1\}^d$ and $S=\{ \sum_{i\in I} e_i\}_{I\subseteq [d],|I|\leq N}$.
\begin{theorem}
 \label{H2 optimal}
The Hamming code $H_2 = \{000,111\}$ is an optimal design on $Q_3$.
\end{theorem}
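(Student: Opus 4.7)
The plan is to use the distance-graph technique previewed in the paragraph just above: construct a different Cayley generating set on $\{0,1\}^3$ under which $\Lambda_2$, the eigenspace of $Q_3$ ``skipped'' by $H_2$, becomes the least eigenspace, apply the Hoffman bound to $H_2$ in that graph, transfer the conclusion back to $Q_3$, and finish with a short case analysis on $k'$.

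I would first work with the distance graph $Q_3(2) = \Gamma(\{0,1\}^3, S)$, where $S = \{s \in \{0,1\}^3 : 1 \leq wt(s) \leq 2\}$. As a Cayley graph on the same abelian group as $Q_3$, its eigenspaces coincide with $\Lambda_0, \Lambda_1, \Lambda_2, \Lambda_3$ by the character theory from the beginning of the section. A short character-sum computation shows that the eigenvalues of $AD^{-1}$ on these four spaces are $1, 0, -1/3, 0$ respectively, so $\lambda_n = -1/3$ is realized exactly by $\Lambda_2$. The Hoffman bound then yields $\alpha(Q_3(2)) \leq 2$, and $H_2 = \{000,111\}$ is a stable set of that size in $Q_3(2)$ since $d_H(000,111) = 3 \notin \{1,2\}$. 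Applying Theorem \ref{GolHoff} shows $H_2$ is extremal on $Q_3(2)$, and Theorem \ref{myHoff} identifies $\Lambda_2$ as the unique eigenspace $H_2$ fails to integrate. Because Cayley graphs on the same group share eigenspaces, $H_2$ also integrates $\Lambda_0, \Lambda_1, \Lambda_3$ on $Q_3$. Since $\Lambda_1$ and $\Lambda_2$ are tied in the frequency ordering of $Q_3$, $\Lambda_2$ may be placed last, so $H_2$ is a $3$-design with $\eff(H_2) = 2/(1+1+3) = 2/5$.

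For optimality I would rule out every other pair $(U, k')$. For $k' = 1$ any nonempty $U$ works, giving efficacy at least $1$. For $k' = 2$ the subset must additionally integrate $\Lambda_3$, forcing $|U| \geq 2$ by Lemma \ref{cube lambda n} and efficacy at least $1$. For $k' = 4$ the lemma stating that no proper subset integrates every eigenvector forces $U = V$, and again the efficacy is $1$. The remaining case $k' = 3$ still requires integration of $\Lambda_3$, so Lemma \ref{cube lambda n} gives $|U| \geq 2$ once more, whence $\eff(U) \geq 2/5$, with equality realized by $H_2$. Taking the minimum across cases confirms $\eff(H_2) = 2/5$ is globally optimal.

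The only conceptual step is choosing the distance parameter $d = 2$ so that $\Lambda_2$ becomes last in frequency on the auxiliary Cayley graph; once that is in place, the Hoffman bound argument is automatic, and the optimality check is routine because $|V(Q_3)| = 8$ is small. I expect this is precisely the pattern motivating the author's comment that the strategy may be ``more generally useful'' beyond this sledgehammer application: the distance graph lets one promote an awkwardly placed eigenspace to the extremal position so that the Hoffman technology can recognize it.
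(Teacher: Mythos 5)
Your proposal is correct and follows essentially the same route as the paper: pass to the distance graph $Q_3(2)$, verify (as in the paper's Table~\ref{tab:dist cube spectra}) that its least $AD^{-1}$-eigenvalue $-1/3$ lives on $\Lambda_2$, apply the Hoffman bound together with Theorems~\ref{GolHoff} and~\ref{myHoff} to conclude $H_2$ is a $3$-design on $Q_3$ with efficacy $2/5$, and then rule out better subsets via Lemma~\ref{cube lambda n}. Your case analysis on $k'$ is merely a more explicit version of the paper's one-line optimality check.
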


\begin{proof}
We compute the spectral information of $L$ for $Q_3$ and $Q_3(2)$, described in Table \ref{tab:dist cube spectra}. Thus the Hoffman bound gives us that
$$ \frac{ \alpha(Q_3(2))}{ |V|}  \leq \frac{ - 1/3}{-4/3} = \frac{1}{4}.$$ The Hamming code $H_2 $ is a stable set in $Q_3(2)$ which attains the Hoffman bound, hence $H_2$ is extremal on $Q_3(2)$. By Theorem \ref{myHoff}, the eigenspace $H_2$ fails to integrate is $\L_2$, since $-4/3$ is the least eigenvalue of $L$ of $Q_3(2)$.  The eigenspace $\L_2$ can be ordered last by frequency on $Q_3$. Therefore, $\eff(H_2) = 2/5$. Since no single vertex integrates $\L_3$ by Lemma \ref{cube lambda n}, $H_2$ is then optimal on $Q_3$.
\end{proof}

\begin{table}[h!]
    \centering
    \begin{tabular}{c| c|c c}
    Eigenspace & $\dim(\L_i)$       &  $Q_3$ & $Q_3(2)$ \\ \hline
    $\L_0= \spanset\{\ones\} $      & 1 & \phantom{$-$}0 & \phantom{$-$}0 \\
    $\L_1= \spanset\{\phi_v: wt(v) =1 \} $ & 3&  $- 2/3$ & $-1$ \\
    $\L_2 = \spanset\{\phi_v: wt(v) =2 \}$& 3 & $-4/3$ & $-4/3$ \\
    $\L_3 = \spanset\{\phi_\ones \}$& 1 & $-2$ & $-1$
    \end{tabular}
    \caption{The spectra of $L$ for $Q_3$ and $Q_3(2)$.}
    \label{tab:dist cube spectra}
\end{table}

\subsection{Graphical Designs from the Cheeger Bound}

The other main result of \cite{Golubev} relies on the following variant of the Cheeger bound. \textcolor{cyan}{We use $E(W, V \setminus W)$ to denote the set of edges with one vertex in $W$ and the other in $V\setminus W$.}

\begin{theorem}(\cite{AM,Tanner})
\emph{Let $G$ be a connected $\d$-regular graph and $\l^*$ be the second largest eigenvalue of $L$. Then} 
$$ \underset{\vn \neq W \subset V}{\min}\frac{|V| |E(W, V\setminus W)|}{\d|W||V\setminus W|} \geq -\l^* .$$ 
\end{theorem}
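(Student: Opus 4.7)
The plan is to prove this by a Rayleigh quotient argument, using the indicator vector of $S$ corrected to lie in the orthogonal complement of the top eigenspace. Since $G$ is $d$-regular, $AD^{-1} = A/d$, so $I - AD^{-1} = I - A/d$ has the same eigenvectors as $AD^{-1}$ with eigenvalues $1 - \l_i$. The all-ones vector $\ones$ spans the kernel (corresponding to $\l_1 = 1$), and by the Courant--Fischer min-max principle, the Rayleigh quotient of $I - A/d$ is at least $1 - \l_2$ on the orthogonal complement of $\ones$.

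Fix a proper nonempty $S \subsetneq V$, let $n = |V|$ and $s = |S|$, and set
$$ x = \ones_S - \tfrac{s}{n}\ones. $$
First I would check $\ones^T x = s - (s/n)\cdot n = 0$, so $x$ is admissible. Then I would compute both the numerator and denominator of the Rayleigh quotient explicitly. For the numerator, use the standard identity that for a $d$-regular graph,
$$ x^T(I - A/d) x = \tfrac{1}{d}\sum_{ij \in E}(x_i - x_j)^2, $$
which follows from $x^T(dI - A)x = \sum_{ij \in E}(x_i - x_j)^2$ since each vertex contributes its degree to $x^T(dI)x$ and cross terms match. The shift by the constant $s/n$ cancels in every difference $x_i - x_j$, so $(x_i - x_j)^2 = (\ones_S(i) - \ones_S(j))^2$, which equals $1$ precisely on edges of $E(S, V \setminus S)$ and $0$ elsewhere. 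Thus
$$ x^T(I - A/d) x = \tfrac{1}{d}|E(S, V\setminus S)|. $$

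For the denominator, direct computation gives
$$ x^T x = s\bigl(1 - \tfrac{s}{n}\bigr)^2 + (n-s)\bigl(\tfrac{s}{n}\bigr)^2 = \tfrac{s(n-s)}{n} = \tfrac{|S||V \setminus S|}{|V|}. $$
Combining the two with the min-max bound $x^T(I - A/d)x \geq (1 - \l_2)\, x^T x$ yields
$$ \frac{|E(S,V\setminus S)|/d}{|S||V\setminus S|/|V|} = \frac{|V|\,|E(S,V\setminus S)|}{d|S||V\setminus S|} \geq 1 - \l_2, $$
which is the desired inequality. Taking the minimum over all proper nonempty $S$ completes the proof.

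There is no real obstacle here; this is a textbook argument. The only bookkeeping care needed is verifying the degree-weighted identity $x^T(dI - A)x = \sum_{ij \in E}(x_i - x_j)^2$ and correctly simplifying $x^T x$ after the constant shift. The orthogonality $\ones^T x = 0$ is what licenses the spectral bound, and it is precisely the role of subtracting $(s/n)\ones$.
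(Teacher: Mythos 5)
Your proof is correct and complete: the test vector $x=\ones_S-\tfrac{s}{n}\ones$ is orthogonal to $\ones$ and nonzero since $S$ is proper and nonempty, the identity $x^T(dI-A)x=\sum_{ij\in E}(x_i-x_j)^2$ and the computation $x^Tx=s(n-s)/n$ are both right, and Courant--Fischer applied to $I-A/d$ (whose second smallest eigenvalue is $1-\l_2$, with kernel spanned by $\ones$ by connectivity) gives exactly the stated inequality. The paper does not prove this theorem at all --- it is quoted with a citation to Alon--Milman and Tanner --- so there is nothing to compare against; your argument is the standard Rayleigh-quotient proof and stands on its own.
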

\noindent
In a manner similar to Theorem \ref{GolHoff}, Golubev shows that subsets which attain the Cheeger bound are extremal.

\begin{theorem}(\cite[Theorem 2.4]{Golubev})
\emph{Let $G = (V,E)$ be a $\d$-regular graph for which the Cheeger bound is sharp. Suppose
$\vn \neq W \subset V$ realizes the Cheeger bound:}
$$ \frac{|V| |E(W, V\setminus W)|}{\d|W||V\setminus W|} = -\l^*.$$ \emph{Then $W $ is an extremal design. }\label{GolCheeg}
\end{theorem}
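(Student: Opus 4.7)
The plan is to mimic the strategy used for the Hoffman-bound result (Theorem \ref{GolHoff}): extract from the equality case of the Cheeger inequality a short spectral expansion of $\ones_W$, and then invoke Lemma \ref{proof technique} together with Lemma \ref{change of basis}. Since $G$ is $d$-regular, $I-AD^{-1} = (dI-A)/d$ is symmetric with eigenvalues $1-\l_1 \leq 1-\l_2 \leq \dots \leq 1-\l_n$, and it annihilates $\ones$.

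First I would form the ``centered'' indicator $f = \ones_W - (|W|/|V|)\ones$, which is orthogonal to $\ones$. A direct calculation using $\sum_{v\in W}\deg(v) = d|W| = 2|E(W,W)|+|E(W,V\setminus W)|$ gives
\begin{align*}
\langle (I-AD^{-1})f,\,f\rangle &= \langle (I-AD^{-1})\ones_W,\,\ones_W\rangle = \frac{|E(W,V\setminus W)|}{d}, \\
\langle f,f\rangle &= \frac{|W|\,|V\setminus W|}{|V|}.
\end{align*}
The Courant--Fischer characterization applied to the restriction of $I-AD^{-1}$ to $\ones^\perp$ then yields $\langle (I-AD^{-1})f,f\rangle / \langle f,f\rangle \geq 1-\l_2$, which rearranges to the Cheeger inequality stated above.

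Next comes the key point: the Rayleigh quotient of a vector $f\in \ones^\perp$ equals the smallest eigenvalue $1-\l_2$ of $(I-AD^{-1})|_{\ones^\perp}$ if and only if $f$ lies in the eigenspace of $I-AD^{-1}$ with eigenvalue $1-\l_2$, equivalently the $\l_2$-eigenspace $\Lambda$ of $AD^{-1}$. Since $W$ realizes the Cheeger bound with equality, we conclude $f\in\Lambda$, hence
\[
\ones_W \;=\; \frac{|W|}{|V|}\ones \;+\; \phi \qquad \text{for some } \phi \in \Lambda.
\]
Writing $\phi$ in any orthonormal basis of $\Lambda$ expresses $\ones_W$ as a linear combination of $\ones$ and finitely many eigenvectors of $AD^{-1}$ lying in a single eigenspace.

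Finally, by Lemma \ref{proof technique}, $W$ integrates every eigenvector of $AD^{-1}$ outside $\Lambda$. Inside $\Lambda$, Lemma \ref{change of basis} tells us either $W$ integrates all of $\Lambda$, or we may choose an orthonormal basis of $\Lambda$ in which $W$ fails to integrate exactly one eigenvector (and if $W$ integrates all of $\Lambda$ it trivially integrates all but zero eigenvectors, hence is still extremal). Either way, $W$ integrates all but at most one eigenvector of $AD^{-1}$, so $W$ is an extremal design. The main (and only nontrivial) obstacle is the equality-case analysis of the Rayleigh quotient bound; once that is in hand, the rest is bookkeeping handled by the two lemmas just cited.
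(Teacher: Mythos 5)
Your argument is correct and follows essentially the same route as the paper: equality in the Cheeger bound forces the centered indicator $\ones_W - (|W|/|V|)\ones$ into the $\l_2$-eigenspace (the paper phrases this as sharpness of Golubev's chain of inequalities forcing $c_i=0$ whenever $\l_i\neq\l_2$, which is the same equality-case analysis you do via the Rayleigh quotient), after which Lemma \ref{proof technique} yields extremality. The only cosmetic difference is that you rederive the Cheeger inequality from Courant--Fischer rather than quoting the chain of inequalities from \cite{Golubev}, and your closing appeal to Lemma \ref{change of basis} is unnecessary since $\ones_W = c_0\ones + \phi$ already exhibits a single unintegrated eigenvector $\phi$.
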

\noindent
The proof of this result in \cite{Golubev} indicates which eigenspace cannot be integrated.

\begin{theorem}
\emph{A subset $W\subset V$ for which the Cheeger bound is sharp cannot integrate the eigenspace with eigenvalue $\l^*$. } \label{myCheeg}
\end{theorem}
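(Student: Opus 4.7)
The plan is to mirror the argument used for Theorem~\ref{myHoff}: revisit the standard proof of the Cheeger bound, extract the chain of inequalities leading to it, and observe that equality forces the eigencomponents of a natural auxiliary vector to concentrate in a single eigenspace. Concretely, I would fix an orthonormal eigenbasis $\{\ones=\phi_1,\phi_2,\ldots,\phi_n\}$ of $AD^{-1}$ with eigenvalues $1=\l_1\geq\l_2\geq\cdots\geq\l_n$, and introduce the mean-zero lift $f:=\ones_W-(|W|/|V|)\ones$. Since $f$ is orthogonal to $\ones$, it decomposes as $f=\sum_{i=2}^n c_i\phi_i$.

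The first step is to encode both sides of the Cheeger ratio spectrally. Because the constant part of $f$ cancels on each edge, one has $\sum_{xy\in E}(f(x)-f(y))^2=|E(W,V\setminus W)|$, which gives
$$\langle f,(I-AD^{-1})f\rangle \;=\; \frac{|E(W,V\setminus W)|}{d}.$$
A short direct computation yields $\|f\|^2=|W||V\setminus W|/|V|$. The Rayleigh-type inequality
$$\sum_{i=2}^n(1-\l_i)c_i^2 \;\geq\; (1-\l_2)\sum_{i=2}^n c_i^2$$
then rearranges to the Cheeger bound.

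The decisive step is to observe that the only slack in the derivation is this Rayleigh inequality, so sharpness forces $c_i=0$ whenever $\l_i\neq\l_2$. Hence $\ones_W$ lies in $\spanset\{\ones\}$ together with the $\l_2$-eigenspace. Since $W$ is a proper nonempty subset of $V$, $f\neq 0$, so at least one $c_i$ with $\l_i=\l_2$ is nonzero. The vector $\phi:=\sum_{i:\l_i=\l_2}c_i\phi_i$ is then a nonzero eigenvector in the $\l_2$-eigenspace with $\phi^T\ones_W=\sum_i c_i^2>0$. By Lemma~\ref{integrate reg means =0}, $W$ fails to integrate $\phi$, and therefore fails to integrate the entire $\l_2$-eigenspace.

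The main obstacle is bookkeeping rather than a genuine technical difficulty: one must choose the constant $|W|/|V|$ in defining $f$ so that the combinatorial quantity $|E(W,V\setminus W)|$ appears cleanly in the Rayleigh quotient, and one must verify that no additional slack hides in the transition from $\langle f,(I-AD^{-1})f\rangle$ to the edge count. Once that lift is in hand, the sharpness analysis is a direct transcription of the argument for Theorem~\ref{myHoff}.
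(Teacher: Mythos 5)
Your proposal is correct and follows essentially the same route as the paper: both identify the Rayleigh-type inequality as the only slack in the Cheeger bound derivation, so sharpness forces $c_i=0$ whenever $\l_i\neq\l_2$, leaving $\ones_W$ in the span of $\ones$ and the $\l_2$-eigenspace. Your version is somewhat more self-contained (re-deriving the chain via the mean-zero lift $f$ rather than citing it) and makes explicit the final step that $\phi^T\ones_W=\|f\|^2>0$, but the underlying argument is the same.
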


\begin{proof}
Let $G=(V,E)$ be a $\d$-regular graph, and suppose $W \subset V$ achieves the Cheeger bound. Let $\ones_W = \sum_{i=1}^n \a_i\phi_i$. $\a_i \in \RR$, $\phi_1 = \ones$ be a decomposition of $\ones_W$ with respect to a fixed eigenbasis of $L$. In the proof of the Cheeger bound (see \cite[Theorem 2.3]{Golubev}), there is a chain of inequalities which we abbreviate here:
\begin{align*}
    &\frac{1}{\d}|E(W, V\setminus W)| = \ldots 
    = \a_1( \sqrt{n}-\a_1) + \sum_{i=2}^n (\l_i+1)(-\a_i^2) \\
    &\geq \a_1( \sqrt{n}-\a_1) + (\l^*+1)\sum_{i=2}^n (-\a_i^2) = \ldots = \frac{|W|(n-|W|)}{n}(-\l^*).
    \end{align*}
In order for the bound to be sharp, the proof of \cite[Theorem 2.4]{Golubev} notes that this chain of inequalities must be sharp. Thus if $$ \sum_{i=2}^n (\l_i+1)\a_i^2
  = (\l^*+1) \sum_{i=2}^n \a_i^2, $$
  then for $i \geq 2$, $\a_i =0$ whenever $\l_i \neq \l^*$. Hence we can write $\ones_W = \ones + \b\phi$,  where $\phi$ is an eigenvector of $L$ with eigenvalue $\l^*$ and $\b \in \RR$. Therefore $W$ cannot integrate the eigenspace with eigenvalue $\l^*$.
\end{proof}

 Since $\l^*$ is the second largest eigenvalue of $L$, one might expect it to be near 0, which corresponds to an eigenspace early in the frequency ordering of $L$. Theorems \ref{myHoff} and \ref{myCheeg} may then be a way to tie expander graphs into the theory of graphical designs. If $\l_{\min}$ is near $-1$, then the Hoffman bound may work well with the frequency ordering. Similarly, if $\l^*$ is near $-1$, then the Cheeger bound may work well with the frequency ordering.
 
 \subsection{More on Cube Graphs}

We first compare several designs on $Q_d$.
   
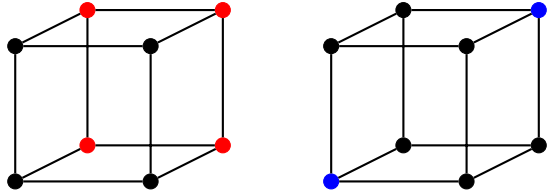
\begin{figure}[h!]
     \centering
\begin{tikzpicture}[scale = .6]
\tikzstyle{bk}=[fill,circle, draw=black, inner sep=2 pt]
\tikzstyle{red}=[fill =red,circle, draw=red, inner sep=2 pt]
\tikzstyle{bl}=[fill=blue,circle, draw=blue, inner sep=2 pt]
\node (v1) at (0,0) [bk] {};
\node (v2) at (3,0) [bk] {};
\node (v4) at (3,3) [bk] {};
\node (v3) at (0,3) [bk] {};
\node (w1) at (1.6,.8) [red] {};
\node (w2) at (4.6,.8) [red] {};
\node (w3) at (4.6,3.8) [red] {};
\node (w4) at (1.6,3.8) [red] {};
\draw[thick] (v1) -- (v2) -- (v4) -- (v3) -- (v1) -- (w1) -- (w2) -- (w3) -- (w4) -- (w1);
\draw[thick] (v2)  -- (w2);
\draw[thick] (v4)  -- (w3);
\draw[thick] (v3)  -- (w4);
\node (v1) at (7,0) [bl] {};
\node (v2) at (10,0) [bk] {};
\node (v3) at (7,3) [bk] {};
\node (v4) at (10,3) [bk] {};
\node (w1) at (8.6,.8) [bk] {};
\node (w2) at (11.6,.8) [bk] {};
\node (w3) at (11.6,3.8) [bl] {};
\node (w4) at (8.6,3.8) [bk] {};
\draw[thick] (v1) -- (v2) -- (v4) -- (v3) -- (v1) -- (w1) -- (w2) -- (w3) -- (w4) -- (w1);
\draw[thick] (v2)  -- (w2);
\draw[thick] (v4)  -- (w3);
\draw[thick] (v3)  -- (w4);
\end{tikzpicture}
     \caption{$Q_3$. The red subset, which attains the Cheeger bound, and the blue subset, $H_2$, both integrate the eigenspaces $\L_0, \L_1,$ and $\L_3$, which come first in the frequency ordering.  Since the Hamming code has fewer vertices, it is more effective. }
     \label{fig:cheeg v hamming}
 \end{figure}
Our Theorem \ref{lincode thm} can be thought of as an extension of \cite[Theorem 3.3]{Golubev}, which considers linear codes on $Q_d$ with $1 \times d$ check matrices. The most effective design on $Q_d$ by this method has efficacy $(2^{d-1})/(2^d-\binom{d}{\lceil d/2\rceil})>.5$. We recall the efficacies of $H_r$ and its lifts approach $ 1/2^r$. For a concrete example, consider the 7-cube. The most effective design from \cite[Theorem 3.3]{Golubev} will have efficacy  $64 /93 \approx .688$. We calculate $\eff(H_3) = 16/93 \approx .172$.
    
Section 3.5 of \cite{Golubev} shows that the Cheeger bound is attained for $Q_d$ by the subset $S =\{ v\in \{0,1\}^d: v_1 =1\}.$  In this graph, the eigenspace with eigenvalue $\l^* =  - 2/d$ of $L$ can at best be ordered fourth by frequency, behind $\l_1 =0, \l_d =-2$ and $\l_{d-1} = -2 + 2/d$.  Thus $S$ is a  $3$-design. Since $|S| = |V|/2$,  $\eff(S)=(2^{d-1})/(d+2)$, which grows exponentially in $d$.  \textcolor{cyan}{We also note that $Q_d$ is bipartite, where the bipartition separates even and odd weight vertices. Example 5.7 thus shows that the set of even (or odd) weight vertices is an extremal design on $Q_d$ which consist of $2^{d-1}$ vertices. }

Lastly, we show that optimal designs are a distinct concept from stable sets using $Q_4$ and the lifted Hamming code $H_2'$ as an example. Theorem \ref{GolHoff} is a link between graphical designs and maximum stable sets. The Hamming code $H_r$ is a maximum stable set in the graph $Q_{2^r-1}(2)$. In several of the striking examples shown in \cite{graphdesigns}, the graphical designs found are maximum stable sets. The proof of Theorem \ref{H2 optimal} was inspired by the work utilizing maximum stable sets in distance graphs following Delsarte's linear programming bound for codes (\cite{DelsarteThesis}).  This body of work computes upper bounds for the largest codes of a fixed distance in some setting through semidefinite programming. See \cite{VallentinSDPECC} for a more complete overview of this area. We find it natural to then wonder whether graphical designs are the same as maximum stable sets, possibly in a distance graph, or whether optimal designs are stable sets at all. The lifted Hamming code $H_r'$  as described in Section 4.4 (see Figure \ref{fig:hamming lift}) is not a stable set in $Q_{2^r}$, but $\eff(H_r')$ is small. Moreover, we have the following optimality result for $Q_4$.
\begin{theorem} \label{lifted optimal}
The lifted Hamming code $H_2'= \{ 0000,0001,1111,1110\}$ is an optimal design on $Q_4$, and no optimal design on $Q_4$ is a stable set.
\end{theorem}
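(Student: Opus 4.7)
My plan is to split the statement into two claims---optimality of $H_2'$ and failure of stability for every optimal design---and handle them in sequence.

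First I would verify the efficacy and establish optimality. By Theorem~\ref{Lift Hamm}, $H_2'$ integrates $\Lambda_0, \Lambda_1, \Lambda_3, \Lambda_4$, the four lowest-frequency eigenspaces of $Q_4$, so $\eff(H_2') = 4/(1+4+4+1) = 2/5$. To prove optimality, I would rule out any subset $U$ with $\eff(U) < 2/5$ by enumerating $k'$. The prefix sums $\sum_{i \leq k'} \dim \Lambda_i$ in the frequency ordering are $1, 2, 6, 10, 16$. For $k' \in \{1,2\}$ the ratio is at least $1$. For $k'=3$, $\Lambda_4$-integration forces $|U|$ even by Lemma~\ref{cube lambda n}, and the proof of Lemma~\ref{simple cube designs} shows the only two-vertex subsets that integrate $\Lambda_1$ are $\{x,\ones+x\}$; an analogous argument via Lemma~\ref{cube odd eigspaces} applies to $\Lambda_3$. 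Since $n=4$ is even, such pairs have equal parity and cannot integrate $\Lambda_4$, forcing $|U|\geq 4$ and ratio $\geq 2/3$. For $k'=4$ the same lower bound gives ratio $\geq 2/5$; for $k'=5$ one needs $U=V$ with ratio $1$. Hence $2/5$ is the minimum.

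Next I would characterize all optimal designs. The equation $|U|/D = 2/5$ with $D\in\{1,2,6,10,16\}$ has the unique integer solution $|U|=4$, $D=10$, so every optimal design has size $4$ and integrates exactly the first four eigenspaces. Write $U=\{a_1,a_2,b_1,b_2\}$ with even-weight $a_i$ and odd-weight $b_j$ (forced by $\Lambda_4$ and Lemma~\ref{cube lambda n}). Integration of $\Lambda_1$ yields that every coordinate $c\in[4]$ takes each value $0$ and $1$ on exactly two vertices of $U$. For $\Lambda_3$, I would exploit the identity $\chi_{e_{[4]\setminus\{c\}}}(w) = (-1)^{wt(w)}(-1)^{w_c}$, which, combined with the $\Lambda_1$ column balance, forces $(-1)^{a_{1,c}}+(-1)^{a_{2,c}}=0$ and $(-1)^{b_{1,c}}+(-1)^{b_{2,c}}=0$ in every coordinate $c$. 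Hence $a_1$ and $a_2$ differ in every coordinate, giving $a_2=\ones+a_1$, and likewise $b_2=\ones+b_1$.

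Finally, to conclude no optimal design is a stable set, observe that every optimal design takes the form $U=\{a,\ones+a,b,\ones+b\}$ with $wt(a)$ even and $wt(b)$ odd. Then $d_H(a,b)$ is odd (since $a,b$ differ in parity) and $d_H(a,\ones+b) = 4-d_H(a,b)$, so both distances lie in $\{1,3\}$; at least one equals $1$, producing an edge of $Q_4$ inside $U$. The main obstacle is the characterization step: the $\Lambda_3$ case analysis is the most delicate, but once the parity identity is invoked it reduces to linear arithmetic on $\{-2,0,2\}$.
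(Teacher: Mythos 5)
Your proof is correct, and on the second claim it takes a genuinely different route from the paper. For optimality, both arguments run the same way: bound subsets of size at most $3$ using Lemma \ref{cube lambda n} and the two-vertex characterization from Lemma \ref{simple cube designs}, then note $H_2'$ achieves $4/10$; your enumeration over $k'$ just makes explicit what the paper compresses into one sentence. The real divergence is in showing no optimal design is a stable set: the paper settles this by a brute force search that finds the $16$ optimal $4$-element subsets and checks each, whereas you derive a structural characterization --- combining the $\L_1$ coordinate-balance condition with the identity $\chi_{e_{[4]\setminus\{c\}}}(w) = (-1)^{wt(w)}(-1)^{w_c}$ for $\L_3$ to force every optimal design into the form $\{a,\ones+a,b,\ones+b\}$ with $wt(a)$ even and $wt(b)$ odd --- and then a clean parity argument ($d_H(a,b)$ odd and $d_H(a,\ones+b)=4-d_H(a,b)$, so one of the two distances is $1$) to exhibit an edge. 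Your approach buys a computation-free, human-checkable proof, and as a byproduct it recovers the paper's count: the $4\times 4$ choices of complementary even and odd pairs give exactly the $16$ optimal designs the search found. The paper's approach is shorter to state but leaves the reader to trust the computation.
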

\begin{proof}
Since $Q_4$ has an even number of vertices, Lemmas \ref{cube lambda n} and \ref{cube odd eigspaces} imply that if $W\subset Q_4$ is such that $|W| \leq 3$, then $W$ integrates at most the first two eigenspaces $\L_0$ and $\L_4$, a total of two eigenvectors.  Theorem \ref{Lift Hamm} shows that $H'_2$ integrates all but the last eigenspace in the frequency ordering.  Thus the minimal efficacy of a design on $Q_4$ is $4/10$, and $H_2'$ achieves this minimum. A brute force search finds 16 4-element subsets in $Q_4$ which are optimal, none of which are stable sets.  
\end{proof}
\end{section}

\begin{section}{Association Schemes and $t$-designs}
At first glance, there are obvious similarities between  $t$-designs on association schemes, introduced in full generality by Delsarte \cite{DelsarteThesis}, and our graphical designs.  We will show that they are not the same concepts despite these formal similarities. We will exhibit extremal and optimal graphical designs on association schemes that are not $t$-designs (Proposition \ref{johnson graph counterex} and Example \ref{optimal but not t johnson}) and also show that that even if a graphical design is a $t$-design, it may be a $k$-graphical design where $k$ and $t$ are quite different (Proposition \ref{hamm better GD than TD}).
We start with a brief introduction to association schemes. This exposition is based primarily on \cite[Chapter 21]{ECC}.

\subsection{Association Schemes}

\begin{definition}[see Chapter 21 of \cite{ECC}]
 \emph{A (symmetric) association scheme $(X, \cR)$ with $s$ classes is a finite set $X$ and $s+1$ relations $\cR = \{R_0,\ldots R_s\}$ on $X$ such that 
\begin{enumerate}
    \item $(x,y) \in R_i \iff (y,x) \in R_i$.
    \item For all $x,y\in X$, $(x,y) \in R_i$ for exactly one $i$.
    \item $R_0 = \{(x,x): x\in X\}$.
    \item If $(x,y) \in R_q$, then the number of $z\in X$ such that $(x,z)\in R_i$ and $(y,z) \in R_j$ is a constant $\a_{ijq}$ which does not depend on the choice of $x$ or $y$.
    \end{enumerate}}
\end{definition}
If $(x,y) \in R_i$, we say that $y$ is an  \emph{$i$-th associate} of $x$. It can help to visualize an association scheme as a complete graph on $X$ with labeled edges as in Figure \ref{fig: hamm scheme}.  

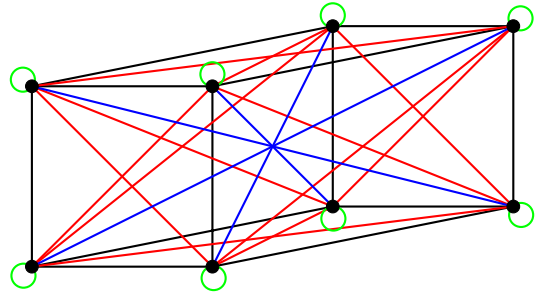
\begin{figure}[h!]
\begin{center}
\begin{tikzpicture}[scale = .8]

\draw[thick] (0,0) -- (3,0) -- (3,3) -- (0,3) -- (0,0) -- (5,1) -- (8,1) -- (8,4) -- (5,4) -- (5,1);
\draw[thick] (3,0)  -- (8,1);
\draw[thick] (3,3)  -- (8,4);
\draw[thick] (0,3)  -- (5,4);
\draw[thick,red] (3,0)  -- (8,4);
\draw[thick,red] (3,3)  -- (8,1);
\draw[thick,red] (0,0)  -- (5,4);
\draw[thick,red] (0,3)  -- (5,1);
\draw[thick,red] (0,0)  -- (3,3);
\draw[thick,red] (0,3)  -- (3,0);
\draw[thick,red] (8,1)  -- (5,4);
\draw[thick,red] (8,4)  -- (5,1);
\draw[thick,red] (0,0)  -- (8,1);
\draw[thick,red] (0,3)  -- (8,4);
\draw[thick,red] (3,0)  -- (5,1);
\draw[thick,red] (3,3)  -- (5,4);
\draw[thick,blue] (0,0)  -- (8,4);
\draw[thick,blue] (0,3)  -- (8,1);
\draw[thick,blue] (3,0)  -- (5,4);
\draw[thick,blue] (3,3)  -- (5,1);
\draw[thick,green] (-.14,-.15 ) circle (.2);
\draw[thick,green] (3.02,-.19 ) circle (.2);
\draw[thick,green] (-.15,3.11 ) circle (.2);
\draw[thick,green] (3,3.2 ) circle (.2);
\draw[thick,green] (5.01,.8 ) circle (.2);
\draw[thick,green] (8.13,.86) circle (.2);
\draw[thick,green] (5,4.18 ) circle (.2);
\draw[thick,green] (8.12,4.13 ) circle (.2);
\draw[fill=black] (0,0) circle (3pt);
\draw[fill=black] (3,0) circle (3pt);
\draw[fill=black] (0,3) circle (3pt);
\draw[fill=black] (3,3) circle (3pt);
\draw[fill=black] (5,1) circle (3pt);
\draw[fill=black] (8,1) circle (3pt);
\draw[fill=black] (5,4) circle (3pt);
\draw[fill=black] (8,4) circle (3pt);
\end{tikzpicture}
              \caption{ The Hamming association scheme on the 3-cube, where $X = \{0,1\}^3 $ and $R_i = \{(x,y): d_H(x,y) =i\}$. We indicate $R_0$ by green, $R_1$ by black, $R_2$ by red, and $R_3$ by blue. }
              \label{fig: hamm scheme}
      \end{center} 
      \end{figure}
      
      \noindent
      We can reformulate Definition 6.1 using matrices.  Let $A_i \in \RR^{X \times X}$ be the adjacency matrix of $R_i$.  That is,
      \vspace{-.1 in}
\[ (A_i)_{xy} =
\begin{cases}
1 & (x,y) \in R_i \\
0 & (x,y) \notin R_i
\end{cases}.
\]
Then conditions (1) through (4) of Definition 6.1 are
\begin{enumerate}
    \item $A_i$ is a symmetric matrix for each $i.$
    \item $\sum_{i=0}^{s} A_i$ is the all-ones matrix.
    \item $A_0 =I$.
    \item $A_iA_j = \sum_{q=0}^s \a_{ijq}A_q = A_jA_i$.
    \end{enumerate}

Consider the real vector space $\cA = \{\sum_{i=0}^s \b_i A_i : \b_i \in \RR\}$.  By (2) of Definition 6.1, the dimension of $\cA$ is $s+1$, and by (4) of Definition 6.1, $\cA$ is closed under matrix multiplication, which is commutative.  Thus $\cA$ is an associative, commutative algebra, called the \textit{Bose-Mesner algebra} of the association scheme after \cite{BMalgebras}. It can be shown that $\cA$ is semisimple, and thus has a unique basis of primitive idempotents $J_0, \ldots, J_s.$  These $J_i \in \RR^{X \times X}$ satisfy
\begin{enumerate}
    \item $J_i$ is a symmetric matrix for each $i.$
    \item $J_i^2 = J_i, \ i=0,\ldots, s.$
    \item $J_iJ_j = 0 $ for $ i \neq j$.
    \item $\sum_{i=0}^s J_i = I$. 
\end{enumerate}
We always take $J_0$ to be the all-ones matrix scaled appropriately, but in general, there is no ordering imposed on the $J_i$. We now have two distinct bases for $\cA$. Let's express one in terms of the other:
$$A_j = \sum_{i=0}^{s} p_j(i) J_i$$
for some $p_j(i) \in \RR$.  By the properties of idempotents, we then have that $$A_j J_i = p_j(i) J_i.$$ \textcolor{cyan}{Thus the eigenspace of $A_j$ with eigenvalue $p_j(i)$ contains $\col(J_i),$ the column span of $J_i$. Each eigenspace of $A_j$ consists of the collected column spans of some of the matrices $J_i$. So if $p_j(i) \neq p_j(q)$ for each $q\neq i$, then the eigenspace with eigenvalue $p_j(i)$ is exactly $\col(J_i)$}. To be clear, distinct adjacency matrices $A_j $ and $A_q$ will generally have different sets of eigenvalues $\{p_j(i)\}_{i=0}^s$ and $\{p_q(i)\}_{i=0}^s$, respectively, \textcolor{cyan}{though it can be that $p_j(i) = p_q(i)$.  However, for each $i$, $\col(J_i)$ is contained in the eigenspaces of $p_j(i)$ and $p_q(i)$.} The next lemma mirrors the discussion of Cayley graphs on a fixed group with different generating sets preceding Theorem \ref{H2 optimal}. 

\begin{lemma}
\emph{Let $(X, \cR)$ be an $s$-class association scheme,
let $I \subset [s]$, and denote $R_I = \cup_{j\in I} R_j$. \textcolor{cyan}{If $Y\subset X$ integrates $\col(J_i)$ for all but one $i\in [s]$, then $Y$ is an extremal design in the graph $G_I = (X, R_I)$. The eigenspace of $A_I = \sum_{j\in I} A_j$, the adjacency matrix of $G_I$, that $Y$ does not integrate has eigenvalue $\sum_{j\in I} p_j(i)$ \label{distgraphs}}}
\end{lemma}

\begin{proof}
\textcolor{cyan}{ Let $\phi \in \col( J_i)$, $i\in[s].$  Since $A_j \phi = p_j(i)\phi$ for each $j =0,\ldots s,$ we have that 
\begin{align*}
  A_I \phi =  \sum_{j\in I} A_j \phi =  \sum_{j\in I} p_j(i)\phi =  \left(\sum_{j\in I} p_j(i) \right)\phi.
\end{align*}
So we see that $\phi$ is an eigenvector of $A_I$ with eigenvalue $\sum_{j\in I} p_j(i)$. }

\textcolor{cyan}{ Suppose $Y$ integrates all but $\col (J_{i'})$ for some $i'\in [s]$. Then the only eigenvectors $Y$ cannot integrate lie in $\col (J_{i'})$ which is contained in the eigenspace with eigenvalue $\sum_{j\in I} p_j(i')$ for $A_I$.}
\end{proof}

Note that the above lemma was proven in terms of the adjacency matrices $A_i$, not the Laplacian. 
It is sufficient to work with the adjacency matrices in the case of regular graphs by the following lemma.

\begin{lemma}
 \emph{Let $G = (V,E)$ be a $\d$-regular graph with adjacency matrix $A$.  Then $v$ is an eigenvector of $A$ with eigenvalue $\l$ if and only if $v$ is an eigenvector of $L = AD^{-1} - I$ with eigenvalue $\l/\d -1$. }
\end{lemma}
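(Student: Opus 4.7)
The plan is to exploit the fact that $d$-regularity collapses $D$ to a scalar multiple of the identity, reducing $L$ to an affine transformation of $A$ which necessarily preserves eigenvectors while shifting and rescaling eigenvalues in a completely explicit way.

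First I would observe that if $G$ is $d$-regular then $D = dI$, and hence $D^{-1} = (1/d) I$. Substituting into the definition of the Laplacian gives
\begin{equation*}
L \;=\; AD^{-1} - I \;=\; \frac{1}{d} A - I.
\end{equation*}
Since this is a polynomial (indeed affine) expression in $A$ with scalar coefficients, it commutes with $A$ and preserves every eigenspace of $A$.

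For the forward direction, I would assume $Av = \lambda v$ and compute directly:
\begin{equation*}
L v \;=\; \frac{1}{d} A v - v \;=\; \frac{\lambda}{d} v - v \;=\; \left( \frac{\lambda}{d} - 1 \right) v,
\end{equation*}
which is exactly the claimed eigenvalue. For the converse, assume $Lv = \mu v$ with $\mu = \lambda/d - 1$. Rearranging $(1/d)Av - v = \mu v$ yields $Av = d(\mu + 1) v = \lambda v$, so $v$ is an eigenvector of $A$ with eigenvalue $\lambda$.

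There is no genuine obstacle here; the entire content of the lemma is the observation that for regular graphs the two Laplacian conventions $AD^{-1} - I$ and $A$ itself differ only by scaling and translation by the identity. The lemma is really a bookkeeping statement that lets the preceding discussion in Lemma~\ref{distgraphs}, which was phrased in terms of adjacency matrices $D_i$ of an association scheme, transfer without loss to the Laplacian-based definition of graphical designs, since all graphs arising from unions of relations $R_I$ in an association scheme are regular.
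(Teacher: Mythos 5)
Your proof is correct and follows essentially the same route as the paper: both use $d$-regularity to reduce $L$ to $\frac{1}{d}A - I$ and then read off the eigenvalue correspondence by a direct computation. The paper compresses the two directions into a single chain of equivalences, but the content is identical.
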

\begin{proof} If $G$ is $\d$-regular, then $AD^{-1} = A/\d$.  So,
\begin{align*}
A v = \l v \iff
     \frac{1}{\d}A v = \frac{\l}{\d} v \iff 
  (AD^{-1} - I) v = \left(\frac{\l}{\d}-1\right) v.
\end{align*}

\end{proof}

\subsection{$T$-designs, $t$-designs, and the Johnson scheme.}

We begin relating $T$-designs in association schemes to graphical designs in our sense. We then examine the more structured case of $t$-designs in cometric association schemes. We use the Johnson scheme as an example where the ordering of eigenspaces due to the cometric structure is incompatible with the frequency ordering.

\begin{definition}[see Theorem 3.10 of \cite{DelsarteThesis}]
 \emph{ Let $(X, \cR)$ be an association scheme with $s$ classes and $T \subseteq [s].$ A \emph{$T$-design} is  $Y \subset X$ such that $J_i \ones_Y =0$ for each $i\in T$. }\end{definition}
\textcolor{cyan}{From the definition, it is clear that $T$-designs integrate the eigenvectors in $\col(J_i)$ for $ i \in [T]$ in the sense of graphical designs.  However, without more information, there is no guarantee that any $\col(J_i)$ spans an eigenspace, and we know nothing about where any integrated eigenspaces are in the frequency ordering. }

 \begin{lemma} 
\label{TD neq GD}
 \emph{\textcolor{cyan}{Let $(X, \cR)$ be an association scheme with $s$ classes, $I \subset [s]$ be an index set, and let $G_I = (X,R_I)$ be the graph with edge set $R_I = \cup_{i\in I} R_i$.  If $Y\subset X$ is a $k$-graphical design, then $Y$ is a $T$-design for some $|T| \geq k-1$. }}
\end{lemma}

\begin{proof}
\textcolor{cyan}{Let $ \L_1 = \spanset\{\ones\} \leq \L_2 \leq \ldots \leq \L_m$ be the eigenspaces of $G_I$ ordered by frequency. Suppose $Y\subset X$ integrates $\L_1, \L_2, \ldots, \L_{k}$. For $j \in [k]$, $\L_j$ contains the column span of at least one idempotent, call it $J_{i_j}$. Let $T = \{i_j: j=2,\ldots, k\}$.  Since $J_{i_j}^T \ones_Y =0$ for each $j=2,\ldots,k,$ we have that $Y$ is a $T$-design.}
\end{proof}

If an association scheme has the property that it is \emph{cometric}, or equivalently \emph{$Q$-polynomial}, there is a natural ordering of the idempotents as $J_0, J_1,\ldots, J_s$. For definitions and more details on this matter, see Sections 2.7 and 2.8 of \cite{BCN} and Section 5.3 in \cite{DelsarteThesis}. An association scheme can have at most two idempotent orderings which are cometric, and it is fairly unusual to have more than one \cite{Suzuki}. If an association scheme is cometric, one can define the following.

\begin{definition}[See Section 5.3 in \cite{DelsarteThesis}]
 \emph{For a cometric association scheme $(X, \cR),$  $Y\subset X$ is called a $t$-design on $X$ if $Y$ is a $T$-design for $T = [t]$. }
\end{definition} 

Due to the formal similarities, it is natural to wonder whether graphical designs reduce to classical $t$-designs for graphs from cometric association schemes. To show that this is not the case, we 
introduce the Johnson scheme. Let $X$ be all $s$-element subsets of $[l]$, and call $A,B \in X$  $i$-th associates if $|A \cap B| = s-i$.  This forms a cometric association scheme with $s$ classes,  which we call the $(l,s)$ Johnson scheme.  \textcolor{cyan}{Typically, one considers  $s\in \left[ \lfloor l/2 \rfloor\right]$, as $s=0$ is the trivial graph, and the $(l,s)$ Johnson scheme is equivalent to the $(l,l-s)$ Johnson scheme by symmetry of the binomial coefficient}. See \cite[Section 4.2.1]{DelsarteThesis} for the eigenspace ordering. We can understand $t$-designs \textcolor{cyan}{on the $(l,s)$ Johnson scheme} as classical $t$-$(l,s,\gamma)$ designs.

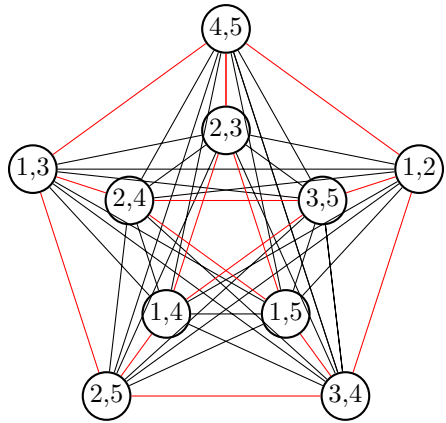
\begin{figure}[h!]
\begin{center}
    \begin{tikzpicture}[scale = .9]
        \tikzstyle{bk}=[circle, fill = white,inner sep= 1 pt,draw, thick]
\node (v1) at (3* .951, 3*.309) [bk] {1,2};
\node (v2) at (0,3) [bk] {4,5};
\node (v3) at (3* -.951, 3*.309) [bk] {1,3};
\node (v4) at (3*-.588,3*-.809)  [bk] {2,5};
\node (v5) at (3*.588,3*-.809)  [bk] {3,4};
\node (w1) at (1.5* .951, 1.5*.309) [bk] {3,5};
\node (w2) at (0,1.5) [bk] {2,3};
\node (w3) at (1.5* -.951, 1.5*.309) [bk] {2,4};
\node (w4) at (1.5*-.588,1.5*-.809)  [bk] {1,4};
\node (w5) at (1.5*.588,1.5*-.809)  [bk] {1,5};
\draw (v1) --(w2) -- (v3) -- (w4) -- (v5) --(v2) -- (w3) -- (v4) -- (w5) -- (v1) -- (w3) -- (v5) -- (v3) -- (w5) -- (w4) -- (w3) -- (w2) -- (w1) -- (w5) --(v2) ;
\draw (v1) -- (w4) -- (v2) -- (v4) -- (w1) -- (v5) -- (w2) -- (v4) -- (v1)  --(v3) -- (w1) --(v5) --(v2);
\draw (v2) -- (w1);
\draw[red] (w1) -- (w3) -- (w5) -- (v5) -- (v1) -- (w1) -- (w4) -- (v4) -- (v5);
\draw[red] (v4) -- (v3) -- (w3);
\draw[red] (v3) -- (v2) --(w2) -- (w5);
\draw[red] (v1) -- (v2) -- (w2) -- (w4);
\end{tikzpicture}
    \caption{The $(5,2)$ Johnson scheme. We indicate $R_1$ by black and $R_2$ by red. We omit the loops at each vertex from $R_0$. $G_2$ is the Petersen graph, and $G_1$ is its complement.}
    \label{fig: J(5,2)}
        \end{center}
\end{figure}

\begin{definition}[see Section 2.5 of \cite{ECC}]
 \emph{ A $t$-$(l,s,\g)$ design is a collection of $s$-element subsets of $[l]$, called \textit{blocks}, such that any subset of $t$ elements chosen from $[l]$ is in contained in exactly $\g$ blocks.}\end{definition}

\begin{example}
The projective plane $PG(2,2)$, as visualized below, forms a $2$-$(7,3,1)$ design.  The nodes are elements of $[7]$, and the lines visualize blocks of size 3.  For any two points, there is exactly one line through them. The lines are elements of the $(7,3)$ Johnson scheme.
\end{example}

\begin{center}
\begin{tikzpicture}[scale = .8]
\tikzstyle{point}=[fill=white ,circle, draw=black, thick, inner sep=2 pt]
\node (v7) at (0,0) [point] {4};
\draw  circle (1 cm);
\node (v1) at (90:2cm) [point] {1};
\node (v2) at (210:2cm) [point] {5};
\node (v4) at (330:2cm) [point] {7};
\node (v3) at (150:1cm) [point] {2};
\node (v6) at (270:1cm) [point] {6};
\node (v5) at (30:1cm) [point] {3};
\draw (v1) -- (v3) -- (v2);
\draw (v2) -- (v6) -- (v4);
\draw (v4) -- (v5) -- (v1);
\draw (v3) -- (v7) -- (v4);
\draw (v5) -- (v7) -- (v2);
\draw (v6) -- (v7) -- (v1);
\end{tikzpicture}
\end{center}

Classical $t$-$(l,s,\g)$ designs are connected to the Johnson scheme by the following.

\begin{theorem}[Theorem 4.7 of \cite{DelsarteThesis}]
\label{Johnson tdesign}
 \emph{ Let $(X, \cR)$ be the $(l,s)$ Johnson scheme.  Then $Y \subset X$ is a  $t$-$(l,s,\g)$ design for some $\g$ if and only if $Y$ is a $t$-design on $X$ \textcolor{cyan}{for the cometric ordering given in \cite[Section 4.2.1]{DelsarteThesis}.}}
 \end{theorem}

 \textcolor{cyan}{We note that for $s >2$, the ordering of the idempotents in \cite[Section 4.2.1]{DelsarteThesis} is the only cometric ordering possible.}
  We will exhibit an optimal graphical design on a graph from a cometric association scheme which is not a classical $t$-design for any $ t>0.$ The Kneser graph $KG(l,s)$ is the graph $G_s=(X,R_s)$ from the $(l,s)$ Johnson scheme, which is to say its vertices are $s$-element subsets of $[l]$, and there is an edge between subsets which do not intersect.  

\begin{proposition} \label{johnson graph counterex}
 \emph{Let $(X, \cR)$ be the $(l,s)$ Johnson scheme with $s>2$, and let $Y \subset X$ be the $s$-element subsets of $[l]$ with exactly one fixed element in common. Then $Y$ is extremal on $G_s= KG(l,s)$, but $Y$ is not a $t$-design.}
\end{proposition}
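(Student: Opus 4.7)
The plan is to verify the two claims of the proposition independently, each of which follows quickly from machinery already set up in the paper.

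For extremality, the work is essentially already done. The discussion immediately preceding the proposition records that $Y$ is a maximum stable set in $KG(n,k)$ attaining the Hoffman bound (the classical Erd\H{o}s--Ko--Rado construction, as noted in Section 3.2 of \cite{Golubev}). Since the Hoffman bound is sharp on $KG(n,k)$, Theorem \ref{GolHoff} applies directly and gives that $Y$ is an extremal design on $KG(n,k)$.

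For the second claim, the plan is to show that $Y$ fails to be a $1$-design in the sense of Definition 6.2, which \emph{a fortiori} prevents it from being a $t$-design on the Johnson scheme for any $t \geq 1$. By Definition 6.2, $Y$ is a $1$-$(n,k,\l)$ design for some $\l$ if and only if every element of $[n]$ lies in exactly $\l$ blocks of $Y$. Let $1$ denote the distinguished element common to every block of $Y$. Then $1$ appears in all $|Y| = \binom{n-1}{k-1}$ blocks of $Y$, whereas any other element $i \neq 1$ appears only in those blocks containing both $1$ and $i$, of which there are $\binom{n-2}{k-2}$. For $KG(n,k)$ to carry edges we have $n \geq 2k$, so $\binom{n-2}{k-1} > 0$, and the identity $\binom{n-1}{k-1} = \binom{n-2}{k-1} + \binom{n-2}{k-2}$ forces these two counts to differ. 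Hence $Y$ is not a $1$-design, and therefore not a $t$-design for any $t \geq 1$.

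There is no substantive obstacle: the argument decomposes into a citation of Theorem \ref{GolHoff} and a short binomial coefficient count. If one preferred the idempotent formulation from Definition 6.3, Theorem \ref{Johnson tdesign} would provide the bridge by showing the failure above is equivalent to $J_1 \ones_Y \neq 0$, but the direct combinatorial count is cleaner.
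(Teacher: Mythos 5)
Your proposal is correct, and the extremality half matches the paper exactly (the paper likewise treats it as already established by the preceding discussion and \cite[Section 3.2]{Golubev} via Theorem \ref{GolHoff}). For the non-$t$-design half you take a genuinely different, shorter route: you rule out only the case $t=1$ by counting that the distinguished element lies in $\binom{n-1}{k-1}$ blocks while every other element lies in $\binom{n-2}{k-2}$ blocks, and then appeal to the monotonicity of the design property ($t$-design $\Rightarrow$ $1$-design) to kill all larger $t$. The paper instead argues directly for every $t \in [k]$: it exhibits a $t$-set $S$ containing the common element (lying in $\binom{n-t}{k-t}$ blocks of $Y$) and a $t$-set $S'$ avoiding it (lying in $\binom{n-(t+1)}{k-(t+1)}$ blocks), so no single $\lambda$ works. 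Your argument is cleaner but leans on the standard fact that a classical $t$-design is an $s$-design for all $s \le t$; this is immediate from Delsarte's characterization (Theorem \ref{Johnson tdesign}, since $J_i\ones_Y = 0$ for $i \le t$ subsumes $i \le 1$), as you note, but it is not stated in the paper, so if you keep this route you should spell out that one line. The paper's version avoids that dependency and, as a byproduct, produces explicit witnesses for the failure at every level $t$; your version is more economical. Your inequality justification is also sound: $n \ge 2k$ gives $\binom{n-2}{k-1} > 0$, so Pascal's identity forces the two counts apart.
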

\begin{proof}
The Hoffman bound (see Theorem \ref{Hoffman}) is sharp for $KG(l,s)$. Fix an element in $[l]$, and let $Y\subset X$ be the $s$-element subsets of $[l]$ which contain that fixed element.  Section 3.2 of \cite{Golubev} shows that $Y$ is a maximum stable set in $KG(l,s)$ and hence extremal on $KG(l,s) $ by Theorem \ref{GolHoff}.

 It suffices to show that $Y$ is not a $t$-$(l,s,\g)$ design for any $t$ by Theorem \ref{Johnson tdesign}. Let the shared element among the subsets of $Y$  be $l$. That is, $A \in Y$ is of the form  $$A = \{ l\} \cup  \binom{[l-1]}{s-1}.$$
Let $t \in [s]$. Consider the $t$-element subset $S = \{ l-(t-1), \ldots,l-1, l\}$. Then $S$ is contained in $\binom{l-t}{s-t}$
$s$-element subsets, all of which contain $l$ and so are in $Y$. Thus if $Y$ is a $t$-$(l,s,\g)$ design, it must be that $\g = \binom{l-t}{s-t}.$

We will exhibit a subset of $t$ elements which is not contained in $\g= \binom{l-t}{s-t}$ elements of $Y$. Consider $S' = \{ l-t, \ldots,l-1\}.$  Since $l \notin S'$, a subset in $Y$ containing $S'$ will be of the form 
$$ \{l\} \cup S' \cup  \binom{[l-(t+1)]}{s-(t+1)}.$$
Hence there are $ \binom{l-(t+1)}{s-(t+1)} \neq \g$ subsets in $Y$ containing $S'$.
Thus for any $t \in [s]$, there is no $\g$ such that $Y$ is a $t$-$(l,s,\g)$ design. 
\end{proof}

\begin{corollary}
Let $(X, \cR)$ be the $(l,s)$ Johnson scheme with $s>2$, let $I\subset[s]$ and let $G_I=(X, R_I)$.  The subset $Y$ in Proposition \ref{johnson graph counterex} is extremal on $G_I$.
\end{corollary}

\begin{proof}
\textcolor{cyan}{The Kneser graph $KG(l,s)$ has $s+1$ distinct eigenvalues given by $$(-1)^j \binom{l-s-j}{s-j}, \ j=0,1,\ldots, s.$$ By a counting argument, it must be that the distinct eigenspaces of $KG(l,s)$ are precisely $\col(J_i)$, $i =0,1,\ldots, s$. So if $Y$ is extremal on $KG(l,s)$, then $Y$ integrates $\col(J_i)$ for all but one $i\in[s]$.
Thus by Lemma \ref{distgraphs}, $Y$ is extremal on $G_I$.}
\end{proof}

\begin{figure}[h!]
\begin{center}
    \begin{tikzpicture}[scale =.8]
\tikzstyle{wnode}=[circle, fill = white,inner sep=1pt,draw, very thick]
\tikzstyle{rnode}=[circle, fill = white,inner sep=1pt,draw =red,  thick]

\node (v2) at (120:3) [rnode] {1,2};
\node (v3) at (336:3) [rnode] {1,3};
\node (v4) at (168:3) [rnode] {1,4};
\node (v5) at (240:3) [rnode] {1,5};
\node (v6) at (24:3)  [rnode] {1,6};
\node (w3) at (48:3) [wnode] {2,3};
\node (w4) at (0:3) [wnode] {2,4};
\node (w5) at (192:3) [wnode] {2,5};
\node (w6) at (312:3) [wnode] {2,6};
\node (u4) at (216:3) [wnode] {3,4};
\node (u5) at (288:3) [wnode] {3,5};
\node (u6) at (96:3) [wnode] {3,6};
\node (x5) at (72:3) [wnode] {4,5};
\node (x6) at (264:3) [wnode] {4,6};
\node (y6) at (144:3) [wnode] {5,6};
\draw (v2) -- (u4);
\draw (v2) -- (u5);
\draw (v2) -- (u6);
\draw (v2) -- (x5);
\draw(v2) -- (x6);
\draw(v2) -- (y6);
\draw (v3) -- (w4);
\draw (v3) -- (w5);
\draw (v3) -- (w6);
\draw (v3) -- (x5);
\draw (v3) -- (x6);
\draw (v3) -- (y6);
\draw (v4) -- (w3);
\draw (v4) -- (u5);
\draw (v4) -- (u6);
\draw (v4) -- (w5);
\draw (v4) -- (w6);
\draw (v4) -- (y6);
\draw (v5) -- (w3);
\draw (v5) -- (u4);
\draw (v5) -- (u6);
\draw (v5) -- (w4);
\draw (v5) -- (w6);
\draw (v5) -- (x6);
\draw (v6) -- (w3);
\draw (v6) -- (w5);
\draw (v6) -- (x5);
\draw (v6) -- (w4);
\draw  (v6) -- (u4);
\draw  (v6) -- (u5);
\draw  (w3) -- (x5);
\draw (w3) -- (x6);
\draw (w3) -- (y6);
\draw (w4) -- (u5);
\draw (w4) -- (u6);
\draw  (w4) -- (y6);
\draw  (w5) -- (u6);
\draw (w5) -- (x6);
\draw  (w5) -- (u4);
\draw (w6) -- (x5);
\draw  (w6) -- (u5);
\draw (w6) -- (u4);
\draw (y6) -- (u4);
\draw  (u5) -- (x6);
\draw  (u6) -- (x5);
        \end{tikzpicture}
    \caption{$KG(6,2)$.  The red vertices form an extremal design, and are a $t$-design for $t=1$ on the $(6,2)$ Johnson scheme.}
    \label{fig: KG(6,2)}
        \end{center}
\end{figure}
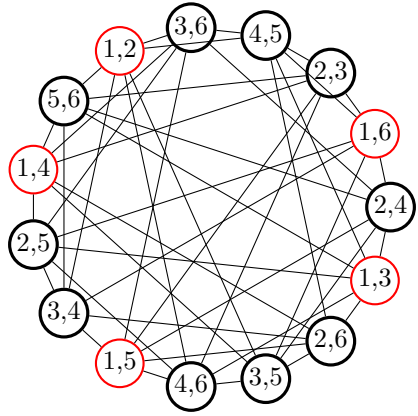

The case $s=2$ is omitted due to a technical detail which we briefly outline here. The $(l,2)$ Johnson scheme is cometric for both possible orderings of the eigenspaces: $J_0, J_1, J_2$ and $J_0, J_2, J_1$. The design $Y$ of Proposition \ref{johnson graph counterex} integrates one nontrivial eigenspace, which will come first in the $t$-design ordering for one of these cometric structures on the $(l,2)$ Johnson scheme. $Y$ is thus a $t$-design for $t=1$ in one cometric ordering. We now exhibit an optimal design in the graph $G_2$ of the $(8,3)$ Johnson scheme that is not a $t$-design.

\begin{example} \label{optimal but not t johnson}
 Let $(X,\cR)$ be the $(8,3)$ Johnson scheme, and let $Y\subset X$ be the set consisting of all vertices which contain the fixed element $\{1\}$.  Then $Y$ is a stable set in $G_3 = KG(8,3)$ which attains the Hoffman bound, as was shown in \cite{Golubev}. 
\begin{table}[h!]
    \centering
    \begin{tabular}{c|c |c c c }
    Eigenspace & $\dim(\L)$ &$G_1$ & $G_2$ & $G_3$ \\ \hline
    $\col(J_0)$ & 1 & 0 & 0 & 0 \\
    $\col(J_1)$ & 7 & $-8/15$ & $-16/15$ & $-8/5$ \\
    $\col(J_2)$ & 20 & $-14/15$ & $-7/6$ & $-7/10$ \\
    $\col(J_3)$ & 28 & $-6/5$ & $-9/10$ &  $-11/10$ 
    \end{tabular}
    \caption{The eigenvalues of $L = AD^{-1} -I$ by eigenspace for the graphs $G_1, G_2,$ and $G_3$ of the $(8,3)$ Johnson scheme.}
    \label{tab:J(8,3)spectra}
\end{table}
By Theorem \ref{myHoff}, $Y$ fails to integrate the eigenspace $\col(J_1)$, which has eigenvalue $ -16/15$ in $G_2$. The eigenspace $\col(J_1)$ is last in the frequency ordering for $G_2$. Thus $Y$ has efficacy $7/49$ on $G_2$. A brute force search shows that no subset of 6 or fewer vertices integrates $\col(J_2)$, the first nontrivial eigenspace by frequency for $G_2$. Thus $Y$ is an optimal design on $G_2$. We note that $Y$ is not a stable set in $G_2$. 
\end{example}

This indicates that the eigenspace ordering enforced by the concept of a classical $t$-design is unrelated to the frequency ordering. 

\subsection{The Hamming Scheme}

Take $X = \{0,1\}^d$ and say that $(x,y) \in R_i$ if $d_H(x,y) =i$.  Then $(X,\cR)$ forms an association scheme with $d$ classes, called the \emph{Hamming scheme} (see Figure \ref{fig: hamm scheme}).  The Hamming scheme is cometric for the ordering $\col( J_i) = \spanset\{ \phi_v : wt(v)=i\}$. Note that $G_1=Q_d$ and $G_{[N]}= Q_d(N)$. We further distinguish graphical designs from $t$-designs by comparing how the Hamming code performs in each setting. 

Like with Johnson schemes, classical $t$-designs in the Hamming scheme have been characterized in terms of a separate combinatorial object called orthogonal arrays.

\begin{definition}[see Section 11.8 of \cite{ECC}]
 \emph{Let $F$ be a set with $q$ elements. An  \emph{orthogonal array} is a $K \times d$ matrix with entries from $F$ such that any set of $i$ columns contains all possible $q^i$ row vectors exactly $\g $ times.  The array has size $K$, $d$ constraints, $q$ levels, strength $i$, and index $\g$, and is denoted $(K,d,q,i)$. An orthogonal array is \emph{linear} if $q$ is a prime power and the rows of the orthogonal array form a vector subspace of $F^d$. }
\end{definition}

\begin{example}  \emph{Consider the matrix 
\[ A = \begin{bmatrix}
1 & 1 & 1 \\
0 & 1 & 0 \\
1 & 0 & 0 \\
0 & 0 & 1 
\end{bmatrix}. \]
Here, $F= \{0,1\}$, so $q=2.$ Every choice of two columns of $A$ contains each vector in $\{0,1\}^2$ exactly once.  Thus $A$ is a $(4,3,2,2)$ orthogonal array of index 1. There is no zero row, so the rows of $A$ are not a subspace of $\{0,1\}^3$. Hence $A$ is not linear.}
\end{example}
In the Hamming scheme, $t$-designs are equivalent to orthogonal arrays.

\begin{theorem}[Theorem 4.4 of \cite{DelsarteThesis}] \label{hamm tdesigns are orthog arrays}
 \emph{$Y\subset X= \{0,1\}^d$ is a $t$-design in the Hamming scheme if and only if the vectors of $Y$ are a $(|Y|, d, 2,t)$ orthogonal array. }
\end{theorem}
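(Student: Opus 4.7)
The plan is to translate both sides of the equivalence into statements about the character sums $\chi_a^T \ones_Y = \sum_{y \in Y}(-1)^{a^T y}$ on $\{0,1\}^n$, and then show the two resulting conditions coincide.

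First I would identify the primitive idempotent $J_i$ of the Hamming scheme with the orthogonal projection onto the weight-$i$ eigenspace $\L_i = \spanset\{\chi_a : wt(a) = i\}$. Since the adjacency matrices $D_k$ are simultaneously diagonalized by the characters $\chi_a$ with eigenvalues depending only on $wt(a)$, the $n+1$ common eigenspaces of the Bose-Mesner algebra are exactly the $\L_i$, and each symmetric idempotent $J_i$ is forced to be the orthogonal projector onto $\L_i$. Hence the $t$-design condition $J_i \ones_Y = 0$ for $i = 1,\ldots,t$ is equivalent to $\chi_a^T \ones_Y = 0$ for every nonzero $a$ with $wt(a) \leq t$.

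Next I would rewrite the orthogonal array condition in the same language. For $I \subseteq [n]$ and $v \in \{0,1\}^I$, let $N_I(v) = |\{y \in Y : y|_I = v\}|$; strength $t$ means $N_I(v) = |Y|/2^{|I|}$ for every $|I| \leq t$ (the $|I| = t$ case forces smaller $|I|$ by summing over missing coordinates). Multiplying $\ones[y_i = v_i] = \tfrac{1}{2}(1+(-1)^{y_i+v_i})$ over $i \in I$ and summing over $y \in Y$ yields
\begin{equation*}
N_I(v) = \frac{1}{2^{|I|}} \sum_{S \subseteq I} (-1)^{\sum_{i \in S} v_i}\, \chi_{e_S}^T \ones_Y,
\end{equation*}
where $e_S = \sum_{i \in S} e_i$. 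For the forward direction, if $Y$ is a $t$-design and $|I| \leq t$, every nonempty $S \subseteq I$ satisfies $1 \leq wt(e_S) \leq t$, so $\chi_{e_S}^T \ones_Y = 0$ and only the $S = \vn$ term survives, giving $N_I(v) = |Y|/2^{|I|}$. For the reverse direction, given a nonzero $a$ with $wt(a) \leq t$, let $I = \operatorname{supp}(a)$, so $a^T y = \sum_{i \in I} y_i$ depends only on $y|_I$, and
\begin{equation*}
\chi_a^T \ones_Y = \sum_{v \in \{0,1\}^I} N_I(v)\,(-1)^{\sum_{i \in I} v_i} = \frac{|Y|}{2^{|I|}} \sum_{v \in \{0,1\}^I} (-1)^{\sum_{i \in I} v_i} = 0,
\end{equation*}
since the last sum factors as $\prod_{i \in I}(1+(-1)) = 0$ whenever $|I| \geq 1$.

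The main obstacle is not the Fourier computation but cleanly matching Delsarte's abstract primitive idempotents $J_i$ to the concrete weight-class projections onto $\L_i$. Once one observes that $D_1 = A$ lies in the Bose-Mesner algebra and has eigenvalues $n - 2\,wt(a)$ that separate the $\L_i$, the common eigenspaces of the algebra are forced to be the $\L_i$, and the equivalence reduces to the short boolean character calculation above.
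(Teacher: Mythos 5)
The paper does not prove this statement; it is quoted verbatim as Theorem 4.4 of Delsarte's thesis, so there is no internal proof to compare against. Your argument is a correct and self-contained proof, and it is essentially the standard (indeed Delsarte's own) character-theoretic one: translate $J_i\ones_Y=0$ into the vanishing of the Fourier coefficients $\chi_a^T\ones_Y$ for $1\le wt(a)\le t$, and translate the strength-$t$ condition into the same vanishing via the expansion of $\ones[y_i=v_i]=\tfrac12(1+(-1)^{y_i+v_i})$. The one point that genuinely requires care is the identification of Delsarte's abstract idempotents with the weight-class projectors, including the \emph{indexing}: the definition of a $t$-design depends on the chosen ordering $J_0,\ldots,J_n$, and you correctly pin this down by noting that $D_1$ has eigenvalue $n-2\,wt(a)$ on $\chi_a$, so the $n+1$ common eigenspaces of the Bose--Mesner algebra are exactly the $\L_i$ and the canonical labeling is by weight. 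Both directions of your Fourier computation check out (in the reverse direction the product $\prod_{i\in I}(1+(-1))$ vanishes precisely because $a\neq 0$ forces $|I|\ge 1$), and the reduction from strength exactly $t$ to all $|I|\le t$ by summing over extensions is fine. No gaps.
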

We can use this result to compare how the Hamming code $H_r$ performs as a classical $t$-design versus as a graphical design. We also use the following result for codes and orthogonal arrays. 
\begin{proposition}[see Theorem 10.17 of \cite{Stinson}]
 \emph{Let $C \subset \{0,1\}^d$ be a linear code of dimension $K$. Then, $\dist(C) =\rho$ if and only if $C^\perp$ is a linear $(2^K, d, 2, \rho-1)$ orthogonal array. } \label{orthog array and dual dist}
\end{proposition}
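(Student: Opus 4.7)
The plan is to reduce everything to the standard linear-algebraic fact relating the minimum distance of $C$ to the column structure of its check matrix $M$, which is simultaneously a generator matrix of $C^\perp$. Specifically, $\dist(C) = d$ if and only if every $d-1$ columns of $M$ are linearly independent over $\{0,1\}$ while some $d$ columns of $M$ are linearly dependent. This is immediate from the observation that a nonzero codeword $c \in C$ of weight $w$ is exactly a nontrivial linear dependence among the $w$ columns of $M$ indexed by the support of $c$.

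The bridge to orthogonal arrays comes from viewing $C^\perp$ as the matrix whose rows are its codewords. For any $I \subseteq [n]$ of size $k$, the coordinate projection $\pi_I \colon C^\perp \to \{0,1\}^k$ is a linear map whose image is the row span of the submatrix $M_I$ obtained by keeping only the columns of $M$ indexed by $I$. Hence $\pi_I$ is surjective if and only if those $k$ columns of $M$ are linearly independent, and in the surjective case every vector in $\{0,1\}^k$ has exactly $|C^\perp|/2^k$ preimages by linearity. This is precisely the orthogonal array condition on the columns in $I$, with uniform index $\lambda = |C^\perp|/2^k$.

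Combining these, in the forward direction I would argue: if $\dist(C) = d$, then for every $I$ with $|I| = d-1$ the columns of $M_I$ are linearly independent, so the restriction of $C^\perp$ to coordinates $I$ contains each element of $\{0,1\}^{d-1}$ equally often, certifying $C^\perp$ as a linear orthogonal array of strength $d-1$. Conversely, the orthogonal array property forces surjectivity of every $\pi_I$ with $|I| = d-1$, hence independence of every $d-1$ columns of $M$, giving $\dist(C) \geq d$; and the fact that the strength is exactly $d-1$ and not larger supplies some $d$ columns of $M$ that are linearly dependent, giving $\dist(C) \leq d$. Linearity of the array as a subset of $\{0,1\}^n$ is automatic since $C^\perp$ is itself a vector subspace.

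The only real obstacle is bookkeeping: one must interpret the ``strength $d-1$'' label carefully enough to extract both halves of the biconditional, since the equality $\dist(C) = d$ (rather than just $\dist(C) \geq d$) requires the strength to be exactly $d-1$ and not $d$, and one must also match the stated size parameter of the array against $|C^\perp|$. Once the linear-algebraic dictionary between column independence of $M$ and surjectivity of coordinate projections of $C^\perp$ is in hand, both directions are formal and no additional calculation is needed.
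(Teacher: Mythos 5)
Your proof is correct, and it is worth noting that the paper itself offers no proof of this proposition at all: it is stated as a citation to Theorem 10.17 of Stinson's book and used as a black box in Proposition \ref{hamm better GD than TD}. So you have supplied a self-contained argument where the paper defers to the literature. Your route is the standard one and it works: the dictionary between nonzero codewords of $C$ of weight $w$ and dependences among $w$ columns of the check matrix $M$, combined with the observation that $M$ generates $C^\perp$, so that the restriction of $C^\perp$ to a coordinate set $I$ is the row span of $M_I$ and is therefore either all of $\{0,1\}^{|I|}$ (with uniform fibers of size $|C^\perp|/2^{|I|}$, by linearity) or a proper subspace (in which case the balance condition fails outright). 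Your handling of the two delicate points is also right: the biconditional needs ``strength exactly $d-1$'' rather than ``strength at least $d-1$,'' since strength $k$ implies strength $k-1$, and reading the statement as exact strength is what lets you extract $\dist(C)\leq d$ from the failure of balance on some $d$ columns, which together with independence of all $(d-1)$-subsets pins the minimum weight at exactly $d$. One further point you correctly sense but could state outright: the size parameter in the proposition as printed, $2^m$ with $m=\dim(C)$, does not match the number of rows of the array $C^\perp$, which is $|C^\perp|=2^{n-m}$; your computation of the index $\lambda=|C^\perp|/2^{d-1}$ uses the correct count, and the paper's own application (where $C=H_r^\perp$ has dimension $r$ in ambient dimension $2^r-1$ and the resulting array has size $2^{2^r-r-1}$) confirms that $2^{n-m}$ is the intended parameter.
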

The  Hamming code integrates nearly twice as many eigenspaces in the graphical design ordering than in the ordering from classical $t$-designs. 
\begin{proposition} \label{hamm better GD than TD}
 \emph{The Hamming code $H_r$ is a 
 $(2^r-1)$-graphical design on $Q_{2^r-1}$, but only a 
classical $t$-design for $t = 2^{r-1}-1$.} 
\end{proposition}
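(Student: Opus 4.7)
My plan is to handle the two halves of the statement separately. The graphical design claim is already established: Theorem \ref{Main Hamm} shows that $H_r$ integrates every eigenspace of $Q_{2^r-1}$ except $\L_{2^{r-1}}$, and the frequency ordering places $\L_{2^{r-1}}$ last, so $H_r$ is a $(2^r-1)$-graphical design. Nothing new is needed here; this half is just a reminder.

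For the classical $t$-design half, I would translate everything into the orthogonal array language. By Theorem \ref{hamm tdesigns are orthog arrays}, $H_r$ is a classical $t$-design in the Hamming scheme on $\{0,1\}^{2^r-1}$ if and only if its codewords form a $(|H_r|,\, 2^r-1,\, 2,\, t)$ orthogonal array. Thus the maximal $t$ I seek is precisely the maximal strength of $H_r$ viewed as a binary OA.

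To compute that strength, I would apply Proposition \ref{orthog array and dual dist} with $C = H_r^\perp$, the simplex code. The paper already records that $H_r^\perp$ has dimension $r$ and that every nonzero vector of $H_r^\perp$ has weight $2^{r-1}$, so $\dist(H_r^\perp) = 2^{r-1}$. Proposition \ref{orthog array and dual dist} then identifies $(H_r^\perp)^\perp = H_r$ as a linear binary orthogonal array of strength exactly $\dist(H_r^\perp) - 1 = 2^{r-1}-1$. The ``exactly'' is important: the proposition is stated as a biconditional, so $H_r$ fails to be an OA of strength $2^{r-1}$, and consequently fails to be a classical $2^{r-1}$-design. Combining with the first paragraph, $H_r$ is a classical $t$-design precisely for $t = 2^{r-1}-1$, while it is a graphical $(2^r-1)$-design, which is roughly twice as large.

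The main obstacle is really just bookkeeping: making sure the OA strength obtained from the simplex code's minimum distance is matched to the correct direction of Theorem \ref{hamm tdesigns are orthog arrays}, and that the strength is read as a tight value rather than just a lower bound. Once both results are invoked in the right direction, the comparison $2^r - 1$ versus $2^{r-1}-1$ is immediate and gives the ``nearly twice as many'' quantitative claim.
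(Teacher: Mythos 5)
Your proposal is correct and follows essentially the same route as the paper: the graphical-design half is quoted from Theorem \ref{Main Hamm}, and the $t$-design half combines $\dist(H_r^\perp)=2^{r-1}$ with Proposition \ref{orthog array and dual dist} and Theorem \ref{hamm tdesigns are orthog arrays}. You are in fact slightly more explicit than the paper in using the biconditional to justify that the strength (hence $t$) is exactly $2^{r-1}-1$ and not larger, which is a welcome clarification of the word ``only.''
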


\begin{proof}
See Theorem \ref{Main Hamm} for the result that $H_r$ is a  $(2^r-1)$-graphical design on $Q_{2^r-1}$. Recall that $H_r^\perp$ is the simplex code, and $\dist(H_r^\perp) = 2^{r-1}$.  Thus $H_r$ forms a $(2^{2^r-r-1}, 2^r-1, 2, 2^{r-1}-1)$ orthogonal array by Proposition \ref{orthog array and dual dist}.  Hence $H_r$ is a $t$-design where $t = 2^{r-1}-1$ by Theorem  \ref{hamm tdesigns are orthog arrays}.  \end{proof}

\end{section}

\section*{Acknowledgements} The author would like to thank Rekha Thomas and Stefan Steinerberger for their guidance and help preparing this manuscript, Shahar Kovalsky for sharing code that assisted in computations, and Ferdinand Ihringer for noting some missing details in Section 6. Figures 1 and 3A-C are due to Stefan Steinerberger.
Figure 3D is used with permission, copyright ©2015, PRISM Climate Group, Oregon State University, http://prism.oregonstate.edu/normals/; retrieved 3 Dec 2020.
Computations were done in \cite{MATLAB:2020}.

\printbibliography
\end{document}